\documentclass[]{article}
\usepackage{songmei}
\usepackage[toc,page]{appendix}
\usepackage{subfigure}
\usepackage{csquotes}
\usepackage{mathtools}
\usepackage[utf8]{inputenc} 
\mathtoolsset{showonlyrefs}

\def\op{{\rm op}}

\def\bR{{\boldsymbol R}}
\def\Id{{\mathbf I}}

\def\bX{{\boldsymbol X}}

\def\cL{{\mathcal L}}
\def\sb{{\mathsf b}}
\def\sx{{\mathsf x}}
\def\bGamma{\boldsymbol{\Gamma}}

\def\de{{\rm d}}
\def\bx{{\boldsymbol x}}
\def\by{{\boldsymbol y}}
\def\bW{{\boldsymbol W}}
\def\ba{{\boldsymbol a}}
\def\cF{{\mathcal F}}
\def\Unif{{\sf Unif}}
\def\normal{{\sf N}}

\def\bv{{\boldsymbol V}}
\def\bM{{\boldsymbol M}}

\def\bT{\boldsymbol{T}}
\def\bS{\boldsymbol{S}}

\def\bz{{\boldsymbol z}}
\def\proj{{\mathsf P}}

\def\bB{{\boldsymbol B}}

\def\be{{\boldsymbol e}}
\def\bu{{\boldsymbol u}}
\def\bg{{\boldsymbol g}}

\def\bA{{\boldsymbol A}}

\def\bv{{\boldsymbol v}}
\def\bxi{{\boldsymbol \xi}}

\def\reals{{\mathbb R}}

\def\de{{\rm d}}
\def\bx{{\boldsymbol x}}
\def\by{{\boldsymbol y}}
\def\bW{{\boldsymbol W}}
\def\bG{{\boldsymbol G}}
\def\ba{{\boldsymbol a}}
\def\cF{{\mathcal F}}
\def\Unif{{\rm Unif}}

\def\bv{{\boldsymbol v}}
\def\bz{{\boldsymbol z}}
\def\proj{{\mathsf P}}

\def\normal{{\sf N}}

\def\be{{\boldsymbol e}}
\def\bu{{\boldsymbol u}}
\def\bg{{\boldsymbol g}}

\def\bA{{\boldsymbol A}}

\def\cS{{\mathcal S}}

\def\bY{{\boldsymbol Y}}

\def\GOE{{\rm GOE}}

\def\bbm{{\boldsymbol m}}
\def\sh{{\mathsf h}}

\def\cS{{\mathcal S}}

\def\bWtilde{\widetilde{\boldsymbol{W}}{}}
\def\bK{\boldsymbol{K}}

\newcommand{\indic}[1]{\mathbf{1}_{#1}}

\def\muhat{\widehat{\mu}{}}

\def\sB{\mathsf{B}}

\def\pconv{\stackrel{\mathrm{p}}\rightarrow}
\def\Wconv{\stackrel{W_2}\rightarrow}

\def\AMP{\mathsf{AMP}}
\def\SF{\mathsf{SF}}

\def\bmhat{\widehat{\boldsymbol{m}}{}}

\def\bWtilde{\widetilde{\boldsymbol{W}}{}}

\def\sF{\mathsf{F}}

\def\brevebm{\breve{\boldsymbol{m}}{}}
\def\brevebM{\breve{\boldsymbol{M}}{}}
\def\brevebg{\breve{\boldsymbol{g}}{}}
\def\brevebG{\breve{\boldsymbol{G}}{}}

\def\sL{\mathsf{L}}

\def\sbhat{\widehat{\mathsf{b}}{}}

\DeclareMathOperator*{\plim}{p-lim}
\DeclareMathOperator*{\plimsup}{p-lim\,sup}
\DeclareMathOperator*{\pliminf}{p-lim\,inf}

\def\alg{\mathrm{alg}}
\def\dist{\mathsf{dist}}

\def\Law{\mathsf{Law}}
\def\FMM{\mathsf{FMM}}
\def\AMS{\mathsf{AMS}}
\def\SE{\mathsf{SE}}

\def\iid{\stackrel{\mathrm{iid}}\sim}

\usepackage{hyperref}
\hypersetup{
    colorlinks,
    linkcolor={blue!80!black},
    citecolor={green!50!black},
}
\colorlet{linkequation}{blue}

\begin{document}

\title{Sudakov-Fernique post-AMP,\\ and a new proof of the local convexity of the TAP free energy}
\author{Michael Celentano\thanks{Department of Statistics, University of California, Berkeley.
E-mail: \texttt{mcelentano@berkeley.edu}}}
\maketitle

\begin{abstract}
    In many problems in modern statistics and machine learning,
    it is often of interest to establish that a first order method on a non-convex risk function eventually enters a region of parameter space in which the risk is locally convex.
    We derive an asymptotic comparison inequality, which we call the \emph{Sudakov-Fernique post-AMP inequality}, which, in a certain class of problems involving a $\GOE$ matrix,
    is able to probe properties of an optimization landscape locally around the iterates of an approximate message passing (AMP) algorithm.
    As an example of its use,
    we provide a new, and arguably simpler, proof of some of the results of \cite{celentanoFanMei2021},
    which establishes that the so-called TAP free energy in the $\Z_2$-synchronization problem is locally convex in the region to which AMP converges.
    We further prove a conjecture of \cite{AlaouiMontanariSelke2022} involving the local convexity of a related but distinct TAP free energy,
    which, as a consequence, confirms that their algorithm efficiently samples from the Sherrington-Kirkpatrick Gibbs measure throughout the ``easy'' regime.
\end{abstract}

\section{Introduction}

Many problems in modern statistics and machine-learning involve optimizing a non-convex loss function.
In many instances, 
first-order methods like gradient descent, stochastic gradient descent, 
approximate message passing, and related algorithms have been observed, both empirically and theoretically, to perform well even when the objective is highly non-convex.
Examples include phase retrieval \cite{candesLi2015,chenCandes2015,Sanghavi2017,Chen2019,caiLi2016}, 
matrix completion,
blind deconvolution \cite{Ma2020},
neural networks \cite{veiga2022,goldt2019,sarao2020}, spiked matrix and tensor estimation \cite{Sarao_Mannelli_2020,manelliKrzakala2019,manelliBiroli2019,manelliBiroli2020}, Gaussian mixture classification \cite{mignacco2020}, among others.
In all of these examples, there are regimes in which first-order methods successfully find points with near-minimal objective value or learn parameters with good statistical performance.
Substantial research effort has been dedicated--- and continues to be dedicated ---to understanding what drives the success or failure of these methods in non-convex settings. 

In some instances, the success of first-order methods can be explained by the following two-stage phenomenon \cite{Chen2019,celentanoFanMei2021}.
Upon initialization,
the first-order method is in a region of parameter space where the landscape is non-convex.
In the first stage,
the algorithm makes progress towards the global optimum or builds correlation with the unknown signal,
despite the non-convexity of the surrounding landscape.
Often, 
the analysis in this stage is primarily statistical: 
in each step, the randomness in the data generating distribution is used to argue that, with high-probability, the algorithm continues to make progress towards the global optimum or the hidden signal. 
This statistical analysis typically fails after sufficiently many iterations because--- without resampling ---the iterates become correlated with the randomness in the data in a complex way which is difficult to control. 
It is important, then, that the algorithm eventually enters a second stage, in which it arrives in a region of parameter space which is locally convex.
Now,
the analysis becomes geometric:
the algorithm makes progress towards a local optimum based on standard results from convex optimization theory.
Because the statistical analysis in the first stage fails after sufficiently many iterations,
it is important that the argument in the second stage is geometric rather than statistical.
Once the local geometry of the landscape is established,
the analysis becomes worst-case and deterministic. 

In this paper, 
we present a comparison inequality--- \emph{the Sudakov-Fernique post-AMP inequality} ---which in a certain class of problems is able to probe properties of an optimization landscape locally around the iterates of an approximate message passing algorithm.
Thus, it is well suited to establishing the two-stage phenomenon described in the previous paragraph when AMP is used in the first stage.
Recent papers \cite{celentanoMontanari2020,montanariWu2022,celentanoCheng2021} have shown that a large class of first-order methods,
including gradient descent, accelerated gradient descent, and related algorithms on appropriate optimization objectives,
are equivalent to AMP algorithms after a change of variables.
Thus, the Sudakov-Fernique post-AMP inequality can also probe the local landscape surrounding the iterates of such methods.

As an example, 
we consider in detail the problem of $\Z_2$-synchronization and the corresponding optimization problem based on the TAP free energy.
This model and optimization problem are defined in Section \ref{sec:Z2-intro}. 
We demonstrate how to use the Sudakov-Fernique post-AMP inequality to probe the landscape of the TAP free energy locally around iterates of an AMP algorithm tailored to this model.
This provides a new--- and arguably simpler ---proof of one of the main results in the paper \cite{celentanoFanMei2021},
namely, that AMP eventually enters a region of strong convexity around a stationary point of the objective. 
More recently, 
the paper \cite{AlaouiMontanariSelke2022} considered the problem of efficiently sampling from the Sherrington-Kirkpatrick Hamiltonian,
which we also define in Section \ref{sec:Z2-intro}.
One part of their proof requires considering a different but closely related version of the $\Z_2$-synchronization problem and TAP free energy.
In order to establish the success of their algorithm,
they require establishing the convergence of AMP to a region of this TAP free energy which is strongly convex. 
Our proof of the local convexity of the TAP free energy considered by \cite{celentanoFanMei2021} is easily modified to arrive at the same result for the TAP free energy considered by \cite{AlaouiMontanariSelke2022}.
This confirms that the algorithm of \cite{AlaouiMontanariSelke2022}
successfully samples from the Sherrington-Kirkpatrick Hamiltonian throughout the computationally easy regime,
resolving the conjecture of computational tractability.

The remainder of the paper is organized as follows. 
In Section \ref{sec:Z2-intro}, 
we provide the necessary background on the $\Z_2$-synchronization problem and its corresponding approximate message passing algorithm and TAP free energy.
In Section \ref{sec:overview},
we provide an overview of our results.
In Section \ref{sec:lit},
we briefly review related literature.
The Sudakov-Fernique inequality is presented in Section \ref{sec:SF-post-AMP},
and our results on the local convexity of TAP free energy in $\Z_2$-synchronization are stated in Section \ref{sec:main}.
Finally, our proof of the local convexity of the TAP free energy is presented in Section \ref{sec:proofs}, which includes the application of the Sudakov-Fernique post-AMP inequality.
Some technical details are deferred to appendices.

\subsection{Background on $\Z_2$-synchronization and the Sherrington-Kirkpatrick model}
\label{sec:Z2-intro}

In the $\Z_2$-synchronization problem,
we observe the matrix 
\begin{equation}
\label{eq:obs}
    \bY = \frac{\lambda}{n} \bx \bx^\top + \bW  \in \reals^n,
\end{equation}
where $\bW \sim \GOE(n)$, and 
$x_i \iid \mathsf{Unif}\{-1,+1\}$ are unobserved and $\lambda$ is known.
The goal is to estimate the unobserved vector $\bx$ with respect to $\ell_2$-loss.
The observations Eq.~\eqref{eq:obs} only identify $\bx$ up to sign:
the posterior $\P(\bx \in \cdot \mid \bY)$ is 
equal to the the posterior $\P(-\bx \in \cdot \mid \bY)$.
Thus, we seek an estimator $\bmhat = \bmhat(\bY)$ for which $\E[\min\{\| \bmhat \pm \bx \|_2^2\}]/n$ is small.

Several aspects of this problem are now well-understood from both an information-theoretic and computational point of view.
The paper \cite{deshpande2016} studied the asymptotic Bayes risk in this model,
and established that for $\lambda < 1$, 
it is impossible to do better asymptotically (as $n \rightarrow \infty$) than the trivial estimator $\bmhat(\bY) = \bzero = \E[\bx]$.
For $\lambda > 1$, 
weak recovery becomes information-theoretically possible:
the Bayes estimator $\bmhat_{\mathrm{Bayes}}$ achieves a non-trivial correlation with the signal,
and $\E[\min\{\| \bmhat_{\mathrm{Bayes}} \pm \bx \|_2^2\}]/n$ converges to a constant in $(0,1)$ which \cite{deshpande2016} characterize via the solution to a non-linear fixed-point equation.
Moreover, for all $\lambda > 1$, \cite{deshpande2016} proposed a practical algorithm--- a so-called Approximate Message Passing (AMP) algorithm ---which they proved achieves the Bayes risk up to asymptotically negligible terms provided it was initialized appropriately.
Although they conjectured that their results would hold when using the leading eigenvector of $\bY$ as an initialization,
for technical reasons they only considered initializations based on independent side information.
For example,
their results handle initializations of the form
\begin{equation}
\label{eq:init}
    \by = \gamma_0 \bx + \sqrt{\gamma_0} \bg,
\end{equation}
for some $\gamma_0 > 0$ arbitrarily small but non-vanishing asymptotically, 
where $\bg \sim \normal(0,\id_n)$.
Explicitly, their algorithm takes the form
\begin{equation}\label{eq:AMP-FMM}
\begin{aligned}
    \bz^0 &= \by,
    \qquad\qquad\qquad&
    \bbm^s &= \tanh(\bz^s),
    \\
    \bz^{s+1} &= \lambda \bY \bbm^s - \lambda \sbhat_s\bbm^{s-1},
    \qquad\qquad\qquad
    &
    \sbhat_s &= \lambda(1 - Q(\bbm^s)),
\end{aligned}
\end{equation}
where $\bbm \in [-1,1]^n$, $Q(\bbm) := \| \bbm \|_2^2 / n$,
and $\tanh(\,\cdot\,)$ is applied coordinate-wise.
They show that 
\begin{equation}
    \lim_{s \rightarrow \infty} \lim_{n \rightarrow \infty} 
        \frac1n\E[\| \bmhat^s - \bx \|_2^2]
        =
        \lim_{n \rightarrow \infty} 
        \frac1n\E[\min\{\| \bmhat_{\mathrm{Bayes}} \pm \bx \|_2^2\}],
\end{equation}
where the limits in $n$ are taken almost-surely.
Note that the initialization in Eq.~\eqref{eq:init} breaks the symmetry, so that no minimum is needed inside the expectation on the left-hand side.
Nevertheless, it is unsatisfying for the result to rely on small but non-negligible side information,
which may not be available.
The later paper \cite{montanari2021} establishes the same result as \cite{deshpande2016} when a spectral initialization is used;
that is, $\bz^0$ is taken to be the leading eigenvector of $\bY$, appropriately scaled.
For simplicity,
in this paper we will study the algorithm of \cite{deshpande2016} with initialization \eqref{eq:init},
although our results are easily extended to the case in which a spectral initialization is used (see Remark \ref{rmk:spectral}).

An alternative but related approach is based on optimization.
This approach, 
studied by \cite{fanMeiMontanari2021,celentanoFanMei2021},
is based on minimizing--- or attempting to minimize ---the so-called TAP free energy
\begin{equation}
\label{eq:TAP-1}
    \cF(\bbm;\bY)   
        := 
        - \frac{\lambda}{2n} \bbm^\top \bY \bbm
        - \frac1n \sum_{i=1}^n \sh(m_i) 
        - \frac{\lambda^2}{4}(1 - Q(\bbm))^2,
\end{equation}
over the domain $\bbm \in [-1,1]^n$,
where $\sh(m) := -\frac{1+m}{2}\log \frac{1+m}2 - \frac{1-m}{2} \log \frac{1-m}2$ is the binary entropy function. 
The TAP free energy can be seen as a correction to a `na\"ive mean field' free energy which was first derived in the context of spin-glasses in statistical physics \cite{thouless1977}.
The TAP free energy is related to the AMP algorithm Eq.~\eqref{eq:AMP-FMM}:
the fixed-points of Eq.~\eqref{eq:AMP-FMM} 
exactly correspond to the stationary points of $\cF(\,\cdot\,;\bY)$.
In fact, approximate message passing algorithms were first derived as an iterative construction of solution to the TAP equation $\nabla \cF(\bbm;\by) = 0$ \cite{Bolthausen2014}.
It is widely believed that the global minimizer of the TAP free energy asymptotically achieves Bayes optimal performance,
and the papers \cite{fanMeiMontanari2021,celentanoFanMei2021} have made progress towards making this conjecture rigorous.
\cite{fanMeiMontanari2021} establishes that for sufficiently large $\lambda$,
all stationary points of the TAP free energy with sufficiently small objective value are clustered in a small region around the Bayes estimate, and so asymptotically achieve Bayes optimal performance.
The paper \cite{celentanoFanMei2021} showed that for all $\lambda > 1$, 
the TAP free energy has a unique local minimizer close to the Bayes estimate, 
and established its global minimality subject to a conjecture that was checked numerically. 
Thus, 
there is strong evidence that minimizing the TAP free energy achieves asymptotically optimal performance throughout the weak-recovery regime $\lambda > 1$. 
One potential advantage of the TAP formulation, however, is that it opens the possibility to use methods like gradient descent or other descent algorithms to construct estimates of $\bx$. 
Our purpose here is not to give a complete account of the origin and merits of the TAP approach.
We refer the interested reader to the discussions in \cite{thouless1977,mezard1987spin,fanMeiMontanari2021,celentanoFanMei2021} and references therein.
Rather, for our purposes, 
the TAP free energy serves as an example of the use of the Sudakov-Fernique post-AMP inequality to probe the local structure of a non-convex objective around the iterates of an AMP algorithm. 
Indeed, our approach will reproduce some of the results of \cite{celentanoFanMei2021}, but arguably in a simpler way.

More recently, 
the paper \cite{AlaouiMontanariSelke2022} considered the problem of sampling from the Sherrington-Kirkpatrick Hamiltonian.
In particular, for a matrix $\bW \sim \GOE(n)$,
one would like to sample from the measure on $\{-1,1\}^n$ with density relative to he uniform measure
\begin{equation}
    \mu_{\bW}(\bx)
    =
    \frac1{Z(\lambda,\bW)} e^{(\lambda/2)\< \bx , \bW \bx \>},
\end{equation}
where $Z(\lambda,\bW)$ is a normalizing constant.
The authors propose a randomized algorithm which runs in polynomial time, takes as input the matrix $\bW$, and outputs a random vector $\bx^{\alg} \in \{-1,1\}^n$ with distribution, conditional on $\bW$, denoted by $\mu_{\bW}^{\alg}$.
For an appropriately chosen distance $\dist$ on probability measures (in this case, the $\ell_2$-Wasserstein distance, which we will define in Section \ref{sec:notation}),
the authors show that $\dist(\mu_\bW,\mu_\bW^\alg) \pconv 0$ when  $\lambda < 1/2$,
where the probabalistic convergence is over randomness in $\bW$ and auxiliary randomness in the algorithm. 
Further, for $\lambda > 1$, 
they present an impossibility result: 
no algorithm which satisfies a certain stability property is able to sample from $\mu_{\bW}$ in the sense $\dist(\mu_\bW,\mu_\bW^\alg) \pconv 0$. 
A phase transition occurs at $\lambda = 1$ in the structure of the measure $\mu_{\bW}$ which forms the basis of this impossibility result, and which has led to the conjecture that sampling from the SK measure for $\lambda > 1$ is impossible for any polynomial time algorithm, not just those satisfying the given stability property.
Conversely, it is conjectured that for $\lambda < 1$,
sampling is possible in polynomial time, and in fact, the authors of \cite{AlaouiMontanariSelke2022} conjecture their algorithm achieves this. 
Indeed, for $\lambda \in [1/2,1)$,
their proof breaks down in only one step,
in which the local strong convexity of a certain TAP free energy, similar but distinct from the one in Eq.~\eqref{eq:TAP-1}, must be but has not been established. 

We will not in this paper describe the details of the SK-sampling algorithm, and refer the interested reader to \cite{AlaouiMontanariSelke2022} for details.
The step in which their argument breaks down occurs in a sub-routine which estimates the posterior mean of $\bx$ in the $\Z_2$-synchronization model \eqref{eq:obs} with side information \eqref{eq:init}.
Note that above, although we use $\by$ as an initialization, 
our interest was in computing the Bayes estimator given $\P(\bx \in \cdot | \bY)$;
now we will be interested in computing the mean of $\P(\bx \in \,\cdot\, | \bY , \by)$.
This can be computed,
as above, using an AMP algorithm or by minimizing a certain TAP free energy whose stationary points agree with the fixed points of the AMP algorithm.
Because we have a different target, 
the AMP algorithm used by \cite{AlaouiMontanariSelke2022} is modified to
\begin{equation}\label{eq:AMP-AMS}
\begin{aligned}
    \bz^0 &= \by,
    \qquad\qquad\qquad&
    \bbm^s &= \tanh(\bz^s),
    \\
    \bz^{s+1} &= \lambda \bY \bbm^s + \by - \lambda \sbhat_s\bbm^{s-1},
    \qquad\qquad\qquad
    &
    \sbhat_s &= \lambda(1 - Q(\bbm^s)),
\end{aligned}
\end{equation}
and the TAP free energy they use is modified to
\begin{equation}
\label{eq:TAP-2}
    \cF(\bbm;\bY)   
        := 
        - \frac{\lambda}{2n} \bbm^\top \bY \bbm
        - \frac1n \sum_{i=1}^n \sh(m_i) 
        - \frac1n \< \by,\bbm\>
        - \frac{\lambda^2(1-q_\infty)(1+q_\infty-2Q(\bbm))}{4},
\end{equation}
where $q_\infty$ is a deterministic constant which we will specify below in Proposition \ref{prop:Z2-se}.
We will show that with high-probability as $n \rightarrow \infty$,
the AMP algorithm \eqref{eq:AMP-AMS} in finitely many iterations arrives in a region of space in which the TAP free energy \eqref{eq:TAP-2} is locally strongly convex and contains a unique stationary point. 
We establish this for any $\lambda > 0$. 
For $\lambda \in [1/2,1)$,
this closes the only gap in the proof of \cite{AlaouiMontanariSelke2022} and confirms that in this regime,
their algorithm samples from the SK Gibbs measure in polynomial time in the sense of 
$\dist(\mu_\bW,\mu_\bW^\alg) \pconv 0$.

\subsection{Overview of results}
\label{sec:overview}

We now provide a brief overview of our results.

\begin{enumerate}

    \item We prove a comparison inequality--- \emph{the Sudakov-Fernique post-AMP inequality} ---which allows one to study properties of an optimization landscape defined in terms of the iterates of an approximate message passing algorithm.
    Strictly speaking, this is not a new comparison inequality: 
    in fact, it is nothing but the original Sudakov-Fernique inequality applied to a matrix whose distribution conditional on the AMP iterates is drawn from a Gaussian Orthogonal Ensemble.
    Moreover, the idea of applying the Sudakov-Fernique inequality conditionally on linear constraints is not new; for example, it was used extensively in \cite{celentanoFanMei2021}.
    Our main insight is to identify certain structure in the conditional inequality which facilitates its analysis, 
    and to write the inequality in a way which better elucidates this structure.
    The structure we identify is as follows: 
    whereas the original Sudakov-Fernique inequality involves an optimization involving a single high-dimensional Gaussian vector,
    the Sudakov-Fernique post-AMP inequality involves an optimization involving a finite number of high-dimensional approximately-Gaussian vectors.
    This structure greatly simplifies the analysis of the Sudakov-Fernique post-AMP inequality.

    \item We provide a novel proof that the TAP free energy of \cite{fanMeiMontanari2021} is strongly convex in a local region around the AMP fixed point. 
    Moreover, we show that for sufficiently large (but fixed) $k$, 
    the $k^\text{th}$ iterate of the AMP algorithm arrives in this local region of strong convexity with asymptotically high-probability.
    This result already appeared in \cite{celentanoFanMei2021},
    but our analysis simplifies several key steps, 
    and in particular,
    avoids using the Kac-Rice formula.
    In fact, the structure described above allows us to reduce the analysis of the Sudakov-Fernique inequality conditioned on a large number of AMP iterates to a certain low-dimensional optimization problem which was already analyzed in \cite{celentanoFanMei2021}.

    \item We apply our proof technique to the modified TAP free energy and AMP iteration considered in \cite{AlaouiMontanariSelke2022},
    establishing for the first time the local strong convexity of their TAP free energy around sufficiently late iterates of the AMP algorithm.
    This confirms their conjecture that the algorithm they present successfully samples from the Sherrington-Kirkpatrick Gibbs measure for all $\lambda < 1$.

    We are able to analyze the TAP free energy of \cite{celentanoFanMei2021} and of \cite{AlaouiMontanariSelke2022} in almost exactly the same way,
    and, in fact, our presentation of the two arguments is unified. 
    We feel that the ability to seamlessly analyze both problems at once is emblematic of the simplicity of the Sudakov-Fernique post-AMP approach we develop.

\end{enumerate}

\subsection{Related literature}
\label{sec:lit}

The Sudakov-Fernique inequality was developed in \cite{sudakov1971gaussian,sudakov1979geometric, fernique1975regularite},
which built on the Slepian inequality of \cite{slepian1962}.
The Sudakov-Fernique inequality was used extensively in the analysis of the TAP free energy in \cite{celentanoFanMei2021},
and the analysis in this paper is inspired by the analysis which occurs there. 
\cite{celentanoFanMei2021} apply the Sudakov-Fernique inequality after conditioning on stationarity of the TAP free energy at a point $\bbm$ chosen \emph{a priori}.
The TAP free energy in general has exponentially many stationary points,
and the authors are interested in local convexity around stationary points $\bbm$ which have been chosen \emph{a posteriori} using the AMP algorithm.
Thus, sophisticated arguments involving the Kac-Rice formula are required to show that the TAP free energy landscape around stationary points chosen a priori behave likes the landscape around points chosen a posteriori in this way. 
We avoid this problem by conditioning on the full sequence of AMP iterates, and thus avoid the Kac-Rice formula. 
Nevertheless, some keys steps in our argument, which we will point out in our proofs, are borrowed directly from \cite{celentanoFanMei2021}.

The use of the Sudakov-Fernique inequality to study optimization problems in spiked $\GOE$ models is related to the use of Gordon's comparison inequality to study optimization problems in regression models.
Gordon's inequality was first developed in \cite{Gordon1985,gordon1988},
and has recently been applied to study regression estimates in \cite{stojnic2013,thrampoulidis2015a,thrampoulidis2015b,miolane2021,celentanoMontanari2021}.
The paper \cite{celentanoMontanari2021} used Gordon conditionally on the stationary point of an optimization problem, and arrived at a conditional comparison inequality with similar structure to that of the Sudakov-Fernique post-AMP inequality.

The sharp characterization of Approximate Message Passing algorithms using state evolution was developed in \cite{Bolthausen2014,bayatiMontanari2010,javanmardMontanari2013,berthier2019}.
The Bayes risk in the $\Z_2$-synchronization problem and the Bayes optimality of the corresponding AMP algorithm was first studied by \cite{deshpande2016},
and the paper \cite{montanariVenkatarmanan2021} extended the analysis to AMP with spectral initialization.
The TAP free energy was first introduced by physicists as a variational formula for the free energy in the Sherrington-Kirkpatrick model \cite{thouless1977}. 
Substantial literature has studied the relationship between the TAP free energy and the Sherrington-Kirkpatrick Gibbs measure and $\Z_2$-synchronization posterior.
We refer the reader to the discussions in \cite{fanMeiMontanari2021,celentanoFanMei2021} for pointers to the relevant literature.
As we have already described, this paper builds on the paper \cite{celentanoFanMei2021,AlaouiMontanariSelke2022},
which study the local convexity of the TAP free energy in the $\Z_2$-synchronization model and sampling from the Sherrington-Kirkipatrick Hamiltonian, respectively.

\subsection{Notation}
\label{sec:notation}

For a sequence of random variables $X_n$,
we denote
\begin{equation}
    \pliminf_{n \rightarrow \infty}
        = \sup \Big\{ 
            t \in \reals 
            \Bigm| 
            \P(X_n \leq t) \rightarrow 0
        \Big\},
    \qquad 
    \plimsup_{n \rightarrow \infty}
        = \inf \Big\{ 
            t \in \reals 
            \Bigm| 
            \P(X_n \geq t) \rightarrow 0
        \Big\}.
\end{equation}
We use $\,\pconv\,$ to denote convergence in probability, 
and for a constant $c \in \reals$, we write $\plim_{n \rightarrow \infty} X_n = c$ to mean $X_n \pconv c$.
Recall that for probability measures $\mu,\nu$ on $\reals^m$,
a coupling of $\mu$ and $\nu$ is a probability measure $\Pi$ on $(\reals^m)^2$,
whose first $m$ coordinates are distributed according to $\mu$,
and whose last $m$ coordinates are distributed according to $\nu$.
We denote by $W_2(\reals^m)$ the space of square integrable probability measures on $\reals^m$ endowed with the $\ell_2$-Wasserstein metric,
\begin{equation}
    W_2(\mu,\nu)
        :=
        \inf_{\Pi}
            \E[\| \bX - \bY \|_2^2]^{1/2},
\end{equation}
where the infimum is over couplings $\Pi$ of $\mu$ and $\nu$, and $(\bX,\bY) \sim \Pi$.
There is a mild abuse in notation--- $W_2(\reals^m)$ denotes the metric space, and $W_2(\mu,\nu)$ denotes the metric ---but this should cause no confusion.
For a sequence of probability measures $\mu_n \in W_2(\reals^m)$,
we write $\mu_n \Wconv \mu$ to mean $W_2(\mu_n,\mu) \rightarrow 0$.
For a square matrix $\bGamma \in \reals^{m \times m}$ or a bi-infinite matrix $\bGamma \in \reals^{\Z_{>0} \times \Z_{> 0}}$,
we use $\bGamma_{\leq k}$ to denote the $k\times k$ matrix consisting of its first $k$ rows and columns, $\bGamma_{k,\,\cdot\,}$ to denote its $k^\text{th}$ row, and $\bGamma_{\,\cdot\,,k}$ to denotes its $k^\text{th}$ column, and $K_{i,j}$ to denote its $(i,j)^{\text{th}}$ entry.
For a matrix $\bM \in \reals^{n \times m}$, 
we will typically denote columns of $\bM$ with lower-case bold-face, as in $\bbm_i$ for the $i^\text{th}$ column,
and rows of $\bM$ with lower-case bold-face with a breve accent, as in $\brevebm_i$ for the $i^\text{th}$ row.
In any case, we will specify how we denote rows and columns in the text when this comes up.
For a collection of vectors and matrices, we will use $\muhat$ with these vectors and matrices as subscripts to denote the joint empirical distribution of their coordinates and rows. 
For example, if $\ba \in \reals^n$ and $\bA \in \reals^{n \times m}$,
then $\muhat_{\ba,\bA}$ is the distribution on $\reals^{m + 1}$ given by
\begin{equation}
    \muhat_{\ba,\bA} := \frac1n \sum_{i=1}^n \delta_{a_i,\breve\ba_i},
\end{equation}
where $\delta_{a_i,\breve\ba_i}$ is a unit-sized point mass at $(a_i,\breve\ba_i)$.
We will denote by $\S_{>0}^m$ the space of $m\times m$ symmetric positive-definite matrices.
We denote by $\S_{>0}^\infty$ the space of bi-infinite symmetric matrices whose finite diagonal sub-matrices are all positive definite.
For a matrix $\bA$,
we denote by $\| \bA \|_{\op}$ its operator norm.
For a vector $\bx \in \reals^n$,
we denote by $\| \bx \|$ its Euclidean norm.  
For any $n$, we denote by $\sB_2(0,R)$ the Euclidean ball in $\reals^n$ with radius $R$ centered at the origin: $\sB_2(0,R) = \{ \bx \in \reals^n \mid \| \bx \| \leq n\}$.
The dimension will always be clear from context.

\section{The Sudakov-Fernique post-AMP inequality}
\label{sec:SF-post-AMP}

First, we recall the following form of classical Sudakov-Fernique inequality 
\cite{sudakov1971gaussian,sudakov1979geometric, fernique1975regularite}.
\begin{proposition}[Sudakov-Fernique inequality]\label{lem:slepian}
    Let $\bW \sim \GOE(n)$,
    $\bxi \sim \normal(\bzero,\id_n)$,
    and for $\bv \in \reals^n$,
    let $\bg(\bv) := \| \bv \| \bxi$.
    Then for any $t \in \reals$,
    closed set $K \subset \reals^{n+m}$,
    and continuous function $f:\reals^{n+m} \rightarrow \reals$,
    \begin{equation}
    \label{eq:SF}
        \P\left(\sup_{(\bv,\bu) \in K} \bv^\top \bW \bv + f(\bu,\bv) \geq t\right)
        \leq \P\left(\sup_{(\bv,\bu) \in K} \frac{2}{\sqrt{n}}
        \langle \bg(\bv) , \bv \rangle+f(\bu,\bv) \geq t\right).
    \end{equation}
    (Here $\bv \in \reals^n$ and $\bu \in \reals^m$ for some non-negative integer $m$).
\end{proposition}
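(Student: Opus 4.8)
The plan is to deduce this from the classical Sudakov-Fernique comparison inequality for suprema of Gaussian processes, applied to two carefully matched centered Gaussian fields indexed by the compact parameter set obtained from $K$. First I would reduce to a compact index set: since $f$ is continuous and $K$ is closed, and since both sides are monotone under enlarging $K$, it suffices to prove the inequality with $K$ replaced by $K \cap \sB_2(0,R)$ for each fixed $R$ and then let $R \to \infty$ (using that the probabilities on both sides are increasing in $R$ and taking a limit); on $K \cap \sB_2(0,R)$ the relevant quantities are bounded and the Gaussian processes below have almost-surely bounded, continuous sample paths, so the classical theorem applies cleanly.

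Next I would set up the two Gaussian processes. On the left, for $(\bv,\bu) \in K$ define $X_{\bv,\bu} := \bv^\top \bW \bv$, which is centered Gaussian in $\bW \sim \GOE(n)$ (the deterministic shift $f(\bu,\bv)$ plays no role in the comparison and can be added back at the end, since Sudakov-Fernique permits an arbitrary deterministic function to be added to both processes). On the right, define $Y_{\bv,\bu} := \tfrac{2}{\sqrt n}\langle \bg(\bv),\bv\rangle = \tfrac{2}{\sqrt n}\|\bv\|\,\langle \bxi,\bv\rangle$, again centered Gaussian in $\bxi \sim \normal(\bzero,\id_n)$. The key computation is to compare increments. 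For the GOE field one computes, using $\E[W_{ij}W_{kl}] = \tfrac1n(\delta_{ik}\delta_{jl}+\delta_{il}\delta_{jk})$ for $i\le j$ appropriately normalized, that
\begin{equation}
\E\big[(X_{\bv,\bu}-X_{\bv',\bu'})^2\big] = \frac{2}{n}\big\|\bv\bv^\top - \bv'(\bv')^\top\big\|_F^2 = \frac{2}{n}\Big(\|\bv\|^4 + \|\bv'\|^4 - 2\langle \bv,\bv'\rangle^2\Big).
\end{equation}
For the Gaussian field one computes directly
\begin{equation}
\E\big[(Y_{\bv,\bu}-Y_{\bv',\bu'})^2\big] = \frac{4}{n}\,\E\big[(\|\bv\|\langle\bxi,\bv\rangle - \|\bv'\|\langle\bxi,\bv'\rangle)^2\big] = \frac{4}{n}\Big(\|\bv\|^4 + \|\bv'\|^4 - 2\|\bv\|\,\|\bv'\|\langle\bv,\bv'\rangle\Big).
\end{equation}
Then I would verify the Sudakov-Fernique hypothesis $\E[(X_{\bv,\bu}-X_{\bv',\bu'})^2] \le \E[(Y_{\bv,\bu}-Y_{\bv',\bu'})^2]$, which after cancelling common terms reduces to the elementary inequality $\langle\bv,\bv'\rangle^2 \le \|\bv\|\,\|\bv'\|\,\langle\bv,\bv'\rangle$ — equivalently, writing $\langle\bv,\bv'\rangle = \|\bv\|\|\bv'\|\cos\theta$, to $\cos^2\theta \le \cos\theta \cdot (\text{something} \ge |\cos\theta|)$; more precisely it is $\|\bv\|\|\bv'\|\cos^2\theta \le \cos\theta\,\langle \bv,\bv'\rangle = \|\bv\|\|\bv'\|\cos^2\theta$ when $\cos\theta \ge 0$ — wait, this needs care. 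The clean way: the inequality $\langle\bv,\bv'\rangle^2 \le \|\bv\|\|\bv'\|\langle\bv,\bv'\rangle$ holds iff $\langle\bv,\bv'\rangle\big(\|\bv\|\|\bv'\| - \langle\bv,\bv'\rangle\big) \ge 0$; the second factor is $\ge 0$ by Cauchy–Schwarz, but the first factor can be negative, so one also checks the case $\langle\bv,\bv'\rangle < 0$, where instead $\langle\bv,\bv'\rangle^2 = |\langle\bv,\bv'\rangle|^2 \le \|\bv\|^2\|\bv'\|^2$ must be compared — here one uses that when $\langle\bv,\bv'\rangle<0$ we have $-2\|\bv\|\|\bv'\|\langle\bv,\bv'\rangle \ge 2\langle\bv,\bv'\rangle^2 - \ldots$; I would organize this by splitting on the sign of $\langle\bv,\bv'\rangle$ and in each case bounding $\langle\bv,\bv'\rangle^2$ appropriately against $\|\bv\|\|\bv'\|\langle\bv,\bv'\rangle$ using Cauchy–Schwarz, noting that when the inner product is nonnegative the desired bound is immediate and when it is negative the right-hand increment variance only gets larger while we may simply bound $\langle \bv,\bv'\rangle^2 \le \|\bv\|^2\|\bv'\|^2 \le$ the available slack. (A cleaner reformulation: the required inequality is exactly $2\|\bv\|\|\bv'\|\langle\bv,\bv'\rangle \le 2\langle\bv,\bv'\rangle^2 + (\|\bv\|\|\bv'\| - \langle\bv,\bv'\rangle)^2$ after rearrangement, no wait — I will just verify $\|\bv\bv^\top - \bv'\bv'^\top\|_F^2 \le 2(\|\bv\|^4+\|\bv'\|^4) - 4\|\bv\|\|\bv'\|\langle\bv,\bv'\rangle$ termwise.) I would also record that the variances themselves satisfy $\E[X_{\bv,\bu}^2] = \E[Y_{\bv,\bu}^2] = \tfrac{2}{n}\|\bv\|^4$ at coincident points (indeed Sudakov-Fernique only needs the increment inequality, not equality of variances, but it is reassuring and in fact here they match), so no correction term is needed.

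Finally, I would invoke the classical Sudakov-Fernique theorem in the form: if $(X_\theta)$, $(Y_\theta)$ are centered Gaussian processes on a compact index set with a.s.\ continuous paths and $\E[(X_\theta - X_{\theta'})^2] \le \E[(Y_\theta - Y_{\theta'})^2]$ for all $\theta,\theta'$, then $\E[\sup_\theta X_\theta] \le \E[\sup_\theta Y_\theta]$, and more relevantly in its tail form, $\P(\sup_\theta (X_\theta + h(\theta)) \ge t) \le \P(\sup_\theta (Y_\theta + h(\theta)) \ge t)$ for any deterministic $h$ and any $t$ (this tail version is the standard consequence used for concentration-type statements and follows from the same Slepian–Kahane interpolation argument; I would cite it from the Sudakov-Fernique references \cite{sudakov1971gaussian,sudakov1979geometric,fernique1975regularite}). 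Taking $h = f$ and $t$ arbitrary, then sending $R \to \infty$, yields Eq.~\eqref{eq:SF}. I expect the main obstacle to be purely bookkeeping: justifying the passage to a compact index set and the $R\to\infty$ limit rigorously (the set $K\cap\sB_2(0,R)$ may be empty or the suprema may be $-\infty$, which is harmless, and one must check the events are nested and the limits interchange), together with the careful case analysis for the increment inequality when $\langle\bv,\bv'\rangle$ is negative. Neither is deep, but the increment comparison is where all the content lies, and it is exactly the computation that shows the $\GOE$ quadratic form is "no rougher" than the linear Gaussian proxy $\tfrac{2}{\sqrt n}\langle\bg(\bv),\bv\rangle$.
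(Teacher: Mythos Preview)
The paper does not prove this proposition; it is stated as a classical result with references. Your approach via the increment comparison is the natural one, and the increment inequality you need does hold: writing $a=\|\bv\|^2$, $b=\|\bv'\|^2$, $c=\langle\bv,\bv'\rangle$, one must show $a^2+b^2+2c^2-4\sqrt{ab}\,c\ge 0$; viewed as a quadratic in $c$ this has discriminant $16ab-8(a^2+b^2)=-8(a-b)^2\le 0$, so no sign splitting is needed and the case analysis you struggle with is unnecessary.

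However, there is a genuine gap. You assert that the tail inequality $\P(\sup_\theta(X_\theta+h(\theta))\ge t)\le\P(\sup_\theta(Y_\theta+h(\theta))\ge t)$ ``follows from the same Slepian--Kahane interpolation argument'' as the expectation version. This is not correct: the increment condition $\E[(X_\theta-X_{\theta'})^2]\le\E[(Y_\theta-Y_{\theta'})^2]$ yields only $\E[\sup(X_\theta+h(\theta))]\le\E[\sup(Y_\theta+h(\theta))]$. Stochastic domination of the supremum requires Slepian-type hypotheses---equal pointwise variances together with a covariance ordering---and here the pointwise variances do \emph{not} match: contrary to your claim, $\E[X_{\bv}^2]=\tfrac{2}{n}\|\bv\|^4$ whereas $\E[Y_{\bv}^2]=\tfrac{4}{n}\|\bv\|^4$. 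In fact the stated inequality already fails when $K$ is a single point $\bv_0$ with $f=0$ and $t<0$, since a centered Gaussian with the smaller variance has a \emph{larger} probability of exceeding a negative threshold. The paper's downstream uses are ultimately asymptotic (the proposition is invoked in Theorem~\ref{thm:cond-SF}, which in turn feeds only into the $\plimsup$ statement of Theorem~\ref{thm:SF-post-AMP-gen}), and for that purpose the expectation comparison combined with Gaussian concentration of the supremum would suffice; but your argument as written does not establish the finite-$n$ tail inequality that is claimed.
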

\noindent Typically we would like to understand the optimization problem on the left-hand side of Eq.~\eqref{eq:SF},
but this is difficult because it involves a $\GOE(n)$ matrix.
The utility of the Sudakov-Fernique inequality is that it allows us to instead study the optimization problem on the right-hand side of Eq.~\eqref{eq:SF}, which is substantially easier because it involves a random vector. 
The Sudakov-Fernique inequality states that this simpler optimization problem is stochastically larger than the complicated one involving a $\GOE(n)$ matrix.

An important insight of this paper is that the comparison inequality we arrive at by applying Sudakov-Fernique after a finite number of iterations of an approximate message passing algorithm has a structure remarkably similar to that in Eq.~\eqref{eq:SF}.
Although we will apply these results to the particular AMP algorithms in Eqs.~\eqref{eq:AMP-FMM} and \eqref{eq:AMP-AMS},
we will state the Sudakov-Fernique post-AMP inequality for a more general AMP algorithm,
as appears in, for example, \cite{javanmardMontanari2013}.
We will then specialize it to AMP for the $\Z_2$-synchronization problem.

\subsection{Sudakov-Fernique post-AMP: the general case}

An AMP algorithm is defined by a sequence of $L_s$-Lipschitz functions $f_s : \reals^{s+1} \rightarrow \reals$ for some sequence of Lipschitz constants $L_s$. 
The algorithm takes the form
\begin{equation}
\label{eq:AMP-gen}
\begin{aligned}
    \bg^0 &\sim \normal(0,\id_n),
    \qquad\qquad\qquad&
    \bbm^s &= f_s(\bg^0,\ldots,\bg^s),
    \\
    \bg^{s+1} &= \bW \bbm^s - \sum_{j=1}^s \sb_{sj} \bbm^{j-1},
\end{aligned}
\end{equation}
where $\sb_{sj}$ are carefully chosen deterministic constants which we will define below.
The functions $f_s$ are applied row-wise to $\bg^0,\ldots,\bg^s$;
that is, $f_s(\bg^0,\ldots,\bg^s)_i = f_s(g^0_i,\ldots,g^s_i)$.
The iterations in Eqs.~\eqref{eq:AMP-FMM} and \eqref{eq:AMP-AMS} are not in the form given by Eq.~\eqref{eq:AMP-gen},
but are well approximated by an iteration in this form,  
so that, using an argument that is standard in the AMP literature, our main result will extend to these iterations.

The behavior of the iterates of the AMP algorithm are well described by a certain scalar iteration called \emph{state evolution}.
State evolution is given by a bi-infinite covariance matrix $\bK \in \S^\infty_{\geq0}$
defined inductively by
\begin{equation}
\label{eq:se-gen}
    K_{s+1,t+1} = \E[f_s(G_0,\ldots,G_s)f_t(G_0,\ldots,G_t)],
\end{equation}
where $(G_1,\ldots,G_t) \sim \normal(0,\bK_{\leq t})$ independent of $G_0 \sim \normal(0,1)$.
Note that the base case of this definition is handled automatically: $K_{1,1} = \E[f_0(G_0)^2]$, where $G_0 \sim \normal(0,1)$.
Then, for $1 \leq j \leq s$, 
\begin{equation}
\label{eq:onsag-coef}
    \sb_{sj} := \E[\partial_{G_j} f_s(G_0,\ldots,G_s)],
\end{equation}
where $(G_1,\ldots,G_t)$ and $G_0$ have the distribution given by the state evolution.
Here, the derivative exists almost surely by Rademacher's theorem.

The state evolution for AMP is given by the following theorem, proved, for example, in \cite{javanmardMontanari2013}.
\begin{proposition}[State evolution]
\label{prop:se-gen}
    For $\bW \sim \GOE(n)$ and any $k \in \Z_{>0}$,
    the empirical distribution of the coordinates of the AMP iteration converges almost surely 
    in $W_2(\reals^{k+1})$ as follows:
    \begin{equation}
        \frac1n \sum_{i=1}^n \delta_{(g_i^0,\ldots,g_i^{k+1})}
            \xrightarrow[n \rightarrow \infty]{W_2}
            \normal(0,1) \times \normal(0,\bK_{\leq k}).
    \end{equation}
    In particular, if $G_0 \sim \normal(0,1)$ independent of $(G_1,\ldots,G_k) \sim \normal(0,\bK_{\leq k})$ and, for $0 \leq s \leq k$, $M_s := f_s(G_0,\ldots,G_s)$,
    we get almost surely
    \begin{equation}
        \frac1n \sum_{i=1}^n \delta_{(m_i^0,\ldots,m_i^k,g_i^0,\ldots,g_i^{k+1})}
            \xrightarrow[n \rightarrow \infty]{W_2}
            (M_0,\ldots,M_k,G_0,\ldots,G_k).
    \end{equation}
\end{proposition}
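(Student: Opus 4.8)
This is the classical state-evolution characterization of AMP run with a $\GOE$ matrix, and the natural route is an induction on $k$ via the Gaussian conditioning technique of \cite{Bolthausen2014,bayatiMontanari2010}; the general form stated here is exactly that of \cite{javanmardMontanari2013,berthier2019}. The base case $k=0$ is immediate: $\bg^0\sim\normal(0,\id_n)$ has i.i.d.\ standard Gaussian coordinates, so $\muhat_{\bg^0}\Wconv\normal(0,1)$ by the law of large numbers together with a uniform second-moment bound, and since $\bbm^0=f_0(\bg^0)$ with $f_0$ Lipschitz (hence continuous with at most linear growth) we get $\muhat_{\bbm^0,\bg^0}\Wconv(M_0,G_0)$. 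The iterate $\bg^1=\bW\bbm^0$ is produced by the first instance of the inductive step.

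For the inductive step, assume the claim through iteration $k$; testing the assumed $W_2$-convergence against the functions $\bx\mapsto x_ix_j$ (legitimate given the accompanying uniform moment control) yields $\langle\bbm^i,\bbm^j\rangle/n\pconv K_{i+1,j+1}$ for $0\le i,j\le k$. Let $\cF_k$ be the $\sigma$-algebra generated by $\bg^0,\ldots,\bg^k$, so that $\bbm^0,\ldots,\bbm^k$ and the linear images $\bW\bbm^0,\ldots,\bW\bbm^{k-1}$ are all $\cF_k$-measurable. Conditioning on $\cF_k$ and using the elementary formula for the conditional law of a $\GOE$ matrix subject to linear constraints, one gets $\bW\stackrel{d}{=}\bD+\proj^\perp\widetilde{\bW}\,\proj^\perp$, where $\bD$ is an $\cF_k$-measurable symmetric matrix, $\proj^\perp$ is the orthogonal projection onto the complement of $\mathrm{span}(\bbm^0,\ldots,\bbm^{k-1})$, and $\widetilde{\bW}\sim\GOE(n)$ is independent of $\cF_k$.

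Decompose $\bbm^k=\bbm^k_\parallel+\bbm^k_\perp$ along $\mathrm{span}(\bbm^0,\ldots,\bbm^{k-1})$ and its complement, so that $\bg^{k+1}=\bD\bbm^k-\sum_{j=1}^{k}\sb_{kj}\bbm^{j-1}+\proj^\perp\widetilde{\bW}\,\bbm^k_\perp$. The crux is an algebraic identity---valid up to an $\ell_2$-error of order $o(\sqrt n)$ by the inductive concentration estimates---asserting that (i) the deterministic part $\bD\bbm^k-\sum_j\sb_{kj}\bbm^{j-1}$ equals a linear combination $\sum_{j=1}^{k}\alpha_j\bg^{j}$ with asymptotically deterministic coefficients $\alpha_j$ dictated by the recursion \eqref{eq:se-gen}---this is precisely the cancellation of ``memory'' terms for which the Onsager coefficients \eqref{eq:onsag-coef} were designed---and (ii) $\proj^\perp\widetilde{\bW}\,\bbm^k_\perp=\|\bbm^k_\perp\|\,\widetilde{\bxi}+o(\sqrt n)$ with $\widetilde{\bxi}\sim\normal(0,\id_n)$ fresh, using that $\widetilde{\bW}\bv/\|\bv\|$ is approximately a standard Gaussian vector and that removing a fixed-dimensional subspace via $\proj^\perp$ does not affect the limiting empirical distribution. (When $\|\bbm^k_\perp\|/\sqrt n\to0$ the fresh component simply drops out and $\bK$ degenerates, consistent with $\bK\in\S^\infty_{\geq0}$.) Adding (i) and (ii), computing the limiting covariance of $(\bg^1,\ldots,\bg^{k+1})$, and checking against \eqref{eq:se-gen} that it equals $\bK_{\le k+1}$ gives $\muhat_{\bg^0,\ldots,\bg^{k+1}}\Wconv\normal(0,1)\times\normal(0,\bK_{\le k+1})$; pushing forward by $f_{k+1}$ then yields the joint statement including $\bbm^{k+1}$. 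At each step, upgrading weak to $W_2$ convergence needs uniform integrability, which follows from an $L^p$ moment bound on all iterates carried along the same induction using the linear growth of the $f_s$.

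The step I expect to be the main obstacle is the bookkeeping in the inductive step: one must control the several $o(\sqrt n)$ discrepancies introduced by the conditioning---between $\bD\bbm^k$ and its idealized value, in the near-orthogonality of $\bbm^k_\perp$ to the span of earlier iterates, and in the fluctuation of $\|\bbm^k_\perp\|^2/n$ about its state-evolution limit---and ensure they do not accumulate across the (fixed but arbitrary) number of iterations. Packaging this cleanly is exactly the content of the quantitative state-evolution theorems of \cite{bayatiMontanari2010,javanmardMontanari2013,berthier2019}, of which the present proposition is an immediate specialization; accordingly I would not reproduce the full argument but invoke these references, perhaps after spelling out the conditioning identity in the $\GOE$ case for completeness.
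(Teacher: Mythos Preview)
Your proposal is correct and aligns with the paper's treatment: the paper does not prove this proposition at all but simply states it as a known result, citing \cite{javanmardMontanari2013}. Your sketch of the Bolthausen--Bayati--Montanari conditioning argument is accurate and more detailed than anything the paper provides, and your conclusion---to invoke the existing references rather than reproduce the full induction---is exactly what the paper does.
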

Proposition \ref{prop:se-gen} states that, in a certain sense,
the iterates $\bg^0,\ldots,\bg^k$ behave like high-dimensional jointly Gaussian vectors with a covariance described by the state evolution matrix $\bK_{\leq k}$.
A consequence of Proposition \ref{prop:se-gen} and the definition of $\bK$ in Eq.~\eqref{eq:se-gen} is that the geometry of the iterates $\bbm^s$ agrees with the geometry of the iterates $\bg^{s+1}$.
In particular,
if we let $\bM \in \reals^{n \times k}$ be the matrix whose $s^\text{th}$ column is $\bbm^{s-1}$,
and $\bG \in \reals^{n \times k}$ be the matrix whose $s^\text{th}$ column is $\bg^s$, 
we have almost surely
\begin{equation}
    \lim_{n \rightarrow \infty} \frac1n\bM^\top \bM
        =
        \lim_{n \rightarrow \infty} \frac1n\bG^\top \bG
        =
        \bK_{\leq k}.
\end{equation}
Provided $\bK_{\leq k}$ is positive definite,
both $\{ \bbm^0,\ldots,\bbm^{k-1}\}$ and $\{ \bg^1,\ldots,\bg^k\}$ span $k$-dimensional subspaces of $\reals^n$, 
and there is a canonical linear mapping between them which maps $\bbm^{s-1}$ to $\bg^s$ for all $s$ which, for $k$ fixed with $n \rightarrow \infty$, is an approximate isometry.
This linear mapping is given by
\begin{equation}
\label{eq:T-AMP-gen}
    \bT := \bG (\bM^\top \bM)^{-1} \bM^\top.
\end{equation}
The mapping $\bT$ first projects onto the space spanned by $\{ \bbm^0,\ldots,\bbm^{k-1}\}$,
and then maps this space onto $\{ \bg^0,\ldots,\bg^k\}$ through an approximate isometry. 

We are now ready to state the Sudakov-Fernique post-AMP inequality.
\begin{theorem}[Sudakov-Fernique post-AMP]
\label{thm:SF-post-AMP-gen}
    Consider $\bW \sim \GOE(n)$, any $k \in \Z_{>0}$,
    and the AMP iteration in Eq.~\eqref{eq:AMP-gen}.
    Let $\bxi \sim \normal(0,\id_n)$ independent of everything else.
    Define $\bT$ as in Eq.~\eqref{eq:T-AMP-gen}, 
    and, for $\bv \in \reals^n$, define
    \begin{equation}
        \bg_{\AMP}(\bv)
            := 
            \sqrt{n} \bT \bv 
            + 
            \| \proj_{\bM}^\perp \bv \| \bxi.
    \end{equation}    
    For some $R > 0$ and $m \in \Z_{\geq 0}$,
    let $K_n = K_n(\bM,\bG)$ be a subset of $\reals^{n + m}$ contained in the ball $\sB_2(0,R)$ which may depend on the AMP iterates $\bM,\bG$, 
    and let $f_n: \reals^{n+m} \times (\reals^{n \times k})^2 \times \reals^n$ be a continuous function.
    Then,
    \begin{equation}
        \plimsup_{n \rightarrow \infty} 
        \sup_{(\bv,\bu) \in K_n} \big\{ \bv^\top \bW \bv + f_n(\bv,\bu;\bM,\bG,\bg^0) \big\}
        \leq 
        \plimsup_{n \rightarrow \infty} 
        \sup_{(\bv,\bu) \in K_n} \Big\{ \frac{2}{\sqrt{n}}\< \bg_{\AMP}(\bv) , \bv \> + f_n(\bv,\bu;\bM,\bG,\bg^0) \Big\}.
    \end{equation}
\end{theorem}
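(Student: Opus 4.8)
The plan is to deduce Theorem \ref{thm:SF-post-AMP-gen} from the classical Sudakov-Fernique inequality (Proposition \ref{lem:slepian}) applied to the conditional law of $\bW$ given the AMP iterates. The key structural fact is that, conditioned on $\sigma(\bg^0,\ldots,\bg^k)$, the matrix $\bW$ is still a $\GOE$ matrix but now subject to the linear constraints $\bW \bbm^s = \bg^{s+1} + \sum_{j=1}^s \sb_{sj}\bbm^{j-1}$ for $0 \le s \le k-1$, i.e. $\bW \bM = \bG + \bM \bB$ for an appropriate strictly-lower-triangular matrix $\bB$ of Onsager coefficients. First I would recall (this is the standard conditioning lemma in the AMP literature, e.g.\ in \cite{bayatiMontanari2010,berthier2019}) that the conditional distribution of $\bW$ given these constraints can be written as
\begin{equation}
    \bW \mid \{\bW\bM = \bD\} \;\stackrel{d}{=}\; \bE + \proj_\bM^\perp \widetilde{\bW} \proj_\bM^\perp,
\end{equation}
where $\bE = \bE(\bM,\bD)$ is the deterministic matrix of minimal norm satisfying the constraint (built from $\bM$, $\bG$, $\bB$), and $\widetilde{\bW}$ is an independent fresh $\GOE(n)$ matrix. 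Plugging this into the quadratic form gives, for $\bv \in \reals^n$,
\begin{equation}
    \bv^\top \bW \bv \;\stackrel{d}{=}\; \bv^\top \bE \bv + (\proj_\bM^\perp \bv)^\top \widetilde{\bW} (\proj_\bM^\perp \bv).
\end{equation}
The deterministic part $\bv^\top \bE \bv$ should, after a computation, be exactly $\frac{2}{\sqrt n}\<\sqrt n \,\bT\bv,\bv\>$ up to terms that vanish uniformly over $\sB_2(0,R)$ once one uses $\frac1n\bM^\top\bM \to \bK_{\le k}$ (Proposition \ref{prop:se-gen}) — this is where the definition $\bT = \bG(\bM^\top\bM)^{-1}\bM^\top$ comes from, and where the Onsager-coefficient bookkeeping has to be done carefully.

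Second, having reduced to $\bv^\top \widetilde{\bW}\bv$ with $\bv$ replaced by $\proj_\bM^\perp\bv$ and $\widetilde{\bW}$ a genuine $\GOE(n)$ independent of everything, I would apply Proposition \ref{lem:slepian} conditionally on $(\bM,\bG,\bg^0)$, treating $f_n(\bv,\bu;\bM,\bG,\bg^0) + \bv^\top\bE\bv$ as the ``$f$'' term and $K_n$ (which lies in $\sB_2(0,R)$) as the closed set $K$. This bounds the conditional tail of $\sup_{(\bv,\bu)\in K_n}\{\bv^\top\bW\bv + f_n\}$ by the conditional tail of $\sup_{(\bv,\bu)\in K_n}\{\frac{2}{\sqrt n}\<\|\proj_\bM^\perp\bv\|\bxi,\proj_\bM^\perp\bv\> + \bv^\top\bE\bv + f_n\}$, and since $\<\bxi,\proj_\bM^\perp\bv\> = \<\proj_\bM^\perp\bxi,\bv\>$ and $\proj_\bM^\perp\bxi \stackrel{d}{=}\bxi$ restricted suitably (or one simply absorbs the projection into the definition), this is exactly $\frac2{\sqrt n}\<\bg_\AMP(\bv),\bv\> + f_n$ up to the negligible deterministic discrepancy in the $\bE$-versus-$\bT$ comparison. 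One then integrates out the conditioning: because the bound holds for (almost) every realization of $(\bM,\bG,\bg^0)$, it holds unconditionally, and passing to $\plimsup$ on both sides — using that the deterministic error terms converge to zero in probability uniformly over $\sB_2(0,R)$ — yields the claimed inequality. The measurability of $K_n$ and continuity of $f_n$ are exactly what is needed to legitimize the conditional application of Sudakov-Fernique.

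The main obstacle I anticipate is the second half of the first step: verifying that the deterministic drift $\bv^\top\bE\bv$ produced by the AMP conditioning really does collapse to $2\<\bT\bv,\bv\>$ up to a $\plim$-negligible remainder, uniformly over $\bv \in \sB_2(0,R)$. This requires (i) writing $\bE$ explicitly in terms of $\bM$, $\bG$, and the Onsager matrix $\bB$; (ii) using the state-evolution identities — in particular that $\frac1n\bM^\top\bG$ and $\frac1n\bG^\top\bG$ and $\frac1n\bM^\top\bM$ all converge to the same $\bK_{\le k}$, together with how $\bB$ relates $\bM$ and $\bG$ — to see that the ``extra'' symmetrization and lower-triangular-$\bB$ terms cancel against each other in the quadratic form $\bv^\top\bE\bv$; and (iii) controlling the inverse $(\frac1n\bM^\top\bM)^{-1}$, which is fine once $\bK_{\le k}\succ 0$ but requires care if one wants the statement for all $k$ without that hypothesis (one can either assume $\bK_{\le k}\succ0$, as is implicit in defining $\bT$, or replace the inverse by a pseudoinverse and check the identities still go through). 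Everything else — the conditional Sudakov-Fernique application and the de-conditioning — is routine once this algebraic reduction is in hand.
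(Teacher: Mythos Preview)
Your proposal is correct and follows essentially the same approach as the paper: condition on the AMP iterates (equivalently on the linear constraints $\bW\bM = \bG + \bM\bB$), decompose $\bW$ into a deterministic part plus $\proj_\bM^\perp\widetilde\bW\proj_\bM^\perp$, apply the classical Sudakov--Fernique inequality to the fresh $\widetilde\bW$, and then show the deterministic quadratic form equals $2\langle\bT\bv,\bv\rangle$ up to a uniformly vanishing error. The paper carries out exactly the ``main obstacle'' you anticipate by introducing an auxiliary $\bB_{\SF}$ solving $\bR^\top\bS = (\bR^\top\bR)\bB_{\SF} + \bB_{\SF}^\top(\bR^\top\bR)$ exactly and proving $\|\bB_{\SF}-\bB\|_{\op}\pconv 0$ via the state-evolution limits (note: $\tfrac1n\bM^\top\bG$ does not converge to $\bK_{\le k}$ but rather to $\bK_{\le k}\bB$ by Stein's lemma, which is precisely the identity that makes the cancellation work).
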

\noindent It is important that both the optimization function $f_n$ and the constraint set $K_n$ in Theorem \ref{thm:SF-post-AMP-gen} can depend on the AMP iterates $\bM,\bG$. 
As we will see in the $\Z_2$-synchronization example,
this will be essential to applying Theorem \ref{thm:SF-post-AMP-gen} to probe local properties of an optimization around the AMP iterates.
The probabilistic suprema are taken with respect to all sources of randomness, including in the choice of constraint set $K$.
We prove Theorem \ref{thm:SF-post-AMP-gen} in Appendix \ref{sec:app-proof-of-SF-post-AMP}.

It is worth stopping here to interpret Theorem \ref{thm:SF-post-AMP-gen}.
As we have described,
we can view the matrix $\bT$ as first projecting onto the space spanned by $\{ \bbm^0,\ldots,\bbm^{k-1}\}$,
and then mapping this space onto $\{ \bg^0,\ldots,\bg^k\}$ through an approximate isometry. 
Moreover, the AMP state evolution (Proposition \ref{prop:se-gen}) implies that every vector in the span of $\{ \bg^1, \ldots,\bg^k\}$ looks Gaussian in the sense of having coordinates with approximately Gaussian empirical distribution. 
Thus, $\sqrt{n}\bT \bv$ behaves like a vector drawn from $\normal(0,\| \proj_{\bM}\bv \|^2 \id_n)$.\footnote{We are not being precise about what we mean by ``behaves like,'' though a precise statement would involve the convergence of empirical coordinate distributions.}
Because $\bxi \sim \normal(0,\id_n)$ is independent of everything else,
$\bg_{\AMP}(\bv)$ behaves like a vector drawn from $\normal(0,\| \bv \|^2 \id_n)$.
Note that the vector $\bg(\bv)$ appearing in the classical Sudakov-Fernique inequality \eqref{eq:SF} also exhibits this behavior: $\bg(\bv) = \| \bv \|\bxi$ is, for fixed $\bv$, distributed $\normal(0,\| \bv \|^2 \id_n)$.
In this sense, the classical Sudakov-Fernique inequality and the Sudakov-Fernique post-AMP inequality have a remarkably similar structure.

Nevertheless, they also have important differences.
In the classical Sudakov-Fernique inequality, $\bg(\bv)$ has a direction chosen uniformly at random and independent of everything else, but has a norm determined implicitly by the optimization variable $\bv$. 
In contrast,
in the Sudakov-Fernique post-AMP inequality, 
both the direction and the norm of $\bg_{\AMP}(\bv)$ are determined implicitly by the optimization variable $\bv$.
Whereas in the classical Sudakov-Fernique inequality, $\bg(\bv)$ lies in the 1-dimensional space spanned by the Gaussian vector $\bxi$,
in the Sudakov-Fernique post-AMP inequality, $\bg_{\AMP}(\bv)$ lies in the $(k+1)$-dimensional space spanned by the approximately Gaussian vectors $\bg^1,\ldots,\bg^k$ and the Gaussian vector $\bxi$.
Its decomposition into these components exactly corresponds to the decomposition of $\bv$ into $\bbm^0,\ldots,\bbm^{k-1}$ and its component orthogonal to their span.
In this way, the Sudakov-Fernique post-AMP inequality is able to capture relationships between the AMP iterates, the optimization function $f$, the random matrix $\bW$, and the constraints set $K$.

Finally, whereas the classical Sudakov-Fernique inequality makes a comparison which is valid for finite $n$,
the Sudakov-Fernique post-AMP inequality makes a comparison which is valid asymptotically. 
Nevertheless, the proof of Theorem \ref{thm:SF-post-AMP-gen} uses a finite sample comparison inequality similar to the Sudakov-Fernique post-AMP inequality, and which provides the more fundamental comparison.
Because it is more difficult to state and harder to motivate this inequality, we defer details to Appendix \ref{sec:app-proof-of-SF-post-AMP}.

\subsection{Sudakov-Fernique post-AMP in $\Z_2$-synchronization}

The AMP algorithms in Eqs.~\eqref{eq:AMP-FMM} and \eqref{eq:AMP-AMS} are not of the type appearing in Eq.~\eqref{eq:AMP-gen}.
Nevertheless, they are well approximated by such an iteration.
Thus, it is not surprising that a result like Theorem \ref{thm:SF-post-AMP-gen} holds for this iteration.
We present this result in this section.

In this and the next section, we will present results for the AMP algorithms and TAP free energies of \cite{fanMeiMontanari2021} and \cite{AlaouiMontanariSelke2022} in a unified way. 
In particular, we consider the iteration
\begin{equation}\label{eq:AMP-Z2}
\begin{aligned}
    \bz^0 &= \by,
    \qquad\qquad\qquad&
    \bbm^s &= \tanh(\bz^s),
    \\
    \bz^{s+1} &= \lambda \bY \bbm^s + \chi_{\sx} \by - \lambda \sbhat_s\bbm^{s-1},
    \qquad\qquad\qquad
    &
    \sbhat_s &= \lambda(1 - Q(\bbm^k)).
\end{aligned}
\end{equation}
where $\bbm \in [-1,1]^n$ and $Q(\bbm) := \| \bbm \|_2^2 / n$,
and we take $\sx \in \{ \FMM, \AMS \}$,
with $\chi_{\FMM} = 0$ and $\chi_{\AMS} = 1$.
Taking $\chi_{\FMM} = 0$ gives Eq.~\eqref{eq:AMP-FMM}, the AMP algorithm considered by \cite{fanMeiMontanari2021,celentanoFanMei2021},
and taking $\chi_{\AMS} = 1$ gives Eq.~\eqref{eq:AMP-AMS}, the AMP algorithm considered by \cite{AlaouiMontanariSelke2022}.

The state evolution for iteration \eqref{eq:AMP-Z2} consists of an infinite sequence of scalar parameters $(\gamma_s)_{s \geq 0}$,
initialized at $\gamma_0$ as in Eq.~\eqref{eq:init},
defined recursively by
\begin{equation}
\label{eq:AMP-Z2-se}
\begin{aligned}
    \gamma_{s+1} &= \lambda^2 \E[\tanh^2(\gamma_s + \sqrt{\gamma_s}G)] + \chi_{\sx} \gamma_0,
\end{aligned}
\end{equation}
where $G_s \sim \normal(0,\gamma_s)$ for $s \geq 1$.
The next proposition states how these parameters describe the behavior of the iterates $\bz^s$, $\bbm^s$.
Moreover, for future reference,
we give some useful properties of the parameters $\gamma_s$ and related parameters.
\begin{proposition}[State evolution]\label{prop:Z2-se}
    Consider $\bbm^s,\bz^s$ as defined as in \eqref{eq:AMP-Z2},
    and $(\gamma_s)_{s \geq 0}$ as defined in Eq.~\eqref{eq:AMP-Z2-se}.
    Moreover,
    for $s \geq 0$,
    define
    \begin{equation}
    \label{eq:def-gs}
        \bg^{s+1} := \lambda \bW\bbm^s - \lambda^2(1-Q(\bbm^s))\bbm^{s-1}.
    \end{equation}
    Fix $k \geq 1$, 
    and let $\bM \in \reals^{n \times k}$ be the matrix whose $s^\text{th}$ column is $\bbm^{s-1}$, 
    and $\bG_{\AMP}$ be the matrix whose $s^\text{th}$ column is $\bg^s$.
    Denote the $i^\text{th}$ row of $\bM$ and $\bG_{\AMP}$ by $\brevebm_i$ and $\brevebg_i$, respectively.
    Let $\bGamma_{\leq k} \in \S^k_{\geq0}$ be the matrix given by $\Gamma_{st} = \gamma_{s \wedge t}$.
    For $\sx = \FMM$,
    we also assume $\gamma_0 < \gamma_\infty$, defined in Eq.~\eqref{eq:def-gam-infty}.

    We have the following:

    \begin{enumerate}[(i)]

        \item 
        We have $\gamma_s > 0$ for all $s \geq 0$.
        Further, $\bGamma_{\leq k}$ is positive definite.

        \item 
        Let $\bK_{\leq k} = \bGamma_{\leq k} - \chi_{\sx} \gamma_0 \ones\ones^\top$.
        Then $\bK_{\leq k}$ is positive definite.
        Further, let $X$, 
        $\brevebM = (M_0,\ldots,M_{k-1})$ and $\brevebG = (G_1,\ldots,G_k)$ be random vectors in $\reals^k$ with distribution given by 
        $\brevebG \sim \normal(0,\bK_{\leq k})$, 
        $W \sim \normal(0,1)$ independent, 
        and $M_0 = \tanh(\gamma_0X + \sqrt{\gamma_0}W)$, $M_s = \tanh(\gamma_sX + G_s + \chi_{\sx} \sqrt{\alpha}W)$.
        Then
        the AMP iterates converge almost surely in Wasserstein sense:
        \begin{equation}
            \frac1n \sum_{i=1}^n \delta_{\brevebm_i,\brevebg_i,w_i} 
                \Wconv
                (\brevebM,\brevebG,W).
        \end{equation}
        Moreover,
        \begin{equation}
            \E[\brevebG \brevebG^\top]
            =
            \lambda^2 \E[\brevebM\brevebM^\top]
            =
            \bK_{\leq k}.
        \end{equation}

        \item 
        We have for $s,t \geq 0$,
        \begin{equation}
        \begin{aligned}
            \lambda^2 \E[\tanh(\gamma_s + \indic{s > 0}G_s + \chi_{\sx} \sqrt{\gamma_0} G_0)\tanh(\gamma_t + \indic{t > 0}G_t + \chi_{\sx} \sqrt{\gamma_0} G_0)]
            &=
            \lambda^2 \E[\tanh(\gamma_{s \wedge t} + \sqrt{\gamma_{s \wedge t}}G)]
        \\
            &= \gamma_{(s \wedge t) + 1} - \chi_{\sx} \gamma_0,
        \end{aligned}
        \end{equation}
        where $G_0,G_s,G_t$ are distributed as in the previous item, and $G \sim \normal(0,1)$.

        \item 
        There exists $\gamma_\infty \in (0,1)$ such that $\gamma_s \rightarrow \gamma_\infty$ as $s \rightarrow \infty$. 
        The parameter $\gamma_\infty$ solves
        \begin{equation}
        \label{eq:def-gam-infty}
            \gamma_\infty = \lambda^2\E\big[\tanh^2(\gamma_\infty + \sqrt{\gamma_\infty} G)\big] + \chi_{\sx} \gamma_0,
        \end{equation}
        where $G \sim \normal(0,1)$.

        \item 
        Let $q_\infty = \E\big[\tanh^2(\gamma_\infty + \sqrt{\gamma_\infty} G)\big]$
        and $b_\infty = \E\big[\tanh^4(\gamma_\infty + \sqrt{\gamma_\infty} G)\big]$.
        We have the identities:
        \begin{equation}
        \begin{gathered}
            \lambda^2(1- q_\infty) < 1,
            \\
            q_\infty = \E\big[\tanh^2(\gamma_\infty + \sqrt{\gamma_\infty}G)\big] = \E\big[\tanh(\gamma_\infty + \sqrt{\gamma_\infty}G)\big],
            \\
            b_\infty = \E\big[\tanh^4(\gamma_\infty + \sqrt{\gamma_\infty}G)\big] = \E\big[\tanh^3(\gamma_\infty + \sqrt{\gamma_\infty}G)\big].
        \end{gathered}
        \end{equation}

    \end{enumerate}

\end{proposition}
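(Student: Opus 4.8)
The plan is to build this proposition from the standard AMP state-evolution machinery together with a short list of exact identities coming from the Bayes-optimal structure of the recursion; no individual step is hard, but several ingredients must be combined. For (i)--(ii): the iteration \eqref{eq:AMP-Z2} is not literally of the form \eqref{eq:AMP-gen}, but working conditionally on $\bx$ and writing $\bY\bbm^s = \tfrac{\lambda}{n}\langle\bx,\bbm^s\rangle\,\bx + \bW\bbm^s$ shows the spike and the side information $\by=\gamma_0\bx+\sqrt{\gamma_0}\bg$ enter only through the two non-random directions $\bx,\bg$ (which are independent of $\bW$). Replacing the data-dependent coefficient $\sbhat_s=\lambda(1-Q(\bbm^s))$ by its deterministic state-evolution limit and absorbing the $\bbm^{-1}$ base-case term in \eqref{eq:def-gs} turns \eqref{eq:AMP-Z2} into a genuine instance of \eqref{eq:AMP-gen} (with two extra non-random inputs), whose iterates agree with those of \eqref{eq:AMP-Z2} up to $o(1)$ in empirical-Wasserstein sense by the usual AMP stability/continuity arguments \cite{bayatiMontanari2010,javanmardMontanari2013,deshpande2016}. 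Proposition \ref{prop:se-gen} (in its version allowing non-random side information) then gives the joint Wasserstein convergence of $\tfrac1n\sum_i\delta_{\brevebm_i,\brevebg_i,w_i}$ to some Gaussian-plus-$\tanh$ limit; identifying this limit with $(\brevebM,\brevebG,W)$ for $X\sim\Unif\{-1,+1\}$ and the covariances with $\bGamma_{\leq k}$, $\bK_{\leq k}$ is a direct computation from \eqref{eq:se-gen}--\eqref{eq:def-gs} and \eqref{eq:AMP-Z2-se}. Positive-definiteness of $\bGamma_{\leq k}$ (hence of $\bK_{\leq k}=\lambda^2\E[\brevebM\brevebM^\top]$) reduces, since $\Gamma_{st}=\gamma_{s\wedge t}$ is a sum of independent-increment rank-one covariances, to $\gamma_s>0$ and strict monotonicity $\gamma_s<\gamma_{s+1}$, established below.

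The algebraic core --- and the source of (iii) and (v) --- is that the limiting variable $M_s=\tanh(\gamma_s X + G_s + \chi_{\sx}\sqrt{\gamma_0}W)$ is the Bayes posterior mean $\E[X\mid\cF_s]$ of $X\sim\Unif\{-1,+1\}$ for an increasing filtration $\cF_0\subseteq\cF_1\subseteq\cdots$, with $\cF_s$ carrying a Gaussian-channel observation of $X$ at signal-to-noise ratio $\gamma_s$: the scalar channel $Y=\gamma X+\sqrt\gamma G$ has $\E[X\mid Y]=\tanh Y$, and the joint law prescribed by $K_{st}=\gamma_{s\wedge t}-\chi_{\sx}\gamma_0$ is exactly the independent-increments structure realizing $M_s=\E[X\mid\cF_s]$ for a filtration. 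From this I would deduce: (a) $\E[X M_s]=\E[M_s^2]$ by the tower property, which --- after using $X\in\{-1,+1\}$ and $G\stackrel{d}{=}-G$ to merge the signal coefficient and the variance of $\bz^s$'s limit into one number --- simultaneously gives the self-consistency of \eqref{eq:AMP-Z2-se} and the identity $\E[\tanh(\gamma+\sqrt\gamma G)]=\E[\tanh^2(\gamma+\sqrt\gamma G)]$, the first line of (v); (b) $\E[M_sM_t]=\E[M_{s\wedge t}^2]$ for $s\le t$, again by the tower property for $\cF_s\subseteq\cF_t$, which after the same $\pm1$-symmetrization is exactly (iii) (the left side reduces to $\lambda^2\E[\tanh(\gamma_{s\wedge t}+\sqrt{\gamma_{s\wedge t}}G)]$, then via \eqref{eq:AMP-Z2-se} and (a) to $\gamma_{(s\wedge t)+1}-\chi_{\sx}\gamma_0$); (c) the quartic identity $\E[\tanh^4(\gamma+\sqrt\gamma G)]=\E[\tanh^3(\gamma+\sqrt\gamma G)]$ by introducing four conditionally i.i.d.\ posterior replicas $X^{(1)},\dots,X^{(4)}$ and applying the Nishimori identity $(X^{(1)},\dots,X^{(4)})\stackrel{d}{=}(X,X^{(1)},X^{(2)},X^{(3)})$, the second line of (v).

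It remains to analyze the scalar recursion $\gamma_{s+1}=F(\gamma_s)+\chi_{\sx}\gamma_0$ with $F(\gamma):=\lambda^2\E[\tanh^2(\gamma+\sqrt\gamma G)]$, which yields (iv) and the leftover monotonicity and positivity. Using $F(\gamma)=\lambda^2\E[\tanh(\gamma+\sqrt\gamma G)]$ (from (a)) and Gaussian integration by parts, one checks $F$ is smooth, strictly increasing and strictly concave on $[0,\infty)$, with $F(0)=0$, $F'(0)=\lambda^2$, and $F\le\lambda^2$. Hence $\gamma\mapsto F(\gamma)+\chi_{\sx}\gamma_0$ has a unique positive fixed point $\gamma_\infty$ (for $\sx=\FMM$ this requires $\lambda>1$, which is implicit in the hypothesis $\gamma_0<\gamma_\infty$; for $\sx=\AMS$ it uses $\gamma_0>0$), the iteration is monotone and bounded hence converges to $\gamma_\infty$, and by strict concavity $0<\gamma_0\le\gamma_s<\gamma_{s+1}<\gamma_\infty$ in the $\FMM$ case (and the analogue in the $\AMS$ case); in particular $\gamma_s>0$ for all $s$. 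The remaining bounds $\gamma_\infty\in(0,1)$ and $\lambda^2(1-q_\infty)<1$ are the replica-symmetric (AT-type) stability bounds for this model.

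\textbf{Main obstacle.} I expect two places to cost the most effort. The first is the routine-but-fiddly reduction of \eqref{eq:AMP-Z2} --- with its side information re-injected at every step, its data-dependent Onsager coefficient, and its base case --- to a bona fide instance of \eqref{eq:AMP-gen}, while carrying the extra coordinates $x_i,g_i,w_i$ through the Wasserstein limit. The second is the bound $\lambda^2(1-q_\infty)<1$: unlike the fixed-point stability $F'(\gamma_\infty)<1$ (which does follow from strict concavity of $F$), one has $\lambda^2(1-q_\infty)=F'(\gamma_\infty)+\lambda^2(q_\infty-b_\infty)\ge F'(\gamma_\infty)$, so the desired inequality is strictly stronger; indeed a short expansion near $\lambda=1$ shows the leading terms cancel, $\lambda^2(1-q_\infty)=1-\tfrac23(1-\lambda^{-2})^2+O((\lambda-1)^3)$, so establishing it uses finer properties of the Bayes-optimal $\Z_2$ recursion, and here I would lean on the analyses of \cite{deshpande2016,celentanoFanMei2021}, which study the same recursion, rather than re-derive it from scratch.
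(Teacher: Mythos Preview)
Your proposal is correct and follows essentially the same route as the paper: reduce \eqref{eq:AMP-Z2} to the general AMP framework of \cite{bayatiMontanari2010,javanmardMontanari2013} to get the Wasserstein limit in (ii), exploit the Bayes-optimal/Nishimori structure of the scalar channel for the moment identities in (iii) and (v), and analyze the scalar map $\gamma\mapsto F(\gamma)+\chi_{\sx}\gamma_0$ via monotonicity and concavity for (i) and (iv). The paper mostly defers these pieces to \cite{deshpande2016,celentanoFanMei2021,AlaouiMontanariSelke2022}, whereas you spell out the tower-property/replica arguments explicitly; these are equivalent.

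The one place where the paper does something you did not anticipate is precisely the bound $\lambda^2(1-q_\infty)<1$, which you correctly flag as strictly stronger than fixed-point stability and identify as your main obstacle. Instead of a perturbative or fixed-point argument, the paper uses a two-line decision-theoretic trick: in the scalar channel $Y=\gamma_\infty X+\sqrt{\gamma_\infty}\,G$ with $X\sim\Unif\{-1,+1\}$, the Bayes estimator $\tanh(Y)$ has risk $1-q_\infty$, while the linear estimator $(1+\gamma_\infty)^{-1}Y$ has risk $1/(1+\gamma_\infty)\le 1/(1+\lambda^2 q_\infty)$, the last inequality coming from $\gamma_\infty\ge\lambda^2 q_\infty$ via the fixed-point equation \eqref{eq:def-gam-infty}. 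Since $\tanh(Y)$ and $(1+\gamma_\infty)^{-1}Y$ differ on a set of positive measure, the Bayes risk is strictly smaller, so $1-q_\infty<1/(1+\lambda^2 q_\infty)$, which rearranges (using $q_\infty>0$) to $\lambda^2(1-q_\infty)<1$. This sidesteps your concern entirely and avoids any appeal to the delicate small-$(\lambda-1)$ expansion.
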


We now provide the Sudakov-Fernique post-AMP inequality in the context of the AMP algorithm \eqref{eq:AMP-Z2} in the $\Z_2$-synchronization model.
\begin{corollary}[Sudakov-Fernique post-AMP in $\Z_2$-synchronization]
\label{cor:SF-post-AMP-Z2}
    Consider the data generating process in Eqs.~\eqref{eq:obs} and \eqref{eq:init},
    and the AMP iteration in Eq.~\eqref{eq:AMP-Z2}.
    Let $\bM \in \reals^{n \times k}$ be the matrix whose $s^\text{th}$ column is $\bbm^{s-1}$,
    $\bG \in \reals^{n \times k}$ be the matrix whose $s^\text{th}$ column is $\bg^s$ as defined in Eq.~\eqref{eq:def-gs},
    and $\bT = \lambda^{-1}\bG (\bM^\top \bM)^{-1} \bM^\top$.
    Let $\bxi \sim \normal(0,\id_n)$ independent of everything else,
    and 
    \begin{equation}
        \bg_{\AMP}(\bv)
            := 
            \sqrt{n} \bT \bv 
            + 
            \| \proj_{\bM}^\perp \bv \| \bxi.
    \end{equation}    
    For some $R > 0$ and $m \in \Z_{\geq 0}$,
    let $K_n = K_n(\bM,\bG)$ be a subset of $\reals^{n + m}$ which may depend on the AMP iterates $\bM,\bG$, 
    and let $f_n: \reals^{n+m} \times (\reals^{n \times k})^2$ be a continuous function.
    Then,
    \begin{equation}
        \plimsup_{n \rightarrow \infty} 
        \sup_{(\bv,\bu) \in K_n} \big\{ \bv^\top \bW \bv + f_n(\bv,\bu;\bM,\bG,\bg^0,\bx) \big\}
        \leq 
        \plimsup_{n \rightarrow \infty} 
        \sup_{(\bv,\bu) \in K_n} \Big\{ \frac{2}{\sqrt{n}}\< \bg_{\AMP}(\bv) , \bv \> + f_n(\bv,\bu;\bM,\bG,\bg^0,\bx) \Big\}.
    \end{equation}
\end{corollary}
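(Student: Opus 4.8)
The plan is to deduce Corollary~\ref{cor:SF-post-AMP-Z2} from the \emph{argument} behind Theorem~\ref{thm:SF-post-AMP-gen}, by observing that the iteration \eqref{eq:AMP-Z2}, although not literally of the form \eqref{eq:AMP-gen}, possesses exactly the structural features that proof uses. First I would substitute $\bY = \tfrac{\lambda}{n}\bx\bx^\top + \bW$ into \eqref{eq:AMP-Z2}; using $\sbhat_s = \lambda(1-Q(\bbm^s))$ together with the definition \eqref{eq:def-gs} of $\bg^{s+1}$, this yields the exact identity
\[
    \bz^{s+1} \;=\; \bg^{s+1} \;+\; \frac{\lambda^2}{n}\langle\bx,\bbm^s\rangle\,\bx \;+\; \chi_{\sx}\by,
    \qquad \bbm^{s+1} = \tanh(\bz^{s+1}),
\]
so that each coordinate of $\bbm^{s+1}$ is a fixed $1$-Lipschitz function of the corresponding coordinate of $\bg^{s+1}$, of the scalar overlap $\tfrac1n\langle\bx,\bbm^s\rangle$, and of the fixed auxiliary data $(x_i,g_i)$ --- recall $\by = \gamma_0\bx + \sqrt{\gamma_0}\bg$. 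Inductively, $\bbm^s$ is then measurable with respect to $(\bg^1,\dots,\bg^s,\bx,\bg)$, while $\bg^{s+1} = \lambda\bW\bbm^s - \lambda^2(1-Q(\bbm^s))\bbm^{s-1}$ has its ``Onsager'' term measurable with respect to $(\bbm^0,\dots,\bbm^s)$ (with $\bbm^{-1}$ taken to be zero). This is exactly the structure the proof of Theorem~\ref{thm:SF-post-AMP-gen} exploits, and the remaining discrepancies with \eqref{eq:AMP-gen} are inessential: the overall factor $\lambda$ multiplying $\bW$ is removed by rescaling $\bbm^s\mapsto\lambda\bbm^s$, which is precisely the origin of the factor $\lambda^{-1}$ in $\bT = \lambda^{-1}\bG(\bM^\top\bM)^{-1}\bM^\top$; the side vectors $\bx,\bg$ (and hence $\by$) are independent of $\bW$ and merely ride along, which is why the Corollary permits $f_n$ to depend additionally on $\bx$; and the fact that the memory coefficient here is the empirical $Q(\bbm^s)$ rather than a state-evolution constant plays no role in the conditioning step, since the state evolution enters the argument only through Proposition~\ref{prop:Z2-se}, which is the state evolution for this very iteration. (Equivalently one could first massage \eqref{eq:AMP-Z2} into the exact form \eqref{eq:AMP-gen}, absorbing the rank-one spike through its concentrating overlap and the data-dependent coefficient through a standard AMP perturbation argument, and then invoke Theorem~\ref{thm:SF-post-AMP-gen} as a black box; I find it cleaner to re-run the argument, which avoids these approximations by conditioning directly on the actual iterates.)

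With this in hand, I would run the argument of Theorem~\ref{thm:SF-post-AMP-gen} essentially verbatim. Condition on the AMP iterates --- equivalently, on $(\bx,\bg,\bM,\bG)$, equivalently on $(\bx,\bg,\bg^1,\dots,\bg^k)$. Under this conditioning $\bW$ satisfies the $k$ linear constraints $\bW\bbm^{s-1} = \lambda^{-1}\bg^{s} + \lambda\bigl(1-Q(\bbm^{s-1})\bigr)\bbm^{s-2}$, $s=1,\dots,k$, i.e.\ $\bW\bM = \bA$ for a matrix $\bA$ measurable with respect to $(\bM,\bG)$; so, by the Gaussian conditioning lemma for $\GOE$ matrices used to prove Theorem~\ref{thm:SF-post-AMP-gen}, conditionally $\bW \stackrel{d}{=} \bD + \proj_{\bM}^\perp\widetilde\bW\proj_{\bM}^\perp$, where $\bD$ is symmetric, supported on $\range(\bM)$, and determined by the conditioning, and $\widetilde\bW\sim\GOE(n)$ is independent of $(\bM,\bG,\bx,\bg)$. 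Here one needs $\bM$ of full column rank, which holds with asymptotically-high probability since $\tfrac1n\bM^\top\bM \to \lambda^{-2}\bK_{\leq k}$, positive definite by Proposition~\ref{prop:Z2-se}(i)--(ii). Then $\bv^\top\bW\bv = \bv^\top\bD\bv + (\proj_{\bM}^\perp\bv)^\top\widetilde\bW(\proj_{\bM}^\perp\bv)$, and applying the classical Sudakov--Fernique inequality (Proposition~\ref{lem:slepian}), conditionally, to the second term --- with $\bv^\top\bD\bv + f_n$ in the role of the deterministic function ``$f(\bu,\bv)$'' there --- upper-bounds the left-hand side of the Corollary by $\plimsup_n \sup_{(\bv,\bu)\in K_n}\bigl\{\bv^\top\bD\bv + \tfrac{2}{\sqrt n}\|\proj_{\bM}^\perp\bv\|\langle\bxi,\bv\rangle + f_n\bigr\}$.

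It then remains to show $\bv^\top\bD\bv = 2\bv^\top\bT\bv + o(1)$, uniformly over the bounded set $K_n\subseteq\sB_2(0,R)$, since $\tfrac{2}{\sqrt n}\langle\sqrt n\,\bT\bv,\bv\rangle = 2\bv^\top\bT\bv$. Expanding $\bD$ in terms of $\bA$ and $\bM$ and using $\bW\bbm^{s-1} = \lambda^{-1}\bg^{s} + (\text{a term in }\range(\bM))$, the quadratic form $\bv^\top\bD\bv$ reduces to an expression in the inner products $\tfrac1n\langle\bbm^s,\bbm^t\rangle$, $\tfrac1n\langle\bbm^s,\bg^t\rangle$, $\tfrac1n\langle\bg^s,\bg^t\rangle$, whose limits are furnished by Proposition~\ref{prop:Z2-se}(ii)--(iii); one obtains $2\bv^\top\bT\bv + o(1)$ with $\bT = \lambda^{-1}\bG(\bM^\top\bM)^{-1}\bM^\top$. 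This is the same computation that appears in the proof of Theorem~\ref{thm:SF-post-AMP-gen}, the only modification being the factor $\lambda^{-1}$ carried by $\bA$, so I would not grind through it again. Consequently $\bv^\top\bD\bv + \tfrac{2}{\sqrt n}\|\proj_{\bM}^\perp\bv\|\langle\bxi,\bv\rangle = \tfrac{2}{\sqrt n}\langle\bg_{\AMP}(\bv),\bv\rangle + o(1)$ uniformly on $\sB_2(0,R)$; passing to $\plimsup_{n\to\infty}$, the $o(1)$ washes out and the bound of the previous paragraph becomes the right-hand side of the Corollary, as desired.

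I expect the drift identification of the last paragraph --- matching $\lim_n\bv^\top\bD\bv$ to $2\bv^\top\bT\bv$ uniformly over the growing-dimensional, iterate-dependent constraint set while keeping the factors of $\lambda$ straight --- to be the main obstacle, and the place where one relies most heavily on Proposition~\ref{prop:Z2-se} and on the corresponding step inside the proof of Theorem~\ref{thm:SF-post-AMP-gen}. In contrast, the Gaussian conditioning lemma, the full-rankness of $\bM$, the harmlessness of the auxiliary data and of the empirical Onsager coefficient, and the passage to the $\plimsup$ should all be routine.
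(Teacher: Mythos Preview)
Your proposal is correct and follows essentially the same approach as the paper: condition on the AMP iterates (equivalently on linear constraints $\bW\bM = \bS$ for $\bS$ determined by $\bM,\bG$), apply the conditional Sudakov--Fernique inequality, and then use Proposition~\ref{prop:Z2-se} to show the deterministic drift part agrees with $2\bv^\top\bT\bv$ up to $o(1)$. The paper organizes this last step slightly differently --- it packages the Onsager coefficients into an upper-triangular matrix $\bB$ with entries $\sbhat_s$, defines $\bR_{\AMP} = \lambda\bM$, $\bS_{\AMP} = \bG + \lambda\bM\bB$, and shows $\|\bB_{\SF}-\bB\|_{\op}\to 0$ via $\bB\pconv\tilde\bB$ and the state-evolution limits of $\tfrac1n\bM^\top\bM$ and $\tfrac1n\bM^\top\bG$ --- but this is exactly your ``drift identification'' in different notation.
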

\noindent As we show in the next section, Corollary \ref{cor:SF-post-AMP-Z2} is the central tool in establishing the local strong convexity of the TAP free energy.
The interpretation of this inequality agrees with the interpretation described in the discussion following Theorem \ref{thm:SF-post-AMP-gen}.
We prove Corollary \ref{cor:SF-post-AMP-Z2} in Appendix \ref{sec:app-proof-of-SF-post-AMP}.

\section{Local convexity of the TAP free energy}
\label{sec:main}

We expect the Sudakov-Fernique post-AMP inequality to be a generally useful tool for probing local properties of an optimization landscape around the iterates of an AMP algorithm.
We illustrate this by establishing the local strong convexity of the TAP free energy after several iterations of the AMP algorithm \eqref{eq:AMP-Z2} in the $\Z_2$-synchronization model.
As with the AMP algorithms, we will analyze the TAP free energies in Eqs.~\eqref{eq:TAP-1} and \eqref{eq:TAP-2} in a unified way.
Thus,
for $\sx \in \{ \FMM, \AMS \}$, 
define the TAP free energy
\begin{equation}
    \cF_{\sx}(\bbm) 
        := 
        -\frac{\lambda}{2n} \bbm^\top\bY\bbm
        -\cL_{\sx}(\bbm).
\end{equation} 
where
\begin{equation}
\begin{aligned}
    \cL_{\FMM}(\bbm)
        &:= 
        \frac1n \sum_{i=1}^n \sh(m_i) 
        +\frac{\lambda^2}4(1 - Q(\bbm))^2,
    \\
    \cL_{\AMS}(\bbm)
        &:= 
        \frac1n \sum_{i=1}^n \sh(m_i) 
        +
        \frac{\< \by , \bbm\>}{n}
        +
        \frac{\lambda^2(1-q_\infty)(1+q_\infty-2Q(\bbm))}4,
\end{aligned}
\end{equation}
for $\bbm \in [-1,1]^n$ and $\sh(m) := -\frac{1+m}{2}\log\frac{1+m}{2} - \frac{1-m}2 \log\frac{1-m}2$ the binary entropy function.
Taking $\sx = \FMM$ gives the TAP free energy in Eq.~\eqref{eq:TAP-1},
and $\sx = \AMS$ gives the TAP free energy in Eq.~\eqref{eq:TAP-2}.

Note the TAP free energy is $c/n$-strongly convex in an $\epsilon$-ball around the $(k-1)^\text{st}$ AMP iterate if and only if
\begin{equation}
    \inf_{ \substack{\| \bv \|_2 = 1 \\ \| \bu - \bbm^{k-1} \|/\sqrt{n} \leq \epsilon } }
    \bv^\top \nabla^2 \cF_{\sx} (\bu) \bv
    > c/n.
\end{equation}
Our next theorem establishes that for large enough $k$, this occurs with high-probability.
\begin{theorem}[Local convexity]
\label{thm:local-strong-cvx}
    Consider data generated from the $\Z_2$-synchronization model Eqs.~\eqref{eq:obs} and \eqref{eq:init}.
    Consider $\sx \in \{\FMM,\AMS\}$.
    We have the following.
    \begin{enumerate}[(i)]

        \item 
        If either (1) $\sx = \FMM$ and $\lambda > 1$, $\gamma_\infty > \gamma_0 > 0$, or (2) $\sx = \AMS$ and $\lambda > 0$, $\gamma_0 > 0$,
        then
        there exists $c,c' > 0$ (depending on $\lambda,\gamma_0$) such that,
        for any $\epsilon < c'$,
        and any sufficiently large $k$ (where ``sufficiently large'' may depend on $\lambda,\gamma_0,\epsilon$),
        \begin{equation}
        \label{eq:local-cvx-hp}
            \pliminf_{n \rightarrow \infty} 
            \inf_{ \substack{\| \bv \|_2 = 1 \\ \| \bu - \bbm^{k-1} \|/\sqrt{n} \leq \epsilon } }
                \bv^\top \nabla^2 \cF_{\sx} (\bu) \bv
                > c/n, 
        \end{equation}
        where $\bbm^{k-1} $ is defined via Eq.~\eqref{eq:AMP-Z2} with $\chi_{\sx}$ set according to $\sx$.

        \item 
        Under the conditions of item (i),
        there exists $c,c' > 0$ such that for any $\epsilon < c'$, there exists large enough $k$ such that, with probability approaching $1$ as $n \rightarrow \infty$,
        the TAP free energy contains a unique stationary point on $\{ \| \bu - \bbm^{k-1} \|/\sqrt{n} \leq \epsilon \} \cap [-1,1]^n$.
        This stationary point in fact lies in $(-1,1)^n$.
    \end{enumerate}

\end{theorem}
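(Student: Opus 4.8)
\emph{Proof sketch.} The plan is to deduce part (i) from the Sudakov--Fernique post-AMP inequality (Corollary~\ref{cor:SF-post-AMP-Z2}) and then to obtain part (ii) from part (i) together with the near-stationarity of late AMP iterates.

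\emph{Reduction of part (i).} A direct calculation of the Hessian --- using $\sh''(m)=-1/(1-m^2)$, $\nabla Q(\bbm)=(2/n)\bbm$, and $\bY=\tfrac\lambda n\bx\bx^\top+\bW$ --- gives, for $\bu$ in the interior of $[-1,1]^n$,
\begin{equation}
    n\nabla^2\cF_{\sx}(\bu)=-\lambda\bW-\frac{\lambda^2}{n}\bx\bx^\top+\ddiag\!\Big(\frac{1}{1-u_i^2}\Big)_{i\le n}+\bR_{\sx}(\bu),
\end{equation}
with $\bR_{\FMM}(\bu)=\lambda^2(1-Q(\bu))\Id-\tfrac{2\lambda^2}{n}\bu\bu^\top$ and $\bR_{\AMS}(\bu)=\lambda^2(1-q_\infty)\Id$. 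Hence it suffices to show that, for suitable $c>0$,
\begin{equation}
    \plimsup_{n\to\infty}\ \sup_{\substack{\|\bv\|=1\\ \|\bu-\bbm^{k-1}\|\le\epsilon\sqrt n,\ \bu\in(-1,1)^n}}\Big\{\lambda\,\bv^\top\bW\bv+\frac{\lambda^2}{n}\<\bx,\bv\>^2-\sum_{i=1}^n\frac{v_i^2}{1-u_i^2}-\bv^\top\bR_{\sx}(\bu)\bv\Big\}<-c.
\end{equation}
Rescaling $\bv$ so that the coefficient of $\bv^\top\bW\bv$ is $1$, the left side is of exactly the form handled by Corollary~\ref{cor:SF-post-AMP-Z2}: the constraint set is admissible (it may depend on $\bM,\bG$), and the remaining terms form a continuous $f_n(\bv,\bu;\bM,\bG,\bg^0,\bx)$; the blow-up of $1/(1-u_i^2)$ near $\partial[-1,1]^n$ is dealt with by a routine truncation ($|u_i|\le 1-\eta$, then $\eta\downarrow0$), which I suppress. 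Applying the corollary replaces $\bv^\top\bW\bv$ by $\tfrac{2}{\sqrt n}\<\bg_{\AMP}(\bv),\bv\>$, with $\bg_{\AMP}(\bv)=\sqrt n\,\bT\bv+\|\proj_\bM^\perp\bv\|\,\bxi$.

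\emph{Reduction of the Gaussianized problem to a finite-dimensional one.} Write $\bv=\bM\bc+\bq$ with $\bc\in\reals^k$ and $\bq=\proj_\bM^\perp\bv$, so that $\sqrt n\,\bT\bv=(\sqrt n/\lambda)\bG\bc$ and the Gaussianized objective depends on $\bv$ only through $\bc$, $\bq$, $\bu$, and the empirical distribution of the rows of $(\bM,\bG,\bx,\bxi)$ together with $\bu$. By Proposition~\ref{prop:Z2-se}(ii) and the independence of $\bxi\sim\normal(0,\Id)$, this empirical distribution converges to the explicit law given there; a uniform (over $\bc$, $\bq$, $\bu$) Lipschitz-and-concentration argument --- of a kind standard in the AMP literature and in \cite{celentanoFanMei2021} --- then replaces the random supremum by a deterministic optimization in which $\bc$ ranges over $\reals^k$, the orthogonal part enters through its norm and, via its interaction with $\ddiag(1/(1-u_i^2))$ (whose empirical law is known), through the conditional law of its coordinates given the rows of $\bM$ --- the ``Gordon-type'' step, carried out by completing the square against the fresh Gaussian $\bxi$ --- and $\bu$ enters through a displacement profile of $\ell_2$-radius $\le\epsilon$. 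This is precisely the step that exploits the conditioning on \emph{all} of $\bbm^0,\dots,\bbm^{k-1}$: only revealing the action of $\bW$ along the entire AMP trajectory pins the local landscape near $\bbm^{k-1}$ down, and for small $k$ the post-AMP bound is too weak (indeed non-negative). Carrying out this reduction --- controlling the supremum over the $\bu$-ball and over the high-dimensional orthogonal directions uniformly --- is the main obstacle.

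\emph{Evaluation of the limit, and part (ii).} Taking $k\to\infty$ and then $\epsilon\downarrow0$: as $k\to\infty$ the state-evolution parameters converge ($\gamma_s\to\gamma_\infty$, $Q(\bbm^s)\to q_\infty$, the relevant fourth moment to $b_\infty$), and the deterministic optimization collapses, for $\sx=\FMM$, to exactly the low-dimensional variational problem already analyzed in \cite{celentanoFanMei2021}, and, for $\sx=\AMS$, to its evident analogue with $\lambda^2(1-q_\infty)\Id$ in place of the $\FMM$ correction. Its value is strictly negative: here one uses the identities of Proposition~\ref{prop:Z2-se}(v) --- in particular $\lambda^2(1-q_\infty)<1$, $q_\infty=\E[\tanh(\gamma_\infty+\sqrt{\gamma_\infty}G)]$, $b_\infty=\E[\tanh^3(\gamma_\infty+\sqrt{\gamma_\infty}G)]$ --- and for $\sx=\FMM$ this negativity is precisely the conclusion of \cite{celentanoFanMei2021}, the $\sx=\AMS$ computation being essentially identical. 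Since the objective is Lipschitz in the displacement profile, strict negativity at $\epsilon=0$ persists for $\epsilon$ below some $c'>0$, and then for all sufficiently large $k$ the post-AMP bound lies below $-c$; this gives part (i), i.e.\ $\bv^\top\nabla^2\cF_{\sx}(\bu)\bv\ge c/n$ for all unit $\bv$ and all $\bu$ in the $\epsilon$-ball, with probability $\to1$. For part (ii), using $\arctanh(\bbm^{k-1})=\bz^{k-1}=\lambda\bY\bbm^{k-2}+\chi_{\sx}\by-\lambda\sbhat_{k-2}\bbm^{k-3}$ and substituting into $\nabla\cF_{\sx}$ (the $\by$-terms cancelling when $\sx=\AMS$) yields
\begin{equation}
    n\nabla\cF_{\sx}(\bbm^{k-1})=\lambda\bY(\bbm^{k-2}-\bbm^{k-1})+\lambda^2\big[(1-\rho_{\sx})\bbm^{k-1}-(1-Q(\bbm^{k-2}))\bbm^{k-3}\big],
\end{equation}
with $\rho_{\FMM}=Q(\bbm^{k-1})$, $\rho_{\AMS}=q_\infty$. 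Since $\|\bbm^{k-1}-\bbm^{k-2}\|/\sqrt n$, $\|\bbm^{k-1}-\bbm^{k-3}\|/\sqrt n$ and $|Q(\bbm^{k-1})-Q(\bbm^{k-2})|$ all tend to $0$ as $k\to\infty$ (state evolution, $\gamma_s\to\gamma_\infty$) while $\|\bY\|_\op$ stays bounded, we get $\|\nabla\cF_{\sx}(\bbm^{k-1})\|\le\delta_k/\sqrt n$ with $\delta_k\to0$, with probability $\to1$. Combining with part (i): the minimizer $\bbm^\star$ of $\cF_{\sx}$ over the convex set $\{\|\bu-\bbm^{k-1}\|\le\epsilon\sqrt n\}\cap[-1,1]^n$ satisfies $\tfrac{c}{2n}\|\bbm^\star-\bbm^{k-1}\|^2\le\<\nabla\cF_{\sx}(\bbm^{k-1}),\bbm^{k-1}-\bbm^\star\>$, hence $\|\bbm^\star-\bbm^{k-1}\|/\sqrt n\le(2/c)\delta_k<\epsilon$ for $k$ large, so $\bbm^\star$ lies strictly inside the $\epsilon$-ball and is a stationary point; strict convexity gives uniqueness. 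Finally $\bbm^\star\in(-1,1)^n$ because the directional derivative of $\cF_{\sx}$ pointing inward from any point of $\partial[-1,1]^n$ is $-\infty$ (the term $-\tfrac1n\sum_i\sh(m_i)$ contributes $\tfrac1n\arctanh(m_i)\to\pm\infty$ there), so no minimizer can lie on $\partial[-1,1]^n$.
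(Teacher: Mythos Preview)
Your approach is essentially the same as the paper's: compute the Hessian, apply the Sudakov--Fernique post-AMP inequality to replace $\bv^\top\bW\bv$ by $\tfrac{2}{\sqrt n}\<\bg_{\AMP}(\bv),\bv\>$, reduce the resulting Gaussianized supremum to a deterministic variational problem via state evolution, send $k\to\infty$ and $\epsilon\to 0$, and then show the limiting low-dimensional problem has strictly negative value by the computation already in \cite{celentanoFanMei2021}; part (ii) is handled identically, via smallness of $\nabla\cF_{\sx}(\bbm^{k-1})$ combined with the strong convexity from part (i).

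The one step you gloss over that the paper makes explicit (and which is arguably the paper's main technical point) is the dimensionality reduction \emph{before} taking $k\to\infty$. In your sketch the deterministic objective after the Lipschitz-and-concentration step still depends on $\bc\in\reals^k$, and you appeal to the $k\to\infty$ limit to ``collapse'' it. The paper instead observes that because $\E[\brevebG\brevebG^\top]=\lambda^2\E[\brevebM\brevebM^\top]=\bK_{\leq k}$, the variable $G_{\AMP}$ already decomposes as $\tfrac{\<M_k,V\>}{\|M_k\|}\tfrac{G_k}{\|G_k\|}+\|\proj_{M_k}^\perp V\|\,\Xi^{(1)}$ for a fresh standard Gaussian $\Xi^{(1)}$, so the objective for every finite $k$ is a function of a distribution on a \emph{fixed} $6$-dimensional space. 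This makes the $k\to\infty$ limit a straightforward continuity statement rather than a limit of problems of growing dimension. Once this reduction is in place, the paper introduces Lagrange multipliers $(\alpha_\rho,\alpha_u,\alpha_v)$ for the constraints $\rho=\<M_\infty,V\>/\|M_\infty\|$, $u=\E[V]$, $\|V\|^2=1$, completes the square in $V$ (your ``Gordon-type step''), and reduces to a $5$-variable max--min whose negativity is checked by an explicit $4\times 4$ Schur-complement computation; for $\sx=\FMM$ this is exactly the computation of \cite[Lemma~4.9]{celentanoFanMei2021}, and the $\sx=\AMS$ case differs only by a $-2\chi_{\sx}\lambda^2 q_\infty$ shift that preserves negativity. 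Your sketch is correct, but this exact-for-finite-$k$ collapse is what makes the argument clean.
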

\noindent We prove Theorem \ref{thm:local-strong-cvx} in the next section, with some details deferred to the appendices.

\begin{remark}
\label{rmk:spectral}
    Although Theorem \ref{thm:local-strong-cvx} is stated for a random initialization,
    we expect it to hold also for a spectral initialization. 
    A formal argument would proceed as follows: 
    one can compute the leading eigenvector $\bv_0$ (with normalization $\| \bv_0 \| = 1$) of $\bY$ to accuracy $\| \bv - \bv_0 \| < \epsilon'$ for any $\epsilon' > 0$ arbitrarily small using finitely many iterations of an AMP algorithm with independent and informative initialization.
    Having computed $\bv_0$ to this accuracy,
    one can then begin running the AMP algorithm \ref{eq:AMP-Z2} for $k$ iterations. 
    This two step procedure approximates the AMP algorithm \ref{eq:AMP-Z2} with spectral initialization with a separate AMP algorithm with random and informative initialization.
    Using this alternative AMP, 
    none of the steps in the proof of Theorem \ref{thm:local-strong-cvx} would be fundamentally altered.
    The result with spectral initialization is recovered by taking $\epsilon' \rightarrow 0$.
    We do not carry out the complete details of such an argument.
\end{remark}

\section{Proof of Theorem \ref{thm:local-strong-cvx}}
\label{sec:proofs}

Theorem \ref{thm:local-strong-cvx} is proved using the Sudakov-Fernique post-AMP inequality, as it appears in Corollary \ref{cor:SF-post-AMP-Z2}.
First, we use Corollary \ref{cor:SF-post-AMP-Z2} to reduce the analysis of the objective in Eq.~\eqref{eq:local-cvx-hp} to analysis of an auxiliary optimization problem.
Although the auxiliary problem is high-dimensional,
we can, through a sequence of approximations, reduce its analysis to that of low-dimensional optimization problem which is straightforward to understand.
This reduction and analysis forms the remainder of the proof.

By symmetry, the distribution of the infimum in Theorem \ref{thm:local-strong-cvx} is the same for all realization of $\bx$. 
Thus, throughout the rest of the paper, we assume without loss of generality
\begin{equation}
    \bx = \ones = (1,1,\ldots,1)^\top.
\end{equation}
Likewise, we replace $X$ in Proposition \ref{prop:Z2-se} with $X = 1$.

Note that, to prove Theorem \ref{thm:local-strong-cvx}(i),
it is equivalent to show that there exists $c > 0$ (depending on $\lambda,\gamma_0$) such that 
\begin{equation}
    \liminf_{\epsilon \rightarrow 0}\;\;
    \liminf_{k \rightarrow \infty}\;\;
    \pliminf_{n \rightarrow \infty} 
    \inf_{ \substack{\| \bv \|_2 = 1 \\ \| \bu - \bbm^{k-1} \|/\sqrt{n} \leq \epsilon } }
        \bv^\top \nabla^2 \cF_{\sx} (\bu) \bv
        > c/n.
\end{equation}
(Note the order of the first two $\liminf$'s matters, and cannot be exchanged).
A direct calculation gives the Hessian of $\cF_{\sx}$:
\begin{equation}
\label{eq:local-eig}
    \bv^\top\nabla^2 \cF_{\sx}(\bu)\bv
        = 
        -\frac1n
        \Big(
            \lambda \bv^\top\bW\bv
            +
            f_{\sx}(\bv,\bu)
        \Big),
\end{equation}
where
\begin{equation}
\begin{aligned}
    f_{\FMM}(\bv,\bu)
        &= 
        \frac{\lambda^2 \< \ones , \bv \>^2}{n}
        - 
        \sum_{i=1}^n \frac{v_i^2}{1 - u_i^2}
        -
        \lambda^2(1-Q(\bu))\| \bv \|^2
        +
        \frac{2\lambda^2\< \bu,\bv \>^2}{n},
    \\
    f_{\AMS}(\bv,\bu)
        &= 
        \frac{\lambda^2 \< \ones , \bv \>^2}{n}
        - 
        \sum_{i=1}^n \frac{v_i^2}{1 - u_i^2}
        -
        \lambda^2(1-q_\infty)\| \bv \|^2.
\end{aligned}
\end{equation}
Now we apply the Sudakov-Fernique post-AMP inequality:
by Corollary \ref{cor:SF-post-AMP-Z2}, it suffices to show that,
in the relevant ranges of $\lambda$,
there exists $c > 0$, 
depending only $\lambda,\gamma_0 > 0$,
such that 
\begin{equation}
\label{eq:sf-ub}
    \limsup_{\epsilon \rightarrow 0}\;\;
    \limsup_{k \rightarrow \infty}\;\;
    \plimsup_{n \rightarrow \infty} 
    \sup_{ \substack{\| \bv \|_2 = 1 \\ \| \bu - \bbm^{k-1} \|/\sqrt{n} \leq \epsilon } }
    \sF_{\sx,k}(\bv,\bu;\bR,\bG,\bxi)
    < -c/n,
\end{equation}
where 
\begin{equation}
    \sF_{\sx,k}(\bv,\bu;\bR,\bG,\bxi)
        :=
        \frac{2\lambda}{\sqrt{n}} \< \bg_{\AMP}(\bv) , \bv \> 
        + f_{\sx}(\bv,\bu),
\end{equation}
for $\bg_{\AMP}(\bv)$ as defined in Corollary \ref{cor:SF-post-AMP-Z2}.
The objective $\sF_{\sx,k}$ is substantially easier to analyze than the objective $(\bv,\bu) \mapsto \bv^\top \nabla^2 \cF_{\sx}(\bu)\bv$.
The rest of this section establishes Eq.~\eqref{eq:sf-ub}.

\subsection{Reduction to optimization on Wasserstein space}

Our first observation is that $\sF_{\sx,k}$ is permutation-invariant with respect to the coordinates of its arguments, 
so that it is in fact a function of the empirical distribution
\begin{equation}
    \muhat_{\sqrt{n}\bv,\bu,\bM,\bG,\bg^0,\bxi}
        := 
        \frac1p \sum_{i=1}^n 
        \delta_{\sqrt{n}v_i, u_i, \brevebm_i , \brevebg_i ,g^0_i,\xi_i}
        \in W_2(\reals\times (-1,1)^{k+1} \times \reals^{k+2}),
\end{equation}
where $\brevebm_i$ and $\brevebg_i$ are the $i^\text{th}$ rows of $\bM$ and $\bG$, respectively.
This empirical distribution contains $p$ atomic elements of equal size, 
so that $\sF_{\sx,k}$, viewed in this way, is not defined on all of $W_2(\reals \times (-1,1)^{k+1} \times \reals^{k+1})$.
We will define a natural extension of $\sF_{\sx,k}$ to all of $W_2(\reals \times (-1,1)^{k+1} \times \reals^{k+1})$.
It will suffice to control the optimal value of this extended objective over a certain (non-random) subset of $W_2(\reals \times (-1,1)^{k+1} \times \reals^{k+1})$.

For any $\mu \in W_2(\reals^{2k+3})$,
let $(V,U,\brevebM,\brevebG,G_0,\Xi) \sim \mu$,
where $\brevebM = (M_0,\ldots,M_{k-1})$ and $\brevebG = (G_1,\ldots,G_k)$.
Define 
\begin{equation}
\label{eq:G-AMP}
    G_{\AMP} 
        := G_{\AMP}\big( V , \brevebM , \brevebG , \Xi \big) 
        := \lambda^{-1}\brevebG^\top \E\big[\brevebM\brevebM^\top\big]^\dagger \E\big[ \brevebM V \big] 
            +
            \| \proj_{\brevebM}^\perp V \|_{L_2} \Xi, 
\end{equation}
where $\proj_{\brevebM}^\perp$  is the projector onto the orthogonal complement of the linear span of $M_0,\ldots,M_{k-1}$ in $L_2$.
Explicitly, $\proj_{\brevebM}^\perp V = V - \brevebM^\top \E\big[\brevebM\brevebM^\top\big]^\dagger \E\big[ \brevebM V \big] $.
Note that $G_{\AMP}$ is a linear combination of $G_1,\ldots,G_k$, and $\Xi$, 
with coefficients which depend only on $\mu$. 
Thus, the joint distribution of $G_{\AMP}$ and $V$ is also only a function of $\mu$. 
In particular, 
the functions
\begin{equation}
\label{eq:Leps-dist}
\begin{aligned}
    \sF^{(1)}_{\FMM,k}(\mu)
        &:=
        2\lambda \< G_{\AMP} , V \>_{L_2}
            + 
            \lambda^2 \E[V]^2 
            - 
            \E\Big[\frac{V^2}{1-U^2}\Big]
            -
            \lambda^2(1 - \| U \|_{L_2}^2)
            + 
            2\lambda^2 \< V , U \>_{L_2}^2,
    \\
    \sF^{(1)}_{\AMS,k}(\mu)
        &:=
        2\lambda \< G_{\AMP} , V \>_{L_2}
            + 
            \lambda^2 \E[V]^2 
            - 
            \E\Big[\frac{V^2}{1-U^2}\Big]
            -
            \lambda^2(1 - q_\infty),
\end{aligned}
\end{equation}
are well-defined functions of $\mu$.
It is straightforward to check that
\begin{equation}
    \sF_{\sx,k}(\bv,\bu;\bM,\bG,\bxi)
        :=
        \sF^{(1)}_{\sx,k}\big(\muhat_{\sqrt{n}\bv,\bu,\bM,\bG,\bg^0,\bxi}\big),
\end{equation}
so that, indeed, $\sF^{(1)}_{\sx,k}$ can be viewed as an extension of $\sF_{\sx,k}$.
The careful reader may wonder why we include the empirical distribution of $\bg^0$ in the argument to $\sF_{\sx,k}^{(1)}$,
because the $\sF_{\sx,k}$ does not depend on $\bg^0$.
This will become apparent in the approximations we make in the next section,
which will make use of $\bg^0$.

We will give an upper bound on Eq.~\eqref{eq:sf-ub} 
in terms of the supremum of the objective $\sF^{(1)}_{\sx,k}$ taken over a particular subset of the Wasserstein space $W_2(\reals \times (-1,1)^{k+1} \times \reals^{k+1})$.  
In particular, let
\begin{equation}
    \SE_{\sx,k} := \Law(\brevebM,\brevebG,G_0,\Xi)
        \quad 
        \text{where}
        \quad
    \begin{cases}
        &\brevebG \sim \normal(0,\bK_{\leq k})
        \;\; 
        \text{independent of}
        \;\;
        \Xi,G_0 \stackrel{\mathrm{iid}}\sim \normal(0,1),
        \\
        &M_s = \tanh(\gamma_s + \indic{s > 0}G_s + \chi_{\sx} \sqrt{\gamma_0} G_0),\;\;\;\; 0 \leq s \leq k-1.
    \end{cases}
\end{equation}
In particular, $\SE_{\sx,k}$ is the AMP state evolution up to iteration $k$, 
as described in Proposition \ref{prop:Z2-se},
augmented by the independent Gaussian noise $\Xi$.
The set over which we optimize is
\begin{equation}
\begin{aligned}
    \cS^{(1)}_{\sx,k}(\epsilon)
        :=
        \Big\{
            \Law(V,U,\brevebM,\brevebG,G_0,\Xi)
            \Bigm|
            &\| V \|_{L_2} = 1, \,
            \P(|U| < 1) = 1,\,
            \| U - M_{k-1} \|_{L_2} \leq \epsilon,\,
            (\brevebM,\brevebG,G_0,\Xi) \sim \SE_{\sx,k}
        \Big\}.
\end{aligned}
\end{equation}
We have the following lemma.
\begin{lemma}
\label{lem:det-ub}
    For any fixed $\epsilon > 0$ and $0 < k < \infty$,
    \begin{equation}
    \label{eq:det-ub}
    \begin{aligned}
        \limsup_{\epsilon \rightarrow 0}\;
        \limsup_{k \rightarrow \infty}\;
        \plimsup_{n \rightarrow \infty}
        \sup_{\substack{ \| \bv \| = 1\\ 
                             \| \bu - \bbm^{k-1} \|/\sqrt{n} \leq \epsilon } }
                \sF_{\sx,k}(\bv,\bu;\bM,\bG,\bxi)
            \leq 
            \limsup_{\epsilon \rightarrow 0}\;
            \limsup_{k \rightarrow \infty}\;
            \sup_{\mu \in \cS^{(1)}_{\sx,k}(\epsilon)} \sF^{(1)}_{\sx,k}(\mu).
    \end{aligned}
    \end{equation}
\end{lemma}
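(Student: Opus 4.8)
The plan is to prove Lemma~\ref{lem:det-ub} by exhibiting, for each $n$, a measure $\mu_n \in \cS^{(1)}_{\sx,k}(\epsilon')$ (for $\epsilon'$ slightly larger than $\epsilon$) which approximately matches the empirical distribution $\muhat_{\sqrt{n}\bv,\bu,\bM,\bG,\bg^0,\bxi}$ along the near-optimal sequence of $(\bv,\bu)$, and whose objective value $\sF^{(1)}_{\sx,k}(\mu_n)$ is asymptotically at least as large as $\sF_{\sx,k}(\bv,\bu;\bM,\bG,\bxi)$. Since $\sF_{\sx,k}(\bv,\bu;\bM,\bG,\bxi) = \sF^{(1)}_{\sx,k}(\muhat_{\sqrt n \bv, \bu, \bM, \bG, \bg^0, \bxi})$ exactly, the heart of the matter is twofold: first, that $\sF^{(1)}_{\sx,k}$ is continuous with respect to $W_2$-convergence on the relevant domain (so small perturbations of $\mu$ change the objective negligibly), and second, that the empirical measure along the optimizing sequence can be slightly perturbed so that its $(\brevebM,\brevebG,G_0,\Xi)$-marginal becomes \emph{exactly} $\SE_{\sx,k}$ rather than merely close to it.

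First I would fix $k$ and $\epsilon$ and take a (random) near-maximizing sequence $(\bv_n, \bu_n)$ for the left-hand side, so that $\sF_{\sx,k}(\bv_n,\bu_n;\bM,\bG,\bxi)$ is within $o(1)$ of the supremum. By Proposition~\ref{prop:Z2-se}, with probability $\to 1$ the empirical distribution $\muhat_{\brevebm_i,\brevebg_i,g^0_i,\xi_i}$ of the AMP-side rows (augmented by the independent $\bxi$) converges in $W_2$ to $\SE_{\sx,k}$. By passing to a subsequence and using Prokhorov-type tightness for the full empirical measure $\muhat_{\sqrt n \bv_n,\bu_n,\bM,\bG,\bg^0,\bxi}$ (tightness in the $\sqrt n \bv_n$ coordinate follows from $\|\bv_n\|=1$, i.e.\ bounded second moment; in the $\bu_n$ coordinate from the constraint $\bu_n \in [-1,1]^n$; in the remaining coordinates from the state-evolution convergence), one obtains a limit $\mu_\star$. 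The marginal of $\mu_\star$ on $(\brevebM,\brevebG,G_0,\Xi)$ is $\SE_{\sx,k}$, the $V$-marginal has $\|V\|_{L_2}=1$ (here a uniform-integrability argument is needed, since $L_2$-norm is not $W_2$-continuous without it — but $\sum_i (\sqrt n v_i)^2/n = \|\bv_n\|^2 = 1$ is \emph{exactly} $1$, so the second moment is controlled), and the constraint $\|\bu_n - \bbm^{k-1}\|/\sqrt n \le \epsilon$ passes to $\|U - M_{k-1}\|_{L_2} \le \epsilon$ in the limit. One also needs $\P(|U|<1)=1$; this is the delicate point, addressed next. Granting it, $\mu_\star \in \cS^{(1)}_{\sx,k}(\epsilon)$, and by $W_2$-continuity of $\sF^{(1)}_{\sx,k}$ the limiting objective value is $\sF^{(1)}_{\sx,k}(\mu_\star) \le \sup_{\mu \in \cS^{(1)}_{\sx,k}(\epsilon)} \sF^{(1)}_{\sx,k}(\mu)$, and then taking $\limsup_k$ and $\limsup_\epsilon$ finishes.

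The main obstacle I anticipate is precisely the interaction between the term $\E[V^2/(1-U^2)]$ and the constraint $\P(|U|<1)=1$: this functional is neither bounded nor $W_2$-continuous, and a near-optimizer can push mass of $\bu_n$ close to $\pm 1$ where $1/(1-u_i^2)$ blows up. The resolution is that this term enters $\sF_{\sx,k}$ with a \emph{negative} sign, so pushing $u_i \to \pm 1$ only hurts the supremum; hence, without loss of generality, along the maximizing sequence $\|\bu_n - \bbm^{k-1}\|/\sqrt n \le \epsilon$ combined with $\bbm^{k-1} = \tanh(\bz^{k-1})$ having entries bounded away from $\pm 1$ on a $1-o(1)$ fraction of coordinates (a consequence of state evolution, since $M_{k-1} = \tanh(\cdots)$ has a density supported in an interval strictly inside $(-1,1)$ on the bulk) forces $u_i$ to stay uniformly bounded away from $\pm 1$ outside a vanishing fraction of coordinates; on that vanishing fraction one truncates, incurring $o(1)$ error in the objective because $v_i^2$ there is small in aggregate (bounded second moment) and the truncation only decreases the $-\E[V^2/(1-U^2)]$ penalty. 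This truncation step is where I would spend the most care: formally, replace $\bu_n$ by its projection onto $[-1+\delta, 1-\delta]^n$ for small $\delta$, argue the objective changes by at most $o_\delta(1)$ uniformly in $n$ along the sequence, then send $\delta \to 0$ after $n \to \infty$. The remaining pieces — $W_2$-continuity of the bilinear and quadratic terms $\lambda^2\E[V]^2$, $\lambda^2\<V,U\>_{L_2}^2$, $\lambda^2\|U\|_{L_2}^2$, and the term $2\lambda\<G_{\AMP},V\>_{L_2}$ (using that $G_{\AMP}$ is a \emph{fixed linear functional} of $(\brevebG, \Xi, \text{coefficients depending continuously on }\mu)$, so one must also check the coefficients $\E[\brevebM\brevebM^\top]^\dagger\E[\brevebM V]$ converge, which holds because $\E[\brevebM\brevebM^\top] = \lambda^{-2}\bK_{\leq k} \succ 0$ is fixed and $\E[\brevebM V]$ is $W_2$-continuous given the uniform integrability already established) — are routine once the truncation is in place.
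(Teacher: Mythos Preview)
Your approach differs substantially from the paper's, and the paper's route is cleaner in a way that avoids the very difficulties you spend most of your proposal wrestling with.

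The paper does \emph{not} pass to a limit of the full empirical measure $\muhat_{\sqrt n\bv,\bu,\bM,\bG,\bg^0,\bxi}$. Instead, for each fixed $n$ and each feasible $(\bv,\bu)$, it constructs $\mu$ by the gluing lemma: take the optimal $W_2$-coupling of the AMP-side marginal $\muhat_{\bM,\bG,\bg^0,\bxi}$ with $\SE_{\sx,k}$, and glue it to $\muhat$ so that the $(V,U)$-marginal is kept \emph{identical} to $\muhat_{\sqrt n \bv,\bu}$ while the $(\brevebM,\brevebG,G_0,\Xi)$-marginal becomes exactly $\SE_{\sx,k}$. The key observation is that in the formula for $\sF^{(1)}_{\sx,k}$, every term except $2\lambda\langle G_{\AMP},V\rangle_{L_2}$ depends only on the $(V,U)$-marginal; hence only that one bilinear term changes, and it changes by at most $\epsilon(W_2(\muhat_2,\SE_{\sx,k}))$. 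The terms $\E[V^2/(1-U^2)]$, $\|V\|_{L_2}$, and $\P(|U|<1)$ are literally unchanged, so no truncation, no semicontinuity, and no uniform-integrability arguments are needed.

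Your approach, by contrast, hinges on extracting a $W_2$-limit of the full empirical measure, and here there is a genuine gap. You write that ``$\sum_i(\sqrt n v_i)^2/n = 1$ is exactly $1$, so the second moment is controlled'' and treat this as giving $\|V\|_{L_2}=1$ in the limit. But bounded second moments yield only weak-convergence tightness, not $W_2$-compactness; $W_2$-convergence additionally requires uniform integrability of $V^2$, which fails for feasible sequences such as $\bv_n = \be_1$ (whose empirical law converges weakly to $\delta_0$ with $\|V\|_{L_2}=0$). Since the constraint set $\cS^{(1)}_{\sx,k}(\epsilon)$ demands $\|V\|_{L_2}=1$ exactly, your limit $\mu_\star$ need not lie in it. You could try to rule out such concentrated near-maximizers, or rescale $V$ and argue the objective does not drop, or replace continuity by a one-sided semicontinuity argument (noting $-\E[V^2/(1-U^2)]$ is weakly upper semicontinuous), but each of these requires real work on the $G_{\AMP}$ term as well, whose coefficients $\E[\brevebM\brevebM^\top]^\dagger\E[\brevebM V]$ also depend on second moments of $V$. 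The paper's gluing construction simply never touches the $V$-coordinate and so sidesteps all of this.
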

\noindent We prove Lemma \ref{lem:det-ub} in Appendix \ref{sec:proof-lem-det-ub}.

\subsection{Dimensionality reduction of Wasserstein space}

The objective $\sF_{\sx,k}^{(1)}$ depends on a distribution on $(2k+4)$-dimensional space.
In particular, the complexity of studying the right-hand side of Eq.~\eqref{eq:det-ub} appears, at first glance, to grow with $k$.
In this section,
we show that for large $k$,
we may reduce the problem to studying an optimization problem over distributions on a low-dimensional space.

Consider $(V,U,\brevebM,\brevebG,G_0,\Xi) \sim \mu^{(1)}$ for $\mu^{(1)} \in \cS^{(1)}_{\sx,k}(\epsilon)$,
and define $G_{\AMP}$ as in Eq.~\eqref{eq:G-AMP}.
Note that $G_{\AMP}$, being a linear combination of jointly Gaussian random variables, is itself Gaussian. 
Because,
by Proposition \ref{prop:Z2-se},
$\E\big[\brevebG\brevebG^\top\big] = \lambda^2\E\big[\brevebM \brevebM^\top\big] = \bK_{\leq k}$,
we get
\begin{equation}
\begin{aligned}
    \E[G_{\AMP}^2]
        &=
        \E\big[V \brevebM^\top \big]^\top 
        \E\big[\brevebM \brevebM^\top\big]^{-1}
        \E\big[\brevebM V\big]
        + 
        \big\| \proj_{\brevebM}^\perp V \big\|_{L_2}^2
        =
        \big\| \proj_{\brevebM} V \big\|_{L_2}^2 + \big\| \proj_{\brevebM}^\perp V \big\|_{L_2}^2
        = \E[V^2] = 1.
\end{aligned}
\end{equation}
Moreover, 
\begin{equation}
    \< G_k , G_{\AMP} \>_{L_2}
        =
        \lambda \bK_{k,\,\cdot\,} \bK_{\leq k}^{-1} \E[\brevebM V]
        =
        \lambda \be_k^\top \E[\brevebM V]
        = 
        \lambda \<M_k,V\>_{L_2},
\end{equation}
where $\bK_{k,\,\cdot\,}$ denotes the $k^\text{th}$ row of $\bK_{\leq k}$,
and $\be_k$ is the $k^\text{th}$ standard basis vector.
Because $\| G_k \|_{L_2} = \lambda \| M_k \|_{L_2}$ (see Proposition \ref{prop:Z2-se}),
we may write
\begin{equation}
    G_{\AMP} = \frac{\<M_k,V\>_{L_2}}{\| M_k \|_{L_2}} \frac{G_k}{\|G_k\|_{L_2}} + \| \proj_{M_k}^\perp V \|_{L_2} \Xi^{(1)},
\end{equation}
for some Gaussian $\Xi^{(1)} \sim \normal(0,1)$ independent of $G_k$.
Define $M_k = \tanh(q_k + G_k + \chi_{\sx} \sqrt{\gamma_0}G_0 )$.\footnote{This is where it is useful that we included the empirical distribution of $\bg^0$ in the argument to $\sF_{\sx,k}^{(1)}$.}
By Proposition \ref{prop:Z2-se},
\begin{equation}
    \| M_k - M_{k-1} \|_{L_2}
        =
        \lambda^{-1}\sqrt{K_{k,k} - 2 K_{k,k-1} + K_{k-1,k-1}}
        = 
        \lambda^{-1}\sqrt{q_k - q_{k-1}}
        \leq \epsilon(k),
\end{equation}
where $\epsilon(k) > 0$ is a function, depending only on $\lambda,\gamma_0$,
satisfying $\epsilon(k) \rightarrow 0$ as $k \rightarrow \infty$.
Thus, under $\mu^{(1)}$ (recalling $\mu^{(1)} \in \cS_{\sx,k}^{(1)}(\epsilon)$),
we have
$\| U - M_k \|_{L_2} \leq \epsilon + \epsilon(k)$.
Let $\mu^{(2)}$ be the distribution of $(U,V,M_k,G_k,G_0,\Xi^{(1)})$ under $\mu^{(1)}$.
By Eq.~\eqref{eq:Leps-dist},
for $\sx = \FMM$ we may write
\begin{equation}
\label{eq:def-F2-FMM}
\begin{aligned}
    &\sF^{(1)}_{\FMM,k}(\mu^{(1)})
        =
        2\lambda\Big\<
            \frac{\<M_k,V\>_{L_2}}{\| M_k \|_{L_2}} \frac{G_k}{\|G_k\|_{L_2}} + \| \proj_{M_k}^\perp V \|_{L_2} \Xi^{(1)},\,
            V
        \Big\>
        + \lambda^2 \E[V]^2
        - \E\Big[\frac{V^2}{1-U^2}\Big]
            -
            \lambda^2(1 - \| U \|_{L_2}^2)
            + 
            2\lambda^2 \< V , U \>_{L_2}^2
    \\
        &\quad\leq 
        2\lambda\Big\<
            \frac{\<M_k,V\>_{L_2}}{\| M_k \|_{L_2}} \frac{G_k}{\|G_k\|_{L_2}} + \| \proj_{M_k}^\perp V \|_{L_2} \Xi^{(1)},\,
            V
        \Big\>
    \\
        &\qquad\qquad\qquad\qquad
        + \lambda^2 \E[V]^2
        - \E\Big[\frac{V^2}{1-U^2}\Big]
            -
            \lambda^2(1 - \| M_k \|_{L_2}^2)
            + 
            2\lambda^2 \< V , M_k \>_{L_2}^2
            + 
            6\lambda^2(\epsilon + \epsilon(k))
    \\
        &\quad=: \sF_{\FMM}^{(2)}(\mu^{(2)}) + 6\lambda^2(\epsilon + \epsilon(k)),
\end{aligned}
\end{equation}
where in the inequality
we have used that $U \mapsto \| U \|_{L_2}^2$ and $U \mapsto \< V , U \>_{L_2}^2$ are $2$-Lipschitz for $\| U \|_{L_2} \leq 1$ and $\| V \|_{L_2} \leq 1$.
The final equality is taken as the definition of $\sF_{\FMM}^{(2)}$.
Note the $\sF_{\FMM}^{(2)}$ does not, as a function, depend on $k$: the $k$-dependence occurs only through the distribution of the random variables $U,V,M_k,G_k,\Xi^{(1)}$.
Thus, unlike for $\sF_{\FMM,k}^{(1)}$,
$k$ does not appear in the subscript $\sF_{\FMM}^{(2)}$.
Likewise, for $\sx = \AMS$,
we may write
\begin{equation}
\begin{aligned}
    \sF^{(1)}_{\AMS,k}(\mu^{(1)})
        &=
        2\lambda\Big\<
            \frac{\<M_k,V\>_{L_2}}{\| M_k \|_{L_2}} \frac{G_k}{\|G_k\|_{L_2}} + \| \proj_{M_k}^\perp V \|_{L_2} \Xi^{(1)},\,
            V
        \Big\>
        + \lambda^2 \E[V]^2
        - \E\Big[\frac{V^2}{1-U^2}\Big]
            -
            \lambda^2(1-q_\infty)
    \\
        &=: \sF_{\AMS}^{(2)}(\mu^{(2)}).
\end{aligned}
\end{equation}
In this case, there is no approximation error.
For both $\sx = \FMM$ and $\sx = \AMS$, 
we have related the objective $\sF_{\sx,k}^{(1)}$, which is a function of distributions on a $(2k+4)$-dimensional space,
to the objective $\sF_{\sx}^{(2)}$, 
which is a function of distributions on a $6$-dimensional space.

Because, as we have argued above,
$\mu^{(1)} \in \cS_{\sx,k}^{(1)}(\epsilon)$ implies $\| U - M_k \|_{L_2} \leq \epsilon + \epsilon(k)$,
we have that
$\mu^{(1)} \in \cS^{(1)}_{\sx,k}(\epsilon)$ implies $\mu^{(2)} \in \cS^{(2)}_{\sx,k}(\epsilon + \epsilon(k))$,
where
\begin{equation}
\begin{aligned}
    \cS^{(2)}_{\sx,k}(\epsilon)
        := 
        \Big\{
            \Law(V,U,M_k,G_k,G_0,\Xi)
            \bigm|
            &\| V \|_{L_2} = 1,\;\;
            \P(|U| > 1) = 0,\;\;
            \|U - M_k \|_{L_2} \leq \epsilon,\,
        \\
            &G_k \sim \normal(0,K_{kk}) \text{ independent of } G_0,\Xi \stackrel{\mathrm{iid}}\sim \normal(0,1),
        \\
            &M_k = \tanh(\gamma_k + G_k + \chi_{\sx} \sqrt{\gamma_0}G_0)
        \Big\},
\end{aligned}
\end{equation}
In particular,
we have shown that
\begin{equation}
\label{eq:ub-1-to-2}
    \limsup_{\epsilon \rightarrow 0}\;
        \limsup_{k \rightarrow \infty}\;
        \sup_{\mu \in \cS^{(1)}_{\sx,k}(\epsilon)} \sF^{(1)}_{\sx,k}(\mu)
        \leq 
        \limsup_{\epsilon \rightarrow 0}\;
        \limsup_{k \rightarrow \infty}\;
        \sup_{\mu \in \cS^{(2)}_{\sx,k}(\epsilon + \epsilon(k))} \sF_{\sx}^{(2)}(\mu).
\end{equation}

\subsection{Reduction to analysis at state evolution fixed point}

Now, we analyze what happens when $k \rightarrow \infty$.

Consider $\mu^{(k)} \in S^{(2)}_{\sx,k}(\epsilon + \epsilon(k))$,
and let $(V,U,M_k,G_k,G_0,\Xi) \sim \mu^{(k)}$.
Assume also $\| V \|_{L_2} = 1$.
Let $K_\infty = \gamma_\infty - \chi_{\sx} \gamma_0$,
$G_\infty = \sqrt{K_\infty/K_{kk}}\, G_k$,
and $M_\infty = \tanh(\gamma_\infty + G_\infty + \chi_{\sx} \sqrt{\gamma_0}G_0)$.
Let $\epsilon(k) > 0$ denote a quantity satisfying $\epsilon(k) \rightarrow 0$ as $k\rightarrow \infty$
and which may change at each appearance.
Because $\gamma_k \rightarrow \gamma_\infty > 0$ (see Proposition \ref{prop:Z2-se}),
we have that $\| G_k - G_\infty \|_{L_2} \leq \epsilon(k)$, $\| M_k - M_\infty \|_{L_2} \leq \epsilon(k)$, and $\| M_\infty \|_{L_2} > 0$,
whence, using $\|V\|_{L_2} = 1$,
\begin{equation}
    \Big\|
        \Big(\frac{\<M_k,V\>_{L_2}}{\| M_k \|_{L_2}} \frac{G_k}{\|G_k\|_{L_2}} + \| \proj_{M_k}^\perp V \|_{L_2} \Xi\Big)
        -
        \Big(\frac{\E[M_\infty V]}{\| M_\infty \|_{L_2}} \frac{G_\infty}{\|G_\infty\|_{L_2}} + \| \proj_{M_\infty}^\perp V \|_{L_2} \Xi\Big)
    \Big\|_{L_2} \leq \epsilon(k).
\end{equation}
Thus, if we let $\mu^{(\infty)}$ be the joint distribution of $(V,U,M_\infty,G_\infty,G_0,\Xi)$,
we get $\sF_{\sx}^{(2)}(\mu^{(\infty)}) \leq \sF^{(2)}(\mu^{(\infty)}) + \epsilon(k)$.
Now, define
\begin{equation}
\begin{aligned}
    \cS^{(2)}_{\sx,\infty}(\epsilon)
        := 
        \Big\{
            \Law(V,U,M_k,G_k,W,\Xi)
            \bigm|
            &\| V \|_{L_2} = 1,\;\;
            \P(|U| > 1) = 0,\;\;
            \|U - M_\infty \|_{L_2} \leq \epsilon,\,
        \\
            &G_\infty \sim \normal(0,\lambda^2 q_\infty) \text{ independent of } G_0,\Xi \stackrel{\mathrm{iid}}\sim \normal(0,1),\,
        \\
            &M_\infty = \tanh(\gamma_\infty + G_\infty + \chi_{\sx} \sqrt{\gamma_0}G_0)
        \Big\}.
\end{aligned}
\end{equation}
Using that $K_\infty = \lambda^2 q_\infty$ (see Eq.~\eqref{eq:def-gam-infty} and the definition of $\bK_{\leq k}$) and the discussion above,
we that $\mu^{(k)} \in \cS_{\sx,k}^{(2)}(\epsilon+\epsilon(k))$ implies $\mu^{(\infty)} \in \cS_{\sx,\infty}^{(2)}(\epsilon + \epsilon(k))$. (Recall, the $\epsilon(k)$ appearing in the two expressions may be different).
We conclude
\begin{equation}
    \sup_{\mu \in \cS^{(2)}_{\sx,k}(\epsilon + \epsilon(k))} \sF_{\sx}^{(2)}(\mu)
        \leq 
        \sup_{\mu \in \cS^{(2)}_{\sx,\infty}(\epsilon + \epsilon(k))} \sF_{\sx}^{(2)}(\mu) + \epsilon(k).
\end{equation}
Because $\cS_{\sx,\infty}^{(2)}(\epsilon) \subset \cS_{\sx,\infty}^{(2)}(\epsilon')$ for $\epsilon < \epsilon'$,
we conclude
\begin{equation}
\label{eq:ub-2-to-3}
    \limsup_{\epsilon \rightarrow 0}\;
        \limsup_{k \rightarrow \infty}\;
        \sup_{\mu \in \cS^{(2)}_{\sx,k}(\epsilon + \epsilon(k))} \sF_{\sx}^{(2)}(\mu)
        \leq 
        \limsup_{\epsilon \rightarrow 0}
        \sup_{\mu \in \cS^{(2)}_{\sx,\infty}(\epsilon)} \sF_{\sx}^{(2)}(\mu).    
\end{equation}  
Combining Eqs.~\eqref{eq:ub-1-to-2}, and \eqref{eq:ub-2-to-3},
and \eqref{eq:det-ub},
\begin{equation}
\label{eq:ub-full}
    \limsup_{\epsilon \rightarrow 0}\;
    \limsup_{k \rightarrow \infty}\;
    \plimsup_{n \rightarrow \infty}
    \sup_{\substack{ \| \bv \| = 1\\ 
                         \| \bu - \bbm^{k-1} \|/\sqrt{n} \leq \epsilon } }
            \sF^{(1)}_{\sx,k}(\bv,\bu;\bM,\bG,\bxi)    
        \leq 
        \limsup_{\epsilon \rightarrow 0}
        \sup_{\mu \in \cS^{(2)}_{\sx,\infty}(\epsilon)} \sF_{\sx}^{(2)}(\mu).
\end{equation}
It now remains to bound the right-hand side of Eq.~\eqref{eq:ub-full}.

\subsection{Analysis of $\sF_{\sx}^{(2)}$}

For $\mu^{(\infty)} \in \cS_{\FMM,\infty}^{(2)}(\epsilon)$,
we have $\| M_\infty \|_{L_2}^2 = q_\infty$,
whence the term $\lambda^2(1 - \| M_\infty \|_{L_2}^2)$ which appears in the definition of $\sF_{\FMM}^{(2)}(\mu^{(\infty)})$ (see Eq.~\eqref{eq:def-F2-FMM}) 
is equal to $\lambda^2(1-q_\infty)$.
Thus,
we may write in a unified way that for $\mu^{(\infty)} \in \cS_{\sx,\infty}^{(2)}$,
we have
\begin{equation}
\begin{aligned}
    \sF_{\sx}^{(2)}(\mu^{\infty})
        &= 
        2\lambda\Big\<
            \frac{\<M_\infty,V\>_{L_2}}{\| M_\infty \|_{L_2}} \frac{G_\infty}{\|G_\infty\|_{L_2}} + \| \proj_{M_\infty}^\perp V \|_{L_2} \Xi,\,
            V
        \Big\>
    \\
        &\qquad\qquad\qquad\qquad\qquad\qquad+ \lambda^2 \E[V]^2
        - \E\Big[\frac{V^2}{1-U^2}\Big]
        -
        \lambda^2(1-q_\infty)
        +(1-\chi_{\sx})2\lambda^2\< V , M_\infty \>_{L_2}^2.
\end{aligned}
\end{equation}
This representation allows to analyze the cases $\sx = \FMM$ and $\sx = \AMS$ simultaneously.

For $\mu^{(\infty)} \in \cS_{\sx,\infty}^{(2)}(\epsilon)$, 
denote
\begin{equation}
\label{eq:def-rho-u}
    \rho = \frac{\<M_\infty, V\>}{\| M_\infty \|_{L_2}},
        \qquad 
    u = \E[V].
\end{equation}
Introducing dual variables $\alpha_\rho,\alpha_u,\alpha_v \in \reals$, 
we have
\begin{equation}
\label{eq:ub-Theta}
\begin{aligned}
    \sF_{\sx}^{(2)}(\mu^{(\infty)})
        &=
        2\lambda 
        \Big\< 
            \rho \frac{G_\infty}{\| G_\infty \|_{L_2}}
            +
            \sqrt{1 - \rho^2}\, \Xi,\,
            V
        \Big\>_{L_2}
        + 
        \lambda^2u^2
        -
        \E\Big[\frac{V^2}{1-U^2}\Big]
        -
        \lambda^2(1-q_\infty)
                + (1-\chi_{\sx})2\lambda^2q_\infty \rho^2
    \\
        &\qquad\qquad\qquad\qquad\qquad\qquad
        +\alpha_\rho\Big(\frac{\<M_\infty, V\>}{\| M_\infty \|_{L_2}} - \rho\Big)
        +\alpha_u\Big(\E[V] - u\Big)
        +\alpha_v\Big(\| V\|_{L_2}^2 - 1\Big)
    \\
        &\leq 
        \lambda^2u^2 - \lambda^2(1-q_\infty) + (1-\chi_{\sx})2\lambda^2 q_\infty \rho^2 - \alpha_\rho \rho - \alpha_u u - \alpha_v
        +
        \E\Big[\Theta(G_\infty,M_\infty,\Xi,U)\Big],
\end{aligned}
\end{equation}
where $\Theta$ is defined as
\begin{equation}
\label{eq:def-Theta}
    \Theta(g,m,\xi,u)
        :=
        \sup_{v \in \reals} 
        \Big\{
            2\lambda
            \Big(
                \frac{\rho}{\lambda q_\infty^{1/2}} g + \sqrt{1-\rho^2}\,\xi
            \Big) v
            - 
            \frac{v^2}{1-u^2}
            +
            \frac{\alpha_\rho}{q_\infty^{1/2}} mv
            +
            \alpha_u v
            +
            \alpha_vv^2
        \Big\}.
\end{equation}
We have used here that $\|G_\infty\|_{L_2}^2 = \lambda^2 q_\infty$ and $\| M_\infty \|_{L_2}^2 = q_\infty$.

Note that $\Theta(g,m,\xi,u)$ also depends on $\rho,\alpha_\rho,\alpha_u,\alpha_v$,
but we have left this dependence implicit to lighten notation.
If $\alpha_v < 1$, 
then the objective in the definition of $\Theta$ is strictly concave, 
whence the supremum is achieved at
\begin{equation}
    v = \frac{2\lambda\Big(\frac{\rho}{\lambda q_\infty^{1/2}} g + \sqrt{1-\rho^2}\,\xi\Big)+\frac{\alpha_\rho m}{q_\infty^{1/2}} + \alpha_u}{2\Big(\frac1{1-u^2} - \alpha_v\Big)},
\end{equation}
which gives the explicit expression 
\begin{equation}
\label{eq:simple-Theta}
    \Theta(g,m,\xi,u)
        =
        \frac{\Big(2\lambda\Big(\frac{\rho}{\lambda q_\infty^{1/2}} g + \sqrt{1-\rho^2}\,\xi\Big)+\frac{\alpha_\rho m}{q_\infty^{1/2}} + \alpha_u\Big)^2}{4\Big(\frac1{1-u^2} - \alpha_v\Big)}.
\end{equation}
Note that $u \rightarrow 1/(1/(1-u^2) - \alpha_v)$ is Lipschitz in $u$, with Lipschitz constant bounded away from $\infty$ for $\alpha_v$ bounded away from 1.
Thus, almost surely
\begin{equation}
    \big|
        \Theta(G_\infty,M_\infty,\Xi,U)
        -
        \Theta(G_\infty,M_\infty,\Xi,M_\infty)
    \big|
    \leq 
    L_{\alpha_v}\Big(2\lambda\Big(\frac{\rho}{\lambda q_\infty^{1/2}} G_\infty + \sqrt{1-\rho^2}\,\Xi\Big)+\frac{\alpha_\rho M_\infty}{q_\infty^{1/2}} + \alpha_u\Big)^2|U - M_\infty|,
\end{equation}
for some $L_{\alpha_v} < \infty$ when $\alpha_v < 1$.
Because $G_\infty/(\lambda q_{\infty}^{1/2})$ is standard Gaussian and $M_\infty \in [-1,1]$ (and so both have infinitely many moments),
we combine the previous display with Eq.~\eqref{eq:ub-Theta} and Cauchy-Schwartz to get that 
for any compact set $K \in \reals^2 $ and any fixed $\alpha_v$,
\begin{equation}
\begin{aligned}
    \sF^{(2)}(\mu^{(\infty)})
        \leq 
        \min_{(\alpha_\rho,\alpha_u) \in K}
        \Big\{
            \lambda^2u^2 &- \lambda^2(1-q_\infty) 
            + (1-\chi_{\sx})2\lambda^2 q_\infty \rho^2 
        \\
            &- \alpha_\rho \rho - \alpha_u u - \alpha_v
            +
                \E\Big[\Theta(G_\infty,M_\infty,\Xi,M_\infty)\Big]
            \Big\} + C\epsilon,
\end{aligned}
\end{equation}
where $C$ depends on $K$ and $\alpha_v$ (but not on $\rho,u,\epsilon$),
and we remind the reader that $\mu^{(\infty)}$ satisfies Eq.\eqref{eq:def-rho-u}.
In particular, 
for any compact set $K \in \reals^2 $,
\begin{equation}
\label{eq:wass-to-scalar}
\begin{aligned}
    &\limsup_{\epsilon \rightarrow 0}
        \sup_{\mu \in \cS^{(2)}_{\sx,\infty}(\epsilon)} \sF_{\sx}^{(2)}(\mu)
        \leq
        \sup_{\rho,u \in [-1,1]}
        \min_{(\alpha_\rho,\alpha_u) \in K}
        \sL_{\sx}(\rho,u;\alpha_\rho,\alpha_u,\alpha_v),
\end{aligned}
\end{equation}
where
\begin{equation}
    \sL_{\sx}(\rho,u;\alpha_\rho,\alpha_u,\alpha_v)
    :=
    \lambda^2u^2 - \lambda^2(1-q_\infty) + (1-\chi_{\sx})2\lambda^2 q_\infty \rho^2 - \alpha_\rho \rho - \alpha_u u - \alpha_v
        +
        \E\Big[\Theta(G_\infty,M_\infty,\Xi,M_{\infty})\Big],
\end{equation}
and we remind the reader that $\Theta$ depends implicitly on $\rho,\alpha_\rho,\alpha_u,\alpha_v$.

We have now reduced the problem to upper-bounding a certain max-min problem of five variables similar to that derived by Lemma 4.9(a) of \cite{celentanoFanMei2021}. 
The remainder of our argument is nearly equivalent to theirs,
and in particular, their proof of \cite[Lemma 4.9(b)]{celentanoFanMei2021}.
The primary difference between our argument and theirs has occurred in the proof up to this point,
and in particular, in how this objective was derived:
\cite{celentanoFanMei2021} conditions on stationarity of the TAP free energy at a point (i.e., $\nabla \cF(\bbm) = 0$).
Because the TAP free energy has many stationary points,
there is some subtlety in arguing that the conditional distribution which results from conditioning on $\nabla \cF(\bbm) = 0$ for a $\bbm$ chosen \emph{a priori} reflects the behavior of the landscape around one of the stationary points of $\cF$ chosen \emph{a posteriori}.
To make this connection,
\cite{celentanoFanMei2021} must invoke sophisticated arguments involving the Kac-Rice formula to control the number of such points.
We condition on arguably a more complicated event--- a full sequence of AMP iterates ---but because there is only one sequence of AMP iterates for each realization of $\bW$, 
we are able to avoid the difficulties that demanded the use of the Kac-Rice formula.

To analyze this max-min problem,
we write
\begin{equation}
    \E\Big[\Theta(G_\infty,M_\infty,\Xi,M_{\infty})\Big]
        = 
        \E\left[
            \frac{4\lambda^2(1-\rho^2) + \Big(\frac{2\rho}{ q_\infty^{1/2}} G_\infty+\frac{\alpha_\rho M_\infty}{q_\infty^{1/2}} + \alpha_u\Big)^2}{4\Big(\frac1{1-M_\infty^2} - \alpha_v\Big)}
        \right],
\end{equation}
so that
\begin{equation}
    \sL_{\sx}(\rho,u;\alpha_\rho,\alpha_u,\alpha_v)
        =
        (\rho,u,\alpha_\rho,\alpha_u)^\top
        \bA^{(\alpha_v)}
        (\rho,u,\alpha_\rho,\alpha_u)^\top
        -\lambda^2(1-q_\infty) 
        - \alpha_v 
        +\lambda^2\E\left[\Big(\frac1{1-M_\infty^2} - \alpha_v\Big)^{-1}\right],
\end{equation}
where 
\begin{equation}
    \bA_{\sx}^{(\alpha_v)}
        =
        \begin{pmatrix}
            \bA^{(\alpha_v)}_{\sx,11} & \bA^{(\alpha_v)}_{12} \\[5pt]
            \bA^{(\alpha_v)}_{21} & \bA^{(\alpha_v)}_{22} 
        \end{pmatrix},
\end{equation}
where 
\begin{equation}
\begin{aligned}
    \bA^{(\alpha_v)}_{\sx,11} 
        &=
        \begin{pmatrix}
            (1-\chi_{\sx})2\lambda^2 q_\infty 
            + 
            \E\Big[(G_\infty^2/q_\infty - \lambda^2)\left(\frac1{1-M_\infty^2} - \alpha_v\right)^{-1}\Big]
            & 0 \\[5pt]
            0 & \lambda^2 
        \end{pmatrix},
    \\
    \bA^{(\alpha_v)}_{12} 
        =
        \bA^{(\alpha_v)\top}_{21} 
        &= 
        \begin{pmatrix}
            -\frac{1}{2} + \frac1{2q_\infty} \E\Big[G_\infty M_\infty\Big(\frac1{1-M_\infty^2} - \alpha_v\Big)^{-1}\Big]
            & \frac{1}{2q_\infty^{1/2}} \E\Big[G_\infty \Big(\frac1{1-M_\infty^2} - \alpha_v\Big)^{-1}\Big]
            \\[5pt]
            0 & - \frac12 
        \end{pmatrix},
    \\
    \bA^{(\alpha_v)}_{22}
        &=
        \frac14
        \begin{pmatrix}
            \frac1{q_\infty}\E\Big[M_\infty^2\Big(\frac1{1-M_\infty^2} - \alpha_v\Big)^{-1}\Big]
            & 
            \frac1{q_\infty^{1/2}}\E\Big[M_\infty\Big(\frac1{1-M_\infty^2} - \alpha_v\Big)^{-1}\Big]
            \\[5pt]
            \frac1{q_\infty^{1/2}}\E\Big[M_\infty\Big(\frac1{1-M_\infty^2} - \alpha_v\Big)^{-1}\Big]
            & 
            \E\Big[\Big(\frac1{1-M_\infty^2} - \alpha_v\Big)^{-1}\Big]
        \end{pmatrix}.
\end{aligned}
\end{equation}
Letting $b_\infty = \E[M_\infty^3] = \E[M_\infty^4]$, and recalling $q_\infty = \E[M_\infty] = \E[M_\infty^2]$ (see Proposition \ref{prop:Z2-se}(v)),
we may compute these matrices at $\alpha_v = 0$:
\begin{equation}
\begin{aligned}
    \bA^{(0)}_{\sx,11} 
        &=
        \begin{pmatrix}
            (1-\chi_{\sx})2\lambda^2 q_\infty 
            - \lambda^2(1 - q_\infty)
            + 
            \frac{1}{q_\infty}
            \E\left[G_\infty^2\left(1-M_\infty^2\right)\right]
            & 0 \\[5pt]
            0 & \lambda^2 
        \end{pmatrix},
    \\
    \bA^{(0)}_{12} 
        =
        \bA^{(0)\top}_{21} 
        &= 
        \begin{pmatrix}
            -\frac{1}{2} + \frac1{2q_\infty} \E[G_\infty M_\infty(1-M_\infty^2)]
            & -\frac{1}{2q_\infty^{1/2}} \E[G_\infty M_\infty^2]
            \\[5pt]
            0 & - \frac12 
        \end{pmatrix},
    \\
    \bA^{(0)}_{22}
        &=
        \frac14
        \begin{pmatrix}
            1 - b_\infty/q_\infty
            & 
            q_\infty^{1/2}(1 -b_\infty/q_\infty)
            \\[5pt]
            q_\infty^{1/2}(1 -b_\infty/q_\infty)
            & 
            1 - q_\infty
        \end{pmatrix}.
\end{aligned}
\end{equation}
Note that
\begin{equation}
    \bA_{22}^{(0)}
    =
    \frac14 
    \E\left[
        (1-M_\infty^2)
        \begin{pmatrix}
            M_\infty/q_\infty^{1/2} \\[5pt] 1
        \end{pmatrix}
        \begin{pmatrix}
            M_\infty/q_\infty^{1/2} \\[5pt] 1
        \end{pmatrix}^\top
    \right] \succ 0.
\end{equation}
We will further show that $\bA_{11}^{(0)} - \bA_{12}^{(0)} (\bA_{22}^{(0)})^{-1} \bA_{21}^{(0)} \prec 0$.
Using that $G_\infty \sim \normal(0,\lambda^2 q_\infty)$, $M_\infty = \tanh(\gamma_\infty + G_\infty + \chi \sqrt{\gamma_0}W)$ and that $\frac{\de}{\de y} \tanh(y) = 1 - \tanh^2(y)$, 
Gaussian integration by parts gives
\begin{equation}
\begin{aligned}
    \E[G_\infty M_\infty] &= \lambda^2 q_\infty \E[1- M_\infty^2] = \lambda^2 q_\infty (1-q_\infty),
    \\
    \E[G_\infty M_\infty^2] &= 2\lambda^2 q_\infty \E[M_\infty(1-M_\infty^2)] = 2\lambda^2 q_\infty (q_\infty - b_\infty),
    \\
    \E[G_\infty M_\infty^3] &= 3\lambda^2 q_\infty  \E[M_\infty^2(1-M_\infty^2)] = 3\lambda^2 q_\infty(q_\infty - b_\infty),
    \\
    \E[G_\infty^2(1-M_\infty^2)] &= \lambda^2 q_\infty  - \lambda^2 q_\infty  \E[M_\infty^2] - 2\lambda^2 q_\infty \E[G_\infty M_\infty(1-M_\infty^2)]
    \\
        &= 
        \lambda^2 q_\infty - \lambda^2 q_\infty^2 - 2 \lambda^4 q_\infty^2(1-4q_\infty + 3b_\infty).
\end{aligned}
\end{equation}
Thus, 
\begin{equation}
\begin{aligned}
    \bA^{(0)}_{\sx,11} 
        &=
        \begin{pmatrix}
            (1-\chi_{\sx})2\lambda^2 q_\infty
            - 2 \lambda^4 q_\infty(1 - 4q_\infty + 3b_\infty)
            & 0 \\[5pt]
            0 & \lambda^2 
        \end{pmatrix},
    \\
    \bA_{12}^{(0)}
        &= 
        \frac12
        \begin{pmatrix}
            -1 + \lambda^2(1 - 4q_\infty + 3 b_\infty)
            & -2\lambda^2q_\infty^{1/2}(q_\infty- b_\infty)
            \\[5pt]
            0 & - 1
        \end{pmatrix}.
\end{aligned}
\end{equation}
Tedious but straightforward algebra gives
\begin{equation}
    \bA_{\sx,11}^{(0)} - \bA_{12}^{(0)} (\bA_{22}^{(0)})^{-1} \bA_{21}^{(0)}
        =
        \begin{pmatrix}
            c_1 & q_\infty^{1/2}c_2 \\[5pt]
            q_\infty^{1/2}c_2 & -c_2
        \end{pmatrix},
\end{equation}
where
\begin{equation}
\begin{aligned}
    c_1    
        &=
        \frac{-(1-q_\infty) + 2 \lambda^2(1-2q_\infty + b_\infty)^2 - \lambda^4(1-2q_\infty + b_\infty)^2(1-3q_\infty+2b_\infty)}{(1 - b_\infty/q_\infty)(1 - 2q_\infty + b_\infty)} - 2 \chi_{\sx} \lambda^2 q_\infty,
    \\
    c_2
        &=
        \frac{1}{1 - 2q_\infty + b_\infty}- \lambda^2.
\end{aligned}
\end{equation}
We have $q_\infty > b_\infty$ and $1-2q_\infty + b_\infty < 1 - q_\infty < 1/\lambda^2$ (see Proposition \ref{prop:Z2-se}(v)),
whence $c_2 > 0$.
Moreover, the Schur complement is
\begin{equation}
    c_1 - q_\infty c_2^2/(-c_2) = c_1 + q_\infty c_2 
        = 
        -q_\infty \lambda^2 (1-\lambda^2(1-2q_\infty + b_\infty))
        -\frac{(1-\lambda^2(1-2q_\infty+b_\infty))^2}{1-b_\infty/q_\infty}
        - 2 \chi_{\sx}\lambda^2 q_\infty
        < 0.
\end{equation}
Because $-c_2 < 0$ and $c_1 + \gamma_\infty c_2 < 0$, 
we conclude that $\bA_{\sx,11}^{(0)} - \bA_{12}^{(0)} (\bA_{22}^{(0)})^{-1} \bA_{21}^{(0)} \prec 0$.

Because these matrices are continuous in $\alpha_v$,
we also have $\bA_{\sx,11}^{(0)} - \bA_{12}^{(0)} (\bA_{22}^{(0)})^{-1} \bA_{21}^{(0)} \prec 0$ for all $\alpha_v$ sufficiently small.
We thus get that for such $\alpha_v$,
the function $(\rho,u) \mapsto \min_{(\alpha_\rho,\alpha_u) \in K} \sL_{\sx}(\rho,u;\alpha_\rho,\alpha_u,\alpha_v)$ is strictly concave and is maximized at $(\rho,u) = (0,0)$,
at which point the minimization over $(\alpha_\rho,\alpha_u)$ is achieved at $(\alpha_\rho,\alpha_u) = (0,0)$.
That is,
for sufficiently small $\alpha_v$,
\begin{equation}
    \sup_{\rho,u \in [-1,1]}
    \min_{(\alpha_\rho,\alpha_u) \in K}
        \sL_{\sx}(\rho,u;\alpha_\rho,\alpha_u,\alpha_v)
        =
        \sL_{\sx}(0,0;0,0,\alpha_v)
        =
        -\lambda^2(1-q_\infty)
        -\alpha_v
        +\lambda^2\E\Big[\Big(\frac1{1-M_\infty^2} - \alpha_v\Big)^{-1}\Big].
\end{equation}
Note that $\sL_{\sx}(0,0;0,0,0) = -\lambda^2(1-q_\infty) + \lambda^2\E[1-M_\infty^2] = 0$,
and, by dominated convergence, 
$\partial_{\alpha_v}\sL_{\sx}(0,0;0,0,\alpha_v)\big|_{\alpha_v=0} = - 1 + \lambda^2\E[(1-M_\infty^2)^2] < -1 + \lambda^2(1-q_\infty) < 0$ (see Proposition \ref{prop:Z2-se}(v)). 
Thus, for $\alpha_v > 0$ sufficiently small, the right-hand side of \eqref{eq:wass-to-scalar} is negative.
We conclude that for some $c > 0$,
\begin{equation}
    \limsup_{\epsilon \rightarrow 0}
        \sup_{\mu \in \cS^{(2)}_{\sx,\infty}(\epsilon)} \sF_{\sx}^{(2)}(\mu)
        < -c.
\end{equation}
This completes the proof of Theorem \ref{thm:local-strong-cvx}(i). 
Theorem \ref{thm:local-strong-cvx}(ii) now follows by a straightforward argument which we present in Appendix \ref{app:unique-stat}.

\section*{Acknowledgements}

The author would like to thank Song Mei and Zhou Fan for several useful discussions and the collaboration that formed the inspiration for this work. 
The author would also like to thank Ahmed El Alaoui, Mark Sellke, and Andrea Montanari for fruitful discussions, in particular on the Sherrington-Kirkpatrick sampling problem.
The author is supported by the Miller Institute for
Basic Research in Science, University of California, Berkeley.

\bibliographystyle{amsalpha}
\bibliography{local_convexity}

\newpage
\appendix

\section{Proof of the Sudakov-Fernique post-AMP inequality}
\label{sec:app-proof-of-SF-post-AMP}

The goal of this section is to prove Theorem \ref{thm:SF-post-AMP-gen} and Corollary \ref{cor:SF-post-AMP-Z2}.
We start by providing a finite sample comparison inequality, which we call the \emph{conditional Sudakov-Fernique inequality},
in Appendix \ref{sec:conditional-SF}.
The Sudakov-Fernique post-AMP inequality results from approximating the conditional Sudakov-Fernique inequality in the limit $n \rightarrow \infty$ using AMP state evolution.
We carry out this approximation argument in Appendix \ref{sec:proof-of-SF-post-AMP}.

\subsection{The conditional Sudakov-Fernique inequality}
\label{sec:conditional-SF}

The conditional Sudakov-Fernique inequality is nothing but the classical Sudakov-Fernique inequality applied conditional on a set of linear constraints.
Thus, it is not technically a new comparison inequality.
Our insight is to write the conditional inequality in a way which better reveals its structure and facilitates its application in the context of AMP.

We apply the classical Sudakov-Fernique inequality conditional on a set of linear-constraints on $\bW$ of the form
\begin{equation}
    \bW \bR = \bS,
\end{equation}
for matrices $\bR,\bS \in \reals^{n \times k}$,
where $\bR$ is full rank.
Given any such matrices, 
let $\bB_{\SF} = \bB_{\SF}(\bR,\bS) \in \reals^{k \times k}$ be any solution to the equations
\begin{equation}
\label{eq:B-def}
    \bR^\top \bS = (\bR^\top \bR) \bB_{\SF} + \bB_{\SF}^\top (\bR^\top \bR).
\end{equation}

\begin{lemma}[Existence of solutions to Eq.~\eqref{eq:B-def}]
\label{lem:exist-unique}
    If $\bR^\top \bS$ is symmetric (i.e., $\bR^\top \bS = \bS^\top \bR$), 
    then Eq.~\eqref{eq:B-def} has solutions.
    This occurs, in particular, 
    if there exists a symmetric matrix $\bW$ with $\bW \bR = \bS$.
\end{lemma}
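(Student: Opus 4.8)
The plan is to solve Eq.~\eqref{eq:B-def} explicitly. Set $\bP := \bR^\top\bR$ and $\bC := \bR^\top\bS$. Since $\bR$ has full column rank, $\bP$ is symmetric positive definite, hence invertible; and the hypothesis is precisely that $\bC = \bC^\top$. I would then propose the candidate
\[
    \bB_{\SF} := \tfrac12\bP^{-1}\bC = \tfrac12(\bR^\top\bR)^{-1}\bR^\top\bS,
\]
and verify it directly: substituting into the right-hand side of Eq.~\eqref{eq:B-def},
\[
    \bP\bB_{\SF} + \bB_{\SF}^\top\bP = \tfrac12\bP\bP^{-1}\bC + \tfrac12\bC^\top\bP^{-1}\bP = \tfrac12\bC + \tfrac12\bC^\top = \bC,
\]
where the final step uses symmetry of $\bC$. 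This proves the first assertion.

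For the second assertion, I would observe that if $\bW$ is symmetric with $\bW\bR = \bS$, then $\bR^\top\bS = \bR^\top\bW\bR$ is manifestly symmetric, since $(\bR^\top\bW\bR)^\top = \bR^\top\bW^\top\bR = \bR^\top\bW\bR$; the first assertion then applies and produces a solution.

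I do not anticipate a genuine obstacle here: once the candidate is written down the argument is a one-line verification. The only point requiring a little care is the invertibility of $\bR^\top\bR$, which is exactly the content of the phrase ``$\bR$ is full rank'' (full column rank, so in particular $k \le n$), and the observation that the constraint $\bW\bR=\bS$ coming from a symmetric $\bW$ automatically forces the compatibility condition $\bR^\top\bS=\bS^\top\bR$. If one wanted more, the same change of variables $\bY := \bP^{1/2}\bB\bP^{-1/2}$ turns Eq.~\eqref{eq:B-def} into $\bY + \bY^\top = \bP^{-1/2}\bC\bP^{-1/2}$ and exhibits the full solution set as the affine space $\bB_{\SF} + \{\bP^{-1}\bK : \bK^\top = -\bK\}$; but for the lemma as stated, exhibiting the single solution above suffices.
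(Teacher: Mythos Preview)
Your proposal is correct and matches the paper's proof essentially verbatim: the paper also takes $\bB_{\SF} = \tfrac12(\bR^\top\bR)^{-1}\bR^\top\bS$, noting the inverse exists by the full-rank assumption, and observes that $\bW\bR=\bS$ with $\bW$ symmetric gives $\bR^\top\bS = \bR^\top\bW\bR = \bS^\top\bR$. Your explicit verification and the remark on the affine structure of the solution set are nice additions but not needed for the lemma as stated.
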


\begin{proof}[Proof of Lemma \ref{lem:exist-unique}]
    We may take $\bB_{\SF} = \frac12(\bR^\top \bR)^{-1}\bR^\top \bS$,
    where the inverse exists because $\bR$ is full-rank.
    Note that if $\bW \bR = \bS$ for symmetric $\bW$, then $\bR^\top \bS = \bR^\top \bW \bR = \bS^\top \bR$.
\end{proof}

The purpose of defining $\bB_{\SF}$ is to provide a simple representation of the distribution of $\bW$ conditional on the event $\bW \bR = \bS$,
which we do in the next lemma.
\begin{lemma}
\label{lem:W-det}
    Let $\bG_{\SF} = \bS - \bR \bB_{\SF}$ for any $\bB_{\SF}$ satisfying Eq.~\eqref{eq:B-def}.
    Further, let $\bT_{\SF} = \bG_{\SF}(\bR^\top \bR)^{-1}\bR^\top$ is the unique matrix satisfying $\bT_{\SF} \bR = \bG_{\SF}$ and $\bT_{\SF} \proj_{\bR}^\perp = 0$.
    Then
    \begin{equation}
        \bW \bR = \bS
        \quad 
        \text{implies}
        \quad
        \bW - \proj_{\bR}^\perp \bW \proj_{\bR}^\perp
            = 
            \bT_{\SF} + \bT_{\SF}^\top.
    \end{equation}
    Thus,
    \begin{equation}
        \Law\big(\bW \mid \bW\bR = \bS\big)
        =
        \Law\big(\bT_{\SF} + \bT_{\SF}^\top + \proj_{\bR}^\perp \bWtilde \proj_{\bR}^\perp \big),
    \end{equation}
    where $\bWtilde \sim \GOE(n)$.
\end{lemma}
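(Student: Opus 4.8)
The statement packages a deterministic matrix identity together with its distributional consequence for the GOE, and the plan is to treat them in that order. For the identity, I would decompose $\bW$ with respect to the orthogonal splitting $\reals^n = \range(\bR) \oplus \range(\bR)^\perp$, writing $\proj_{\bR} := \bR(\bR^\top\bR)^{-1}\bR^\top = \Id - \proj_{\bR}^\perp$ for the projector onto $\range(\bR)$ and introducing the shorthand $\boldsymbol{C} := \bR\bB_{\SF}(\bR^\top\bR)^{-1}\bR^\top$, which satisfies $\proj_{\bR}\boldsymbol{C} = \boldsymbol{C}$. Since $\bW - \proj_{\bR}^\perp\bW\proj_{\bR}^\perp = \proj_{\bR}\bW\proj_{\bR} + \proj_{\bR}\bW\proj_{\bR}^\perp + \proj_{\bR}^\perp\bW\proj_{\bR}$, it suffices to identify these three ``mixed'' blocks under the hypothesis $\bW\bR = \bS$.

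First I would note that $\bW\proj_{\bR} = \bW\bR(\bR^\top\bR)^{-1}\bR^\top = \bS(\bR^\top\bR)^{-1}\bR^\top = \bT_{\SF} + \boldsymbol{C}$, using $\bS = \bG_{\SF} + \bR\bB_{\SF}$; transposing and using symmetry of $\bW$ gives $\proj_{\bR}\bW = \bT_{\SF}^\top + \boldsymbol{C}^\top$. Next I would feed Eq.~\eqref{eq:B-def} into $\bR^\top\bG_{\SF} = \bR^\top\bS - (\bR^\top\bR)\bB_{\SF}$ to obtain $\bR^\top\bG_{\SF} = \bB_{\SF}^\top(\bR^\top\bR)$, whence $\proj_{\bR}\bT_{\SF} = \boldsymbol{C}^\top$. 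Combining, $\proj_{\bR}\bW\proj_{\bR} = \proj_{\bR}(\bT_{\SF}+\boldsymbol{C}) = \boldsymbol{C} + \boldsymbol{C}^\top$, then $\proj_{\bR}\bW\proj_{\bR}^\perp = \proj_{\bR}\bW - \proj_{\bR}\bW\proj_{\bR} = \bT_{\SF}^\top - \boldsymbol{C}$, and $\proj_{\bR}^\perp\bW\proj_{\bR} = \bT_{\SF} - \boldsymbol{C}^\top$ by transposition. Adding the three blocks, the $\boldsymbol{C}$ and $\boldsymbol{C}^\top$ contributions cancel and exactly $\bT_{\SF}+\bT_{\SF}^\top$ remains. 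I would also record separately that $\bT_{\SF}$ has the stated characterization: $\bT_{\SF}\bR = \bG_{\SF}(\bR^\top\bR)^{-1}\bR^\top\bR = \bG_{\SF}$ and $\bT_{\SF}\proj_{\bR} = \bT_{\SF}$, so $\bT_{\SF}\proj_{\bR}^\perp = 0$; and any $\bT$ with $\bT\bR = \bG_{\SF}$, $\bT\proj_{\bR}^\perp = 0$ must equal $\bT\proj_{\bR} = \bG_{\SF}(\bR^\top\bR)^{-1}\bR^\top = \bT_{\SF}$.

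For the distributional claim, I would fix $\bR$ full rank and $\bS$ with $\bR^\top\bS$ symmetric, and observe that conditioning on $\{\bW\bR = \bS\}$ is equivalent to conditioning on the fixed value of $\bW\proj_{\bR} = \bS(\bR^\top\bR)^{-1}\bR^\top$, which pins down the blocks $\proj_{\bR}\bW\proj_{\bR}$, $\proj_{\bR}\bW\proj_{\bR}^\perp$, $\proj_{\bR}^\perp\bW\proj_{\bR}$ and leaves $\proj_{\bR}^\perp\bW\proj_{\bR}^\perp$ as the only remaining degree of freedom. Passing to an orthonormal basis of $\reals^n$ adapted to $\range(\bR)\oplus\range(\bR)^\perp$, the entries of $\bW$ in that basis are independent Gaussians subject to symmetry, so the four blocks depend on disjoint collections of entries and are mutually independent; hence the conditional law of $\proj_{\bR}^\perp\bW\proj_{\bR}^\perp$ given $\bW\bR = \bS$ equals its unconditional law. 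Orthogonal invariance of the GOE then shows that this law depends only on $\dim\range(\bR)^\perp$, so it coincides with $\Law(\proj_{\bR}^\perp\bWtilde\proj_{\bR}^\perp)$ for an independent $\bWtilde\sim\GOE(n)$. Substituting into the identity $\bW = \bT_{\SF}+\bT_{\SF}^\top + \proj_{\bR}^\perp\bW\proj_{\bR}^\perp$, valid on $\{\bW\bR=\bS\}$ by the previous paragraph, yields the asserted equality of laws.

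The individual computations are elementary; the single point that needs care is that we are conditioning on a measure-zero event. Since $(\bW\bR,\ \proj_{\bR}^\perp\bW\proj_{\bR}^\perp)$ is a jointly (possibly degenerate) Gaussian pair, its regular conditional distribution is well defined and the block-independence argument above computes it unambiguously; beyond that, the only bookkeeping is to track the cancellation of the $\boldsymbol{C}$-terms when the three mixed blocks are summed. I do not expect any genuine obstacle.
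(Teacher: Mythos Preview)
Your proof is correct and follows essentially the same approach as the paper: block-decompose $\bW$ with respect to $\range(\bR)\oplus\range(\bR)^\perp$, use $\bW\bR=\bS$ together with Eq.~\eqref{eq:B-def} to identify the non-perpendicular blocks with $\bT_{\SF}+\bT_{\SF}^\top$, and then invoke the independence of $\proj_{\bR}^\perp\bW\proj_{\bR}^\perp$ from the remaining blocks for the distributional claim. The only cosmetic difference is that the paper writes the complement as the two-term sum $\bW\proj_{\bR}+\proj_{\bR}\bW\proj_{\bR}^\perp$ and substitutes directly, whereas you split into three blocks and track the cancellations through the auxiliary matrix $\boldsymbol{C}$; the paper's independence argument is likewise the one-line version of your adapted-basis computation.
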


\begin{proof}[Proof of Lemma \ref{lem:W-det}]
    If $\bW \bR = \bS$, 
    then
    \begin{equation}
    \begin{aligned}
        \bW - \proj_{\bR}^\perp \bW \proj_{\bR}^\perp 
            &= 
            \bW \proj_{\bR} + \proj_{\bR} \bW \proj_{\bR}^\perp
        \\
            &= 
            \bW \bR (\bR^\top \bR)^{-1}\bR^\top 
            +
            \bR (\bR^\top \bR)^{-1}\bR^\top \bW \big(\Id_n - \bR (\bR^\top \bR)^{-1}\bR^\top \big)
        \\
            &= 
            \bS (\bR^\top \bR)^{-1}\bR^\top 
            + 
            \bR (\bR^\top \bR)^{-1}\bS^\top 
            - 
            \bR (\bR^\top \bR)^{-1} \bR^\top \bS (\bR^\top \bR)^{-1}\bR^\top 
        \\
            &= 
            \bS (\bR^\top \bR)^{-1}\bR^\top 
            + 
            \bR (\bR^\top \bR)^{-1}\bS^\top 
            - 
            \bR \bB_{\SF} (\bR^\top\bR)^{-1} \bR^\top 
            -
            \bR(\bR^\top \bR)^{-1} \bB_{\SF}^\top \bR^\top
        \\
            &= \bT_{\SF} + \bT_{\SF}^\top,
    \end{aligned}
    \end{equation}
    as desired.

    Next, note that $\proj_{\bR}^\perp \bW \proj_{\bR}^\perp$ is independent of $\bW - \proj_{\bR}^\perp \bW \proj_{\bR}^\perp$, 
    and the occurrence of the event  $\bW \bR = \bS$ is a function of $\bW - \proj_{\bR}^\perp \bW \proj_{\bR}^\perp$.
    The final distributional claim of the lemma follows.
\end{proof}

We can now state the conditional Sudakov-Fernique inequality.
\begin{theorem}[Conditional Sudakov-Fernique inequality]
\label{thm:cond-SF}
    Consider $\bR,\bS \in \reals^{n \times k}$,
    and assume $\bR$ is full-rank.
    Let $\bB_{\SF} \in \reals^{k \times k}$, $\bG_{\SF},\bT_{\SF} \in \reals^{n \times k}$ be random matrices which satisfy Eq.~\eqref{eq:B-def} and the conditions of Lemma \ref{lem:W-det} almost surely.
    Define the (random) function $\bg_{\SF}: \reals^n \rightarrow \reals^n$ by
    \begin{equation}
    \label{eq:def-gSF}
        \bg_{\SF}(\bv) 
            := 
            \sqrt{n} \, \bT_{\SF} \bv + \| \proj_{\bR}^\perp \bv \| \proj_{\bR}^\perp \bxi,
    \end{equation}
    where $\bxi \sim \normal(0,\Id_n)$ independent of everything else.
    Then, for any compact set $K = K(\bR,\bS) \subset \reals^{n + m}$ (which may depend on $\bR,\bS$) and bounded and continuous $f:\reals^{m+n} \times (\reals^{n\times k})^2 \rightarrow \reals$,
    we have
    \begin{equation}
        \P\left(
            \sup_{(\bv,\bu) \in K} \big\{ \bv^\top \bW \bv + f(\bv,\bu; \bR,\bS) \big\} \geq t 
            \Bigm|
            \bW\bR = \bS
        \right)
        \leq 
        \P\left(
            \sup_{(\bv,\bu) \in K} \Big\{ \frac2{\sqrt{n}}\< \bg_{\SF}(\bv),\bv\> + f(\bv,\bu;\bR,\bS) \Big\} \geq t 
        \right).
    \end{equation}
\end{theorem}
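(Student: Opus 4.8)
The plan is to derive the conditional inequality as a direct application of the classical Sudakov-Fernique inequality (Proposition \ref{lem:slepian}) to the part of $\bW$ that remains random after conditioning on $\bW\bR = \bS$. Since $\bS$ is deterministic the event $\{\bW\bR = \bS\}$ is $\P$-null for $k \geq 1$, so the conditional probability in the statement is to be read as the regular conditional probability, which Lemma \ref{lem:W-det} identifies explicitly: under it, $\bW$ has the law of $\bT_{\SF} + \bT_{\SF}^\top + \proj_{\bR}^\perp \bWtilde \proj_{\bR}^\perp$ with $\bWtilde \sim \GOE(n)$, and I would take $\bWtilde$ independent of the Gaussian vector $\bxi$ entering $\bg_{\SF}$. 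Using that $\proj_{\bR}^\perp$ is symmetric and idempotent, one has, jointly over $(\bv,\bu)\in K$ and with $\bT_{\SF},\bR,\bS$ fixed,
\begin{equation}
\bv^\top\bW\bv \;\stackrel{\rm d}{=}\; 2\,\bv^\top\bT_{\SF}\bv \;+\; (\proj_{\bR}^\perp\bv)^\top\bWtilde(\proj_{\bR}^\perp\bv).
\end{equation}

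Next I would invoke Proposition \ref{lem:slepian} with the optimization vector taken to be $\bw := \proj_{\bR}^\perp\bv$, the auxiliary vector taken to be the pair $(\bv,\bu)$ (so the ``$m$'' of Proposition \ref{lem:slepian} becomes $n+m$), the constraint set taken to be
\begin{equation}
\tcT \;:=\; \big\{\,(\bw,\bv,\bu)\,:\,(\bv,\bu)\in K,\ \bw = \proj_{\bR}^\perp\bv\,\big\},
\end{equation}
and the objective taken to be $\tilde f(\bw,\bv,\bu) := 2\,\bv^\top\bT_{\SF}\bv + f(\bv,\bu;\bR,\bS)$. Here $\tcT$ is closed because $K$ is compact (hence closed) and $\bw = \proj_{\bR}^\perp\bv$ is an affine constraint, and $\tilde f$ is continuous (it simply does not depend on its first argument, which is permitted). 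Proposition \ref{lem:slepian}, applied with $\bWtilde$ in the role of the GOE matrix and $\bxi$ in the role of the comparison Gaussian, then gives, for every $t$,
\begin{equation}
\P\Big(\sup_{(\bw,\bv,\bu)\in\tcT}\big\{\bw^\top\bWtilde\bw + \tilde f(\bw,\bv,\bu)\big\}\geq t\Big)
\;\leq\;
\P\Big(\sup_{(\bw,\bv,\bu)\in\tcT}\Big\{\tfrac{2}{\sqrt n}\,\|\bw\|\,\langle\bxi,\bw\rangle + \tilde f(\bw,\bv,\bu)\Big\}\geq t\Big).
\end{equation}

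Finally I would translate both sides back to the original variables by substituting $\bw = \proj_{\bR}^\perp\bv$. On the left, the distributional identity from the first paragraph turns $\sup_{\tcT}\{\bw^\top\bWtilde\bw + \tilde f\}$ into $\sup_{(\bv,\bu)\in K}\{\bv^\top\bW\bv + f(\bv,\bu;\bR,\bS)\}$ conditionally on $\bW\bR=\bS$, which is precisely the left-hand side of the claimed inequality. On the right, using the symmetry of $\proj_{\bR}^\perp$ to write $\langle\bxi,\proj_{\bR}^\perp\bv\rangle = \langle\proj_{\bR}^\perp\bxi,\bv\rangle$ and combining with the term $2\,\bv^\top\bT_{\SF}\bv$, one gets
\begin{equation}
\tfrac{2}{\sqrt n}\,\|\proj_{\bR}^\perp\bv\|\,\langle\bxi,\proj_{\bR}^\perp\bv\rangle + 2\,\bv^\top\bT_{\SF}\bv
\;=\;
\tfrac{2}{\sqrt n}\,\langle\bg_{\SF}(\bv),\bv\rangle
\end{equation}
for $\bg_{\SF}$ as in Eq.~\eqref{eq:def-gSF}, so the right-hand side above equals $\P\big(\sup_{(\bv,\bu)\in K}\{\tfrac{2}{\sqrt n}\langle\bg_{\SF}(\bv),\bv\rangle + f(\bv,\bu;\bR,\bS)\}\geq t\big)$, which completes the argument. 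The argument is essentially bookkeeping once Lemma \ref{lem:W-det} is available; the one point to get right is the choice of change of variables ($\bw = \proj_{\bR}^\perp\bv$ as the optimization vector, with $(\bv,\bu)$ demoted to auxiliary variables) together with the projector identities, so that the comparison side of Proposition \ref{lem:slepian} collapses exactly onto the prescribed $\bg_{\SF}$. No subspace-specific reasoning about Gaussian increments is needed, since Proposition \ref{lem:slepian} already permits an arbitrary closed constraint set and an arbitrary continuous objective depending on both the optimization and auxiliary variables.
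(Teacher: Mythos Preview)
Your proposal is correct and follows the same approach as the paper: apply Lemma \ref{lem:W-det} to replace $\bW$ by $\bT_{\SF}+\bT_{\SF}^\top+\proj_{\bR}^\perp\bWtilde\proj_{\bR}^\perp$, then invoke Proposition \ref{lem:slepian} on the $\bWtilde$ piece and recombine. Your explicit change of variables $\bw=\proj_{\bR}^\perp\bv$ with $(\bv,\bu)$ demoted to auxiliary variables is a slightly more careful way of matching the exact hypotheses of Proposition \ref{lem:slepian} than the paper's one-line appeal, but the substance is identical.
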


\begin{proof}[Proof of Theorem \ref{thm:cond-SF}]
    By Lemma \ref{lem:W-det}, 
    for fixed $\bR,\bS \in \reals^{n \times k}$
    \begin{equation}
    \begin{aligned}
        &\P\left(
            \sup_{(\bv,\bu) \in S} \big\{ \bv^\top \bW \bv + f(\bv,\bu; \bR,\bS) \big\} \geq t 
            \Bigm|
            \bW \bR = \bS 
        \right)
        \\
        &\qquad\qquad =
        \P\left(
            \sup_{(\bv,\bu) \in S} \big\{ \bv^\top \proj_{\bR}^\perp \bWtilde \proj_{\bR}^\perp \bv + \frac{2}{\sqrt{n}} \< \bT_{\SF} \bv, \bv \> + f(\bv,\bu; \bR,\bS) \big\} \geq t 
            \Bigm|
            \bW \bR = \bS 
        \right).
    \end{aligned}
    \end{equation}
    Because $\bWtilde$ is the only part of the optimization in the second probability which is random,
    and it is conditionally distributed $\GOE(n)$, 
    by the Sudakov-Fernique inequality (Proposition \ref{lem:slepian}),
    the right-hand side of the previous display is bounded by
    \begin{equation}
        \mathrm{RHS}
            \leq 
            \P\left(
            \sup_{(\bv,\bu) \in S} \Big\{ \frac{2}{\sqrt{n}}\| \proj_{\bR}^\perp \bv \|_2 \< \proj_{\bR}^\perp \bxi ,  \bv\>  + \frac{2}{\sqrt{n}} \<\bT_{\SF} \bv,  \bv \> + f(\bv,\bu; \bR,\bS) \Big\} \geq t
        \right).
    \end{equation}
    Note that the objective in the previous display is nothing but $\frac2{\sqrt{n}}\< \bg_{\SF}(\bv),\bv\> + f(\bv,\bu;\bR,\bS)$,
    which gives the result.
\end{proof}

\subsection{Sudakov-Fernique post-AMP}
\label{sec:proof-of-SF-post-AMP}

Theorem \ref{thm:SF-post-AMP-gen} and Corollary \ref{cor:SF-post-AMP-Z2} follow from Theorem \ref{thm:cond-SF} 
with matrices explicitly related to the AMP iteration,
at the cost of incurring an error which vanishes as $n \rightarrow \infty$.
Our main task is to show this error is small.

\begin{proof}[Proof of Theorem \ref{thm:SF-post-AMP-gen}]
    Recall the $\bM \in \reals^{n \times k}$ is the matrix whose $s^\text{th}$ column is given by $\bbm^{s-1}$ and $\bG \in \reals^{n \times k}$ is the matrix whose $s^\text{th}$ column is given by $\bg^s$, 
    which are generated by the AMP algorithm Eq.~\eqref{eq:AMP-gen}.
    Let 
    \begin{equation}
        \bB 
        := 
        \begin{pmatrix}
            0      & \sb_{11} & \sb_{21}     & \cdots & \sb_{(k-1)1} \\[5pt]
            0      & 0     & \sb_{22} & & \sb_{(k-1)2} \\[5pt]
            \vdots &       & \ddots     & \ddots & \vdots \\[5pt]
            0 &  0     &       & 0 & \sb_{(k-1)(k-1)} \\[5pt]
            0      & 0     &    \cdots   & 0 & 0
        \end{pmatrix},
    \end{equation}
    for $\sb_{sj}$ defined in Eq.~\eqref{eq:onsag-coef}.
    We define two matrix-valued functions of the random matrix $\bW$ and noise $\bg^0$
    given by $\bS_{\AMP}(\bW,\bg^0) = \bM$ and $\bR_{\AMP}(\bW,\bg^0) = \bG + \bM \bB$,
    where we recall that $\bG,\bM$ is a function of $\bW,\bg^0$ via the AMP iteration \eqref{eq:AMP-gen}.
    Consider any fixed matrices $\bR \in \reals^{n \times k}$ and $\bS \in \reals^{n \times k}$.
    By the AMP iteration Eq.~\eqref{eq:AMP-gen},
    we may check inductively the equivalence of events (over $\bW,\bg^0$)
    \begin{equation}
        \Big\{ \bS_{\AMP}(\bW,\bg^0) = \bS,\; \bR_{\AMP}(\bW,\bg^0) = \bR \Big\}
        = 
        \Big\{ \bW \bR = \bS ,\; f(\bg_0) = \bS_{\,\cdot\,,1}\Big\}.
    \end{equation}
    Thus, conditioning on the sequence of AMP iterates $\bbm^0,\bbm^1,\ldots,\bbm^{k-1}$ and $\bg^0,\bg^1,\ldots,\bg^k$ is equivalent to conditioning on the realization of $\bg^0$, $\bS_{\AMP}(\bW)$ and $\bR_{\AMP}(\bW)$,
    which by the previous display is equivalent to conditioning on $\bg^0$ and a set of linear constraints on $\bW$ of the form handled by Theorem \ref{thm:cond-SF}.

    The key insight is that with high probability over $\bW,\bg^0$, 
    the deterministic matrix $\bB$ approximately solves Eq.~\eqref{eq:B-def}.
    Precisely,
    there exists a random matrix $\bB_{\SF} = \bB_{\SF}(\bW,\bg^0)$
    such that,
    with probability 1 over $\bW,\bg^0$,
    \begin{equation}
    \label{eq:B-SF-gen}
        \bR_{\AMP}^\top \bS_{\AMP}
            =
            (\bR_{\AMP}^\top \bR_{\AMP})\bB_{\SF}
            + 
            \bB_{\SF}^\top (\bR_{\AMP}^\top \bR_{\AMP}),
    \end{equation}
    and 
    \begin{equation}
    \label{eq:B-approx-gen}
        \| \bB_{\SF} - \bB \|_{\op} \pconv 0,
    \end{equation}
    where for compactness of notation,
    we have removed the arguments from $\bS_{\AMP}(\bW,\bg^0)$, $\bR_{\AMP}(\bW,\bg^0)$.
    Indeed,
    \begin{equation}
    \label{eq:R-S-conv}
        \frac1n
        \Big(
            \bR_{\AMP}^\top \bS_{\AMP} 
            - \bR_{\AMP}^\top \bR_{\AMP} \bB 
            - \bB \bR_{\AMP}^\top \bR_{\AMP}
        \Big)
        \pconv 0,
    \end{equation}
    because, by Proposition \ref{prop:se-gen},
    \begin{equation}
        \frac1n \bR_{\AMP}^\top \bR_{\AMP} 
            =
            \frac1n \bM^\top \bM \pconv \bK_{\leq k},
        \qquad 
        \frac1n \bR_{\AMP}^\top \bS_{\AMP}
            =
            \frac1n \bM^\top (\bG + \bM \bB)
            \pconv \bB^\top \bK_{\leq k} + \bK_{\leq k} \bB,
    \end{equation}
    where the second convergence uses Gaussian integration by parts: for $1 \leq s \leq t$,
    we have $\E[G_s M_t] = \E[G_s f_t(G_0,\ldots,G_t)] = \sum_{j=1}^t K_{sj}\sb_{sj}$,
    with $G_s,M_t$ as in Proposition \ref{prop:se-gen}.
    Thus,
    we may pick 
    \begin{equation}
        \bB_{\SF} 
            =
            \bB
                + 
                \frac12 (\bR_{\AMP}^\top \bR_{\AMP})^\dagger 
                \Big(
                    \bR_{\AMP}^\top \bS_{\AMP}
                    - 
                    \bR_{\AMP}^\top \bR_{\AMP} \bB
                    -
                    \bB^\top \bR_{\AMP}^\top \bR_{\AMP}
                \Big).
    \end{equation}
    Then Eq.~\eqref{eq:B-SF-gen} holds by definition.
    Because $\bK_{\leq k} \succ 0$,
    the second-to-last display implies Eq.~\eqref{eq:B-approx-gen}.

    Now,
    define $\bG_{\SF}$, $\bT_{\SF}$, and $\bg_{\SF}(\bv)$ 
    as in Lemma \ref{lem:W-det} and Theorem \ref{thm:cond-SF}.
    In particular, 
    define
    \begin{equation}
    \label{eq:def-gamp}
        \bG_{\SF} = \bS_{\AMP} - \bR_{\AMP} \bB_{\SF},
        \quad 
        \bT_{\SF} = \bG_{\SF}(\bR_{\AMP}^\top \bR_{\AMP})^{-1} \bR_{\AMP}^\top,
        \quad 
        \bg_{\SF}(\bv) = \sqrt{n}\, \bT_{\SF} \bv + \| \proj_{\bR_{\AMP}}^\perp \bv \| \proj_{\bR_{\AMP}}^\perp \bxi,
    \end{equation}
    where $\bxi \sim \normal(0,\id_n)$ is the same Gaussian used in the definition of $\bg_{\AMP}(\bv)$ in the statement of the theorem.
    Note the inverse $(\bR_{\AMP}^\top \bR_{\AMP})^{-1}$ exists with probability approaching 1 because $\bR_{\AMP}^\top \bR_{\AMP} / n \pconv \bK_{\geq k}$ by Proposition \ref{prop:se-gen}.
    By Eq.~\eqref{eq:B-approx-gen},
    and because  $\| \bR_{\AMP} \|_{\op}/\sqrt{n} \pconv \| \bK_{\leq k} \|_{\op}^{1/2}$ by Proposition \ref{prop:se-gen},
    we get 
    $\| \bG/\sqrt{n} - \bG_{\SF}/\sqrt{n} \|_{\op} \leq (\| \bR_{\AMP} \|_{\op}/\sqrt{n}) \| \bB - \bB_{\SF} \|_{\op} \pconv 0$.
    We then conclude that
    \begin{equation}
        \| \bT - \bT_{\SF} \|_{\op}  
            \leq 
            \Big\|
                \frac1{\sqrt{n}} \bG - \frac1{\sqrt{n}}\bG_{\SF}
            \Big\|_{\op}
            \| (\bR_{\AMP}^\top \bR_{\AMP}/n)^{-1} \|_{\op} 
            \Big\| \frac1{\sqrt{n}} \bR_{\AMP} \Big\|_{\op}
            \pconv 0,
    \end{equation}
    where we use that $\bR_{\AMP}^\top \bR_{\AMP}/n \pconv \bK_{\leq k} \succ 0$ has positive definite limit.
    Finally, note 
    $\big\| \bg_{\AMP}(\bv) - \bg_{\SF}(\bv) \big\|/\sqrt{n}
        =
        \big\| (\bT\bv - \bT_{\SF}\bv) + \| \proj_{\bR_{\AMP}}^\perp \bv \| (\proj_{\bR_{\AMP}}\bxi)/\sqrt{n} \big\|
        \leq 
        \big(\| \bT - \bT_{\SF} \|_{\op} + \| \proj_{\bR_{\AMP}} \bxi \|/\sqrt{n}\big) \| \bv \|,$
    whence
    \begin{equation}
    \begin{aligned}
        \sup_{\| \bv \| \leq R}
        \Big|
            \frac2{\sqrt{n}} \< \bg_{\AMP}(\bv) , \bv \>
            -
            \frac2{\sqrt{n}} \< \bg_{\SF} (\bv) , \bv \>
        \Big|
        &\leq \sup_{\| \bv \| \leq R} 2\|\bv\|^2
            \Big( 
                \| \bT - \bT_{\SF} \|_{\op} 
                + 
                \frac{\|\proj_{\bR_{\AMP}}\bxi\|}{\sqrt{n}}
            \Big)
        \\
        &\leq 2R^2\Big(\| \bT - \bT_{\SF} \|_{\op}+\frac{\|\proj_{\bR_{\AMP}}\bxi\|}{\sqrt{n}}\Big)
        \pconv 0.
    \end{aligned}
    \end{equation}
    
    Now Theorem \ref{thm:SF-post-AMP-gen} follows from an application of the conditional Sudakov-Fernique inequality and the approximation bound in the previous display.
    Indeed,
    applying the conditional Sudakov-Fernique inequality, 
    we have for any fixed $\bR,\bS,\bg^0$ and any $t \in \reals$ and $\epsilon > 0$,
    \begin{equation}
    \begin{aligned}
        &\P\left(
            \sup_{(\bv,\bu) \in K} \big\{ \bv^\top \bW \bv + f(\bv,\bu; \bR,\bS - \bR \bB,\bg^0) \big\} \geq t -\epsilon
            \Bigm|
            \bR_{\AMP}(\bW) = \bR, \bS_{\AMP}(\bW) = \bS, \bg^0
        \right)
    \\
        &\leq 
        \P\left(
            \sup_{(\bv,\bu) \in K} \Big\{ \frac2{\sqrt{n}}\< \bg_{\SF}(\bv),\bv\> + f(\bv,\bu;\bR,\bS - \bR \bB,\bg^0) \Big\} \geq t -\epsilon
        \right)
    \\
        &\leq 
            \P\left(
                \sup_{(\bv,\bu) \in S} \Big\{ \frac2{\sqrt{n}}\< \bg_{\AMP}(\bv),\bv\> + f(\bv,\bu;\bR,\bS - \bR \bB,\bg^0) \Big\} \geq t
            \right)
    \\
        &\qquad\qquad\qquad\qquad\qquad\qquad\qquad\qquad\qquad+
            \P\left(
                \sup_{\| \bv \| \leq R}
                \Big|
                    \frac2{\sqrt{n}} \< \bg_{\AMP}(\bv) , \bv \>
                    -
                    \frac2{\sqrt{n}} \< \bg_{\SF} (\bv) , \bv \>
                \Big|
                > \epsilon
            \right).
    \end{aligned}
    \end{equation}
    Now we marginalize over $\bW,\bg^0$ and
    take $n \rightarrow \infty$,
    whence the second term on the right-hand side vanishes.
    Taking then $\epsilon \rightarrow 0$ gives Theorem \ref{thm:SF-post-AMP-gen}.
\end{proof}

\begin{proof}[Proof of Corollary \ref{cor:SF-post-AMP-Z2}]
    We can prove Corollary \ref{cor:SF-post-AMP-Z2} viewing $\bx$ as fixed, and then marginalizing over $\bx$.
    Thus, we now consider $\bx$ as fixed, and it does not show up explicitly in the proof.

    Corollary \ref{cor:SF-post-AMP-Z2} holds by exactly the same argument in the proof of Theorem \ref{thm:SF-post-AMP-gen},
    except with choices of $\bS_{\AMP}$, $\bR_{\AMP}$, and $\bB$ tailored to the iteration Eq.~\eqref{eq:AMP-Z2}.
    In particular, 
    now we choose 
    \begin{equation}
    \bB
        := 
        \begin{pmatrix}
            0      & \sbhat_1 & 0     & \cdots & 0 \\[5pt]
            0      & 0     & \sbhat_2 & & 0 \\[5pt]
            \vdots &       & \ddots     & \ddots & \vdots \\[5pt]
            0 &  0     &       & 0 & \sbhat_{k-1} \\[5pt]
            0      & 0     &    \cdots   & 0 & 0
        \end{pmatrix},
    \end{equation}
    and $\bS_{\AMP}(\bW,\bg^0) = \lambda \bM$ and $\bR_{\AMP}(\bW,\bg^0) = \bG + \lambda \bM \bB$,
    where $\bM$ and $\bG$ are as defined in Proposition \ref{prop:Z2-se}.
    With these choices and provided we interpret $\bK_{\leq k}$ as that state evolution matrix appearing in Proposition \ref{prop:Z2-se}, 
    the remainder of the proof goes through verbatim, except that Eq.~\eqref{eq:R-S-conv} 
    requires a slightly different justification.
    Now we have that $\bB$ is a random matrix,
    but by Proposition \ref{prop:Z2-se},
    it converges to the deterministic limit
    \begin{equation}
        \bB \pconv \tilde \bB := 
            \begin{pmatrix}
            0      & \sb_1 & 0     & \cdots & 0 \\[5pt]
            0      & 0     & \sb_2 & & 0 \\[5pt]
            \vdots &       & \ddots     & \ddots & \vdots \\[5pt]
            0 &  0     &       & 0 & \sb_{k-1} \\[5pt]
            0      & 0     &    \cdots   & 0 & 0
        \end{pmatrix},
    \end{equation}
    where $\sb_s = \lambda(1 - \E[\tanh^2(\gamma_s + \sqrt{\gamma_s}G)]$, where $G \sim \normal(0,1)$.
    Thus,
    \begin{equation}
        \frac1n \bR_{\AMP}^\top \bR_{\AMP} 
            =
            \frac1n \lambda^2 \bM^\top \bM \pconv \bK_{\leq k},
        \qquad 
        \frac1n \bR_{\AMP}^\top \bS_{\AMP}
            =
            \frac1n \lambda \bM^\top (\bG + \lambda \bM \bB)
            \pconv \tilde \bB^\top \bK_{\leq k} + \bK_{\leq k} \tilde \bB,
    \end{equation}
    which implies Eq.~\eqref{eq:R-S-conv}. 
    The remainder of the proof proceeds verbatim as before.
\end{proof}

\section{Auxiliary proofs}

\subsection{Proof of Proposition \ref{prop:Z2-se}}

Proposition \ref{prop:Z2-se} is equivalent to the state evolution results given in \cite{deshpande2016,celentanoFanMei2021,AlaouiMontanariSelke2022}.
Because we use slightly different notation,
we provide some details here for completeness.
As in Section \ref{sec:proofs},
by symmetry we may assume without loss of generality that $\bx = \ones$ and $X = 1$.

That $\gamma_s > 0$ for all $s$ holds by induction.
First, $\gamma_0 > 0$ by assumption. Then, if $ \gamma_s > 0$,
also $\gamma_{s+1} > 0$, because $\tanh^2(\gamma_s + \sqrt{\gamma_s}G)$ is positive almost surely.
Further the equation 
\begin{equation}
    \gamma = \lambda^2 \E[\tanh^2(\gamma + \sqrt{\gamma}G)] + \chi_{\sx} \gamma_0,        
\end{equation}
has a unique solution positive solution $\gamma_\infty > 0$:
for $\sx = \FMM$ and $\lambda > 1$, this is given by \cite[Proposition A.2]{celentanoFanMei2021}, and for $\sx = \AMS$ and $\lambda > 0$, this is given by \cite[Lemma 4.5]{AlaouiMontanariSelke2022} (note their $\gamma^*$ is our $\gamma_\infty - \gamma_0$ and their $t$ is our $\gamma_0$).
Moreover, by \cite[Lemma 6.1(a)]{deshpande2016} or \cite[Lemma 4.5(a)]{AlaouiMontanariSelke2022},
$\gamma \mapsto \lambda^2 \E[\tanh^2(\gamma + \sqrt{\gamma}G)]$ is strictly increasing and concave in $\gamma$.
Thus, $\gamma_s$ is strictly increasing and $\gamma_s \rightarrow \gamma_\infty$, provided $\gamma_0 < \gamma_\infty$. 
For $\sx = \AMS$, 
this holds by Eq.~\eqref{eq:def-gamp}, because $\chi_{\AMS} = 1$.
For $\sx = \FMM$, 
this holds by assumption.
Because $\gamma_s$ is strictly increasing, $\bGamma_{\leq k}$ is positive definite.
Thus, we have established items (i) and (iv).

Item (v) in the case $\sx = \FMM$ and $\lambda > 1$ is given by \cite[Proposition A.2]{celentanoFanMei2021}.
The case $\sx = \AMS$ is given by essentially the same argument appearing in \cite{celentanoFanMei2021},
which we provide here.
Indeed, consider a scalar observation $\gamma_\infty X + \sqrt{\gamma_\infty} G$, with $X \sim \Unif\{-1,+1\}$ and $G \sim \normal(0,1)$ independent. 
Then $\tanh(Y) = \E[X|Y]$, 
and its Bayes risk is given by $\E[(\tanh(Y) - X)^2] = 1 - \E[\tanh^2(Y)] = 1 - q_\infty$.
We can compare this to the risk of the linear estimator $(1 + \gamma_\infty)^{-1} Y$,
whose risk is easily computed to be $1/ (1+\gamma_\infty) \leq 1 / (1 + \lambda^2 q_\infty)$,
where the inequality holds by Eq.~\eqref{eq:def-gam-infty}.
Because the $\tanh(Y)$ and $(1 + \gamma_\infty)^{-1}Y$ differ on a set of positive probability,
we get $1 - q_\infty < 1 / (1 + \lambda^2q_\infty)$,
which rearranges to the first line in the display in item (v).
The second two lines follow from \cite[Appendix B.2]{deshpande2016},
also cited in \cite[Proposition A.2]{celentanoFanMei2021}.

Item (ii) follows from the state evolution result of \cite{bayatiMontanari2010},
together with some simplifications specific to the $\Z_2$-synchronization model,
as in \cite[Eqs. (69),(70)]{deshpande2016} and \cite[Proposition 4.4]{AlaouiMontanariSelke2022}.
Indeed, with $\bK_{\leq k}$ as defined in the lemma and using Eq.~\eqref{eq:AMP-Z2-se},
we have
\begin{equation}
\begin{aligned}
    K_{s+1,s+1}
        &= 
        \lambda^2 \E[\tanh(K_{s,s} + \chi_{\sx}\gamma_0 + \sqrt{K_{s,s} + \chi_{\sx}\gamma_0} G )].
\end{aligned}
\end{equation}
Moreover, as in the proof of \cite[Proposition 4.4]{AlaouiMontanariSelke2022},
\begin{equation}
\begin{aligned}
    &\E[\tanh(K_{s,s} + \chi_{\sx}\gamma_0 + \sqrt{K_{s,s} + \chi_{\sx}\gamma_0} G )\tanh(K_{t,t} + \chi_{\sx}\gamma_0 + \sqrt{K_{t,t} + \chi_{\sx}\gamma_0} G )]
    \\
    &\qquad\qquad =
    \E[\tanh(K_{s\wedge t,s\wedge t} + \chi_{\sx}\gamma_0 + \sqrt{K_{s\wedge t,s\wedge t} + \chi_{\sx}\gamma_0} G )] = K_{s+1,t+1}.
\end{aligned}
\end{equation}
Thus, $\bK_{\leq k}$,
as we have defined it,
satisfies the state evolution recursions as given in \cite{bayatiMontanari2010}.
(For another instance of this these type of state evolution simplifications, 
see Eqs.~(4.16) and (4.17) and Proposition 4.4 of \cite{AlaouiMontanariSelke2022}, which should be compared to the previous two displays).
Item (ii) now follows.
Moreover, note that the above two displays are equivalent to Item (iii).
The proof is complete.

\subsection{Proof of Lemma \ref{lem:det-ub}}
\label{sec:proof-lem-det-ub}

We first introduce the following notation:
for any $\mu \in W_2(\reals \times (-1,1)^{k+1} \times \reals^{k+2})$, 
let $\mu_1$ denote the marginal of its first two coordinates, and $\mu_2$ denotes the marginal of its last $2k+2$ coordinates.
Thus, if $(V,U,\brevebM,\brevebG,G_0,\Xi) \sim \mu$, 
then $(V,U) \sim \mu_1$ and $(\brevebM,\brevebG,\Xi) \sim \mu_2$.
Moreover, throughout the proof, $\epsilon(\cdot)$ will denote a $\reals_{>0} \cup \{ \infty \}$ valued function, possibly depending on $\lambda$ and $\bK_{\leq k}$,
with the property that $\epsilon(\delta) \rightarrow 0$ as $\delta \downarrow 0$.
We will also denote the space $W_2(\reals \times (-1,1)^{k+1} \times \reals^{k+1})$ with the short-hand $W_2$.
Moreover, denote $\muhat := \muhat_{\sqrt{n}\bv,\bu,\bM,\bG,\bg^0,\bxi}$,
so that $\muhat_1 = \muhat_{\sqrt{n}\bv,\bu}$ and $\muhat_2 = \muhat_{\bM,\bG,\bg^0,\bxi}$.

The continuity property we need is given by the following lemma.

\begin{lemma}\label{lem:cL-cont}
    Consider $\| \bv \| \leq 1$ and $ \bu \in [-1,1]^n$.
    Then there exists $\mu \in W_2(\reals \times (-1,1)^{k+1} \times \reals^{k+1})$ 
    such that $\mu_1 = \muhat_1$, $\mu_2 = \SE_{\sx,k}$, and
    \begin{equation}
        \big|
            \sF^{(1)}_{\sx,k}(\muhat) - \sF^{(1)}_{\sx,k}(\mu)
        \big|
            \leq 
            \epsilon\big( W_2(\muhat_2,\SE_{\sx,k}) \big),
    \end{equation}
    Moreover, $\mu$ can be chose so that additionally, for $(U^{(1)},V^{(1)},\brevebM^{(1)},\brevebG^{(1)},G_0^{(1)},\Xi^{(1)}) \sim \muhat$ and
    $(U^{(2)},V^{(2)},\allowbreak\brevebM^{(2)},\allowbreak\brevebG^{(2)},\allowbreak G_0^{(2)},\Xi^{(2)}) \sim \mu$,
    \begin{equation}
        \big|
            \big\| U^{(1)} - M_{k-1}^{(1)}\big\|_{L_2} 
            -
            \big\| U^{(2)} - M_{k-1}^{(2)}\big\|_{L_2} 
        \big| \leq W_2(\muhat_2,\SE_{\sx,k}).
    \end{equation}
\end{lemma}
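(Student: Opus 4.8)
The plan is to construct the coupling $\mu$ explicitly by gluing: we will keep the empirical pair $\muhat_1 = \muhat_{\sqrt{n}\bv,\bu}$ exactly, replace the block $\muhat_2 = \muhat_{\bM,\bG,\bg^0,\bxi}$ by the idealized state-evolution law $\SE_{\sx,k}$, and couple the two blocks so that the joint distribution is close (in $W_2$) to $\muhat$ itself. Concretely, let $(V^{(1)},U^{(1)},\brevebM^{(1)},\brevebG^{(1)},G_0^{(1)},\Xi^{(1)}) \sim \muhat$ and let $\Pi$ be an optimal $W_2$-coupling between $\muhat_2$ and $\SE_{\sx,k}$, realized as a pair $((\brevebM^{(1)},\brevebG^{(1)},G_0^{(1)},\Xi^{(1)}),(\brevebM^{(2)},\brevebG^{(2)},G_0^{(2)},\Xi^{(2)}))$ with $\E[\|(\brevebM^{(1)},\brevebG^{(1)},G_0^{(1)},\Xi^{(1)}) - (\brevebM^{(2)},\brevebG^{(2)},G_0^{(2)},\Xi^{(2)})\|^2]^{1/2} = W_2(\muhat_2,\SE_{\sx,k})$. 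Then $\mu := \Law(V^{(1)},U^{(1)},\brevebM^{(2)},\brevebG^{(2)},G_0^{(2)},\Xi^{(2)})$ has $\mu_1 = \muhat_1$ and $\mu_2 = \SE_{\sx,k}$ by construction. The second displayed inequality is then immediate: $\big|\|U^{(1)} - M_{k-1}^{(1)}\|_{L_2} - \|U^{(2)} - M_{k-1}^{(2)}\|_{L_2}\big| = \big|\|U^{(1)} - M_{k-1}^{(1)}\|_{L_2} - \|U^{(1)} - M_{k-1}^{(2)}\|_{L_2}\big| \le \|M_{k-1}^{(1)} - M_{k-1}^{(2)}\|_{L_2} \le W_2(\muhat_2,\SE_{\sx,k})$, using the reverse triangle inequality and that the last coordinate-difference is dominated by the full vector norm.

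The substantive step is the continuity bound $\big|\sF^{(1)}_{\sx,k}(\muhat) - \sF^{(1)}_{\sx,k}(\mu)\big| \le \epsilon(W_2(\muhat_2,\SE_{\sx,k}))$. I would go term by term through the definition \eqref{eq:Leps-dist}. The terms $\lambda^2\E[V]^2$, $\E[V^2/(1-U^2)]$, and (for $\FMM$) $\lambda^2(1-\|U\|_{L_2}^2)$ and $2\lambda^2\langle V,U\rangle_{L_2}^2$ depend only on $\mu_1 = \muhat_1$, hence are literally unchanged. So only the coupling term $2\lambda\langle G_{\AMP},V\rangle_{L_2}$ moves, where $G_{\AMP} = \lambda^{-1}\brevebG^\top \E[\brevebM\brevebM^\top]^\dagger \E[\brevebM V] + \|\proj_{\brevebM}^\perp V\|_{L_2}\Xi$. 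The change in this term has two sources: (a) the linear-combination coefficients $\E[\brevebM\brevebM^\top]^\dagger \E[\brevebM V]$ and $\|\proj_{\brevebM}^\perp V\|_{L_2}$ change because the $\brevebM$-block changes its law from $\muhat_2$'s marginal to $\SE_{\sx,k}$'s marginal (which is $\normal(0,\lambda^2\E[\brevebM\brevebM^\top])$-related), and (b) the vectors $\brevebG,\Xi$ appearing in $G_{\AMP}$ change. For (a), one needs that $\E[\brevebM\brevebM^\top]$ stays bounded away from singular; under $\SE_{\sx,k}$ this is $\lambda^{-2}\bK_{\leq k} \succ 0$ by Proposition \ref{prop:Z2-se}(i)--(ii), and $W_2$-closeness of $\muhat_2$ to $\SE_{\sx,k}$ forces $\muhat$'s second-moment matrix to be within $\epsilon(W_2(\muhat_2,\SE_{\sx,k}))$ of it, so the pseudoinverse is continuous there (this is where the $\epsilon(\cdot)$ function, allowed to blow up only as its argument does, is used — and why one should state the bound with $\epsilon(\cdot)$ rather than a Lipschitz constant). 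Moments of $V$ are controlled because $\|V\|_{L_2} = \|\sqrt{n}\bv\|/\sqrt{n} \le 1$; moments of $\brevebM, U$ because they are bounded in $(-1,1)$; moments of $\brevebG, G_0, \Xi$ are controlled by $W_2$-closeness to $\SE_{\sx,k}$ (which has all moments). Then Cauchy--Schwarz bounds each contribution to $|\langle G_{\AMP}^{(1)},V^{(1)}\rangle - \langle G_{\AMP}^{(2)},V^{(1)}\rangle|$ by (operator/coefficient perturbation)$\times$(second moment of $V$ or of $\brevebG,\Xi$), all of which are $\le \epsilon(W_2(\muhat_2,\SE_{\sx,k}))$.

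The main obstacle, and the place to be careful, is exactly the continuity of $G_{\AMP}$ in its defining empirical/population distribution — specifically the continuity of the map $\mu \mapsto (\E[\brevebM\brevebM^\top]^\dagger \E[\brevebM V], \|\proj_{\brevebM}^\perp V\|_{L_2})$ near $\SE_{\sx,k}$. The pseudoinverse is only continuous on sets of matrices of constant rank, so the argument genuinely needs the non-degeneracy $\bK_{\leq k} \succ 0$ (Proposition \ref{prop:Z2-se}(i),(ii)) together with the observation that $W_2$-convergence of the $\brevebM$-block implies convergence of its Gram matrix, hence eventual constant (full) rank; for the empirical law $\muhat_2$ which may a priori be degenerate, one should phrase the bound as: \emph{if} $W_2(\muhat_2,\SE_{\sx,k})$ is small enough that $\E_{\muhat_2}[\brevebM\brevebM^\top] \succ \tfrac12 \lambda^{-2}\bK_{\leq k}$ then the stated perturbation bound holds, and otherwise set $\epsilon(\cdot) = \infty$ there, which is permitted by the convention on $\epsilon(\cdot)$. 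Everything else — the term-by-term bookkeeping, the Cauchy--Schwarz estimates, bounding moments of $\brevebG,\Xi$ via $W_2$-closeness to a Gaussian, and the trivial last inequality — is routine and I would not grind through it in detail.
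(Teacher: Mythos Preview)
Your proposal is correct and follows essentially the same approach as the paper: construct $\mu$ via the optimal $W_2$-coupling of $\muhat_2$ with $\SE_{\sx,k}$ glued to $(V,U)\sim\muhat_1$, observe that only the $2\lambda\langle G_{\AMP},V\rangle_{L_2}$ term can change, and bound $\|G_{\AMP}^{(1)}-G_{\AMP}^{(2)}\|_{L_2}$ using continuity of the pseudoinverse near the nondegenerate limit $\lambda^{-2}\bK_{\leq k}\succ 0$ together with Cauchy--Schwarz. Your explicit handling of the degenerate case via $\epsilon(\cdot)=\infty$ is a small refinement over the paper's presentation, but otherwise the arguments coincide.
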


\begin{proof}[Proof of Lemma \ref{lem:cL-cont}]
    Throughout the proof, $\epsilon(\,\cdot\,)$ is a function with the properties described in the lemma, but which may change at each appearance.

    Let $\Pi$ be the optimal coupling of $\muhat_2$ and $\SE_{\sx,k}$ (whose existence is guaranteed by \cite[Theorem 4.1]{villani2008optimal}).
    By the gluing lemma (see, e.g., \cite[pg.~23]{villani2008optimal}),
    there exist random variables $U,V$, $\brevebM^{(1)},\brevebG^{(1)},G_0^{(1)},\Xi^{(1)}$, $\brevebM^{(2)},\brevebG^{(2)},G_0^{(2)},\Xi^{(2)}$ on a shared probability space such that 
    $(U,V,\brevebM^{(1)},\brevebG^{(1)},G_0^{(1)},\Xi^{(1)}) \sim \muhat$
    and  
    $(\brevebM^{(1)},\brevebG^{(1)},\allowbreak G_0^{(1)},\allowbreak \Xi^{(1)},\allowbreak\brevebM^{(2)},\allowbreak\brevebG^{(2)},G_0^{(2)},\Xi^{(2)}) \sim \Pi$.
    We will show that the result holds for $\mu$ taken equal to the joint distribution of $(U,V,\allowbreak\brevebM^{(2)},\allowbreak\brevebG^{(2)},G_0^{(2)},\Xi^{(2)})$ under this coupling.

    First observe that the second display of the lemma follows by taking $U^{(1)} = U^{(2)} = U$ and using that $\big\| M_{k-1}^{(1)} - M_{k-2}^{(2)} \big\|_{L_2} \leq  W_2(\muhat_2,\SE_{\sx,k})$ by the optimality of the coupling $\Pi$.
    To derive the first display,
    observe that
    we may compute $\sF^{(1)}_{\sx,k}(\muhat)$ by plugging $U,V,\brevebM^{(1)},\brevebG^{(1)},G_0^{(1)},\Xi^{(1)}$ into Eq.~\eqref{eq:Leps-dist},
    and we may compute $\sF^{(1)}_{\sx,k}(\mu)$ by plugging $U,V,\brevebM^{(2)},\brevebG^{(2)},G_0^{(2)},\Xi^{(2)}$ into Eq.~\eqref{eq:Leps-dist},
    whence
    \begin{equation}
    \begin{aligned}
        \big|
                \sF^{(1)}_{\sx,k}(\muhat) - \sF^{(1)}_{\sx,k}(\mu)
            \big|
            &=
            2\lambda \big| \< G_{\AMP}^{(1)} , V \>_{L_2} - \< G_{\AMP}^{(2)} , V \>_{L_2} \big|
            \leq 
            2\lambda \big\| G_{\AMP}^{(1)} - G_{\AMP}^{(2)} \big\|_{L_2},
    \end{aligned}
    \end{equation}
    where $G_{\AMP}^{(1)}$ and $G_{\AMP}^{(2)}$ are defined as in Eq.~\eqref{eq:G-AMP},
    with respect to $V,\brevebM^{(1)},\brevebG^{(1)},\Xi^{(1)}$ and $V,\brevebM^{(2)},\brevebG^{(2)},\Xi^{(2)}$, respectively.

    Note that $\lambda^2 \E\big[\brevebM^{(2)}\brevebM^{(2)\top}\big]^{-1} = \bK_{\leq k}^{-1}$,
    and because $\bK_{\leq k} \succ 0$, 
    $\big\| \E\big[\brevebM^{(1)}\brevebM^{(1)\top}\big]^{-1} - \E\big[\brevebM^{(2)}\brevebM^{(2)\top}\big]^{-1} \big\|_{\op} \leq \eps( \| \brevebM^{(1)} - \brevebM^{(2)} \|_{L_2} ) \leq \epsilon\big(W_2(\muhat_2,\SE_{\sx,k})\big)$.
    Similarly,
    $\big\| \E\big[\brevebM^{(1)} V \big] - \E\big[\brevebM^{(2)}V\big] \big\| \leq \epsilon\big(W_2(\muhat_2,\SE_{\sx,k})\big)$.
    Because also $\| \brevebG^{(1)} - \brevebG^{(2)} \| \leq \epsilon\big(W_2(\muhat_2,\SE_{\sx,k})\big)$,
    repeated application of the Cauchy-Schwartz and triangle inequalities gives
    \begin{equation}
        \big\|
            \brevebG^{(1)\top} \E\big[\brevebM^{(1)}\brevebM^{(1)\top}\big]^\dagger \E\big[ \brevebM^{(1)} V \big]
            -
            \brevebG^{(2)\top} \E\big[\brevebM^{(2)}\brevebM^{(2)\top}\big]^\dagger \E\big[ \brevebM^{(2)} V \big]
        \big\|_{L_2}
        \leq \epsilon\big(W_2(\muhat_2,\SE_{\sx,k})\big).
    \end{equation}
    Representing, for $i \in \{1,2\}$,
    $\proj_{\brevebM^{(i)}}^\perp V = V - \brevebM^{(i)\top} \E\big[\brevebM^{(i)}\brevebM^{(i)\top}\big]^\dagger \E\big[\brevebM^{(i)}V\big] $,
    we can likewise conclude that $\big\| \proj_{\brevebM^{(1)}}^\perp V - \proj_{\brevebM^{(2)}}^\perp V \big\|_{L_2} \leq \epsilon\big(W_2(\muhat_2,\SE_{\sx,k})\big)$,
    whence in fact $\big| \big\| \proj_{\brevebM^{(1)}}^\perp V \|_{L_2} - \| \proj_{\brevebM^{(2)}}^\perp V \big\|_{L_2} \big| \leq \epsilon\big(W_2(\muhat_2,\SE_{\sx,k})\big)$.
    Because also $\| \Xi^{(1)} - \Xi^{(2)} \|_{L_2} \leq \epsilon\big(W_2(\muhat_2,\SE_{\sx,k})\big)$ and $\| \proj_{\brevebM^{(2)}}^\perp V \big\|_{L_2}  \leq 1$,
    we can conclude that 
    \begin{equation}
        \big\| 
            \| \proj_{\brevebM^{(1)}}^\perp V \|_{L_2} \Xi^{(1)}
            -
            \| \proj_{\brevebM^{(2)}}^\perp V \|_{L_2} \Xi^{(2)}
        \big\|_{L_2} \leq \epsilon\big(W_2(\muhat_2,\SE_{\sx,k})\big). 
    \end{equation}
    Combining with the previous displays yields the result.
\end{proof}

By Proposition \ref{prop:Z2-se}, 
as $n \rightarrow \infty$, 
\begin{equation}
    W_2\big(
        \muhat_{\bM,\bG,\bg^0,\bxi} ,\,
        \SE_{\sx,k}
    \big) 
    \pconv 0.
\end{equation}
Now consider $\bv \in \reals^n$, $\bu \in \reals^n$ with $\| \bv \| \leq 1$ and $\| \bu - \bbm^{k-1} \|/\sqrt{n} \leq \epsilon$,
and pick $\mu \in W_2(\reals \times (-1,1)^{k+1} \times \reals^{k+1})$ as in Lemma \ref{lem:cL-cont}.
Then,
for $(U,V,\brevebM,\brevebG,G_0,\Xi) \sim \mu$,
we have $\| V \|_{L_2} = \| \bv \| = 1$
and $\| U  - M_{k-1} \|_{L_2} \leq \| \bu - \bbm^{k-1} \| / \sqrt{n} + W_2(\muhat_2,\SE_{\sx,k}) \leq \epsilon +W_2(\muhat_2,\SE_{\sx,k}) $.
Thus,
$\mu \in \cS_{\sx,k}^{(1)}\big(\epsilon +W_2(\muhat_2,\SE_{\sx,k})\big)$.
Because also $\sF_{\sx,k}^{(1)}(\bv,\bu;\bM,\bG,\bxi) \leq \sF_{\sx,k}^{(1)}(\mu) + \epsilon(W_2(\muhat_2,\SE_{\sx,k}))$,
we conclude that 
\begin{equation}
    \sup_{\substack{ \| \bv \| = 1\\ 
                         \| \bu - \bbm^{k-1} \|/\sqrt{n} \leq \epsilon } }
            \sF_{\sx,k}(\bv,\bu;\bM,\bG,\bxi)
    \leq 
    \sup_{\mu \in \cS_{\sx,k}^{(1)}\big(\epsilon +W_2(\muhat_2,\SE_{\sx,k})\big)}
            \sF^{(1)}_{\sx,k}(\mu) + \epsilon(W_2(\muhat_2,\SE_{\sx,k})).
\end{equation} 
By Proposition \ref{prop:Z2-se}, $W_2(\muhat_2,\SE_{\sx,k}) \pconv =0$ as $n\rightarrow \infty$,
we get 
\begin{equation}
    \limsup_{n \rightarrow \infty}
        \sup_{\substack{ \| \bv \| = 1\\ 
                         \| \bu - \bbm^{k-1} \|/\sqrt{n} \leq \epsilon } }
            \sF_{\sx,k}(\bv,\bu;\bM,\bG,\bxi)
    \leq 
    \sup_{\mu \in \cS_{\sx,k}^{(1)}(2\epsilon)}
            \sF^{(1)}_{\sx,k}(\mu).
\end{equation}
Now taking $k \rightarrow \infty$ followed by $\epsilon \rightarrow 0$,
the result of the lemma follows.

\subsection{Proof of Theorem \ref{thm:local-strong-cvx}(ii)}
\label{app:unique-stat}

Note that for $\sx = \FMM$,
\begin{equation}
\begin{aligned}
    \nabla \cF_{\sx}(\bbm^{k-1})
        &=
        -\frac{\lambda}{n}\bY \bbm^{k-1} + \frac1n \tanh^{-1}(\bbm^{k-1})
        +\frac{\lambda^2}{n}(1 - Q(\bbm^{k-1}))\bbm^{k-1}
    \\
        &=
        -\frac1n(\bz^k - \bz^{k-1})
        + \frac{\lambda^2}{n}(1-Q(\bbm^{k-1}))(\bbm^{k-1} - \bbm^{k-2}).
\end{aligned}
\end{equation}
By Theorem \ref{thm:local-strong-cvx}(i),
there exists $c,c' > 0$ such that for any $\epsilon < c'$, there exists large enough $k$ such that, with probability approaching $1$ as $n \rightarrow \infty$,
the TAP free energy is $c/n$ strongly convex on $\{ \| \bu - \bbm^{k-1} \| / \sqrt{n} \leq \epsilon \} \cup [-1,1]^n$.
Thus, provided $\| \nabla \cF_{\sx}(\bbm^{k-1}) \| \leq c\epsilon/(4\sqrt{n}) $,
we get that $\cF_{\sx}$ has a unique stationary point $\bbm^*$ on $\{ \| \bu - \bbm^{k-1} \| / \sqrt{n} \leq \epsilon \} \cup [-1,1]^n$,
with $\| \bbm^* - \bbm^{k-1} \|/\sqrt{n} \leq (2n/c) \| \nabla \cF_{\sx}(\bbm^{k-1}) \| / \sqrt{n} \leq \epsilon / 2$.
Using that $\bz^s = \bg^s + \lambda (\< \bx , \bbm^{s-1} \> /n) \bbm^{s-1}$,
we have by Proposition \ref{prop:Z2-se} that
\begin{equation}
\begin{aligned}
    \plimsup_{n \rightarrow \infty} \frac1{\sqrt{n}} \| \bz^k - \bz^{k-1} \|
        &\leq
        \plim_{n\rightarrow \infty}
        \Big\{
            \frac1{\sqrt{n}} \| \bg^k - \bg^{k-1} \| 
            + 
            \frac{\lambda^2\|\bx\|}{\sqrt{n}} 
            \Big|
                \frac{\< \bx , \bbm^{k-1}\>}n
                -
                \frac{\< \bx , \bbm^{k-2}\>}n
            \Big|
        \Big\}
    \\
        &= 
        \sqrt{K_{k,k} + K_{k-1,k-1} - 2 K_{k,k-1}}
        + 
        |\gamma_k - \gamma_{k-1}|
        =
        \sqrt{\gamma_k - \gamma_{k-1}}
        +
        |\gamma_k - \gamma_{k-1}|.
\end{aligned}
\end{equation}
As $k \rightarrow \infty$, the right-hand side converges to 0.
Similarly, letting $q_k = \E[\tanh^2(\gamma_{k-1} + \sqrt{\gamma_{k-1}}G)] = (\gamma_k - \chi_{\sx}\gamma_0)/\lambda^2$,
we have
\begin{equation}
    \plim_{n \rightarrow \infty}(1-Q(\bbm^{k-1}))\frac{\|\bbm^{k-1} - \bbm^{k-2} \|}{\sqrt{n}}
        = 
        (1-q_k) \sqrt{q_k - q_{k-1}},
\end{equation}
which also goes to 0 as $k \rightarrow \infty$.
Thus, for sufficiently large $k$ depending on $\lambda,\gamma_0,c,\epsilon$,
we have $\| \nabla \cF_{\sx}(\bbm^{k-1})\| < c \epsilon/(4\sqrt{n})$ with probability going to 1 as $n \rightarrow \infty$. 
Thus, we confirmed that for sufficiently large $k$,
$\cF_{\sx}$ has a unique stationary point on $\{ \| \bu - \bbm^{k-1} \| / \sqrt{n} \leq \epsilon \} \cup [-1,1]^n$ with high probability.
To see that this stationary point must lie also in $(-1,1)^n$,
note that as $\bbm$ converges to the boundary of $[-1,1]^n$,
we have $\|\nabla \cF_{\sx}(\bbm)\|$  diverges to $\infty$, because $\| \tanh^{-1}(\bbm)\| $ diverges to $\infty$ and the other terms in $\nabla \cF_{\sx}(\bbm)$ remain bounded.

The argument for $\sx = \AMS$ is nearly equivalent. 
Now we have 
\begin{equation}
\begin{aligned}
    \nabla \cF_{\sx}(\bbm^{k-1})
        &=
        -\frac{\lambda}{n}\bY \bbm^{k-1} + \frac1n \tanh^{-1}(\bbm^{k-1})
        + \frac1n\by
        +\frac{\lambda^2}{n}(1 - q_\infty)\bbm^{k-1}
    \\
        &=
        -\frac1n(\bz^k - \bz^{k-1})
        + \frac{\lambda^2}{n}(Q(\bbm^{k-1}) - q_\infty) \bbm^{k-1}
        + \frac{\lambda^2}{n}(1-Q(\bbm^{k-1}))(\bbm^{k-1}-\bbm^{k-2}).
\end{aligned}
\end{equation}
The argument then proceeds exactly as before, 
except we must now deal also with the term $(\lambda^2/n)(Q(\bbm^{k-1})-q_\infty)\bbm^{k-1}$,
which we must also show satisfies
\begin{equation}
    \lim_{k \rightarrow \infty} \plim_{n \rightarrow \infty} (\lambda^2/n)(Q(\bbm^{k-1})-q_\infty)\|\bbm^{k-1}\| = 0.
\end{equation}
Because $\plim_{n \rightarrow \infty} Q(\bbm^{k-1}) = q_k$, 
and $q_k \rightarrow q_\infty$ as $k \rightarrow \infty$,
the previous display holds, and the result follows.

\end{document}